\documentclass[a4paper,10pt]{amsart}

\usepackage[
  margin=30mm,
  marginparwidth=25mm,     
  marginparsep=2mm,       
  bottom=25mm,
  ]{geometry}

\usepackage[bbgreekl]{mathbbol}
\usepackage{amsfonts}
\usepackage{latexsym,amssymb,amsthm,mathrsfs,amsmath,amscd,enumerate,enumitem, amscd,color}

\usepackage{tikz}

\DeclareSymbolFontAlphabet{\mathbb}{AMSb}
\DeclareSymbolFontAlphabet{\mathbbl}{bbold}


\newtheorem{ThA}{Theorem}

\newtheorem{thm}{Theorem}[section]
 \newtheorem{cor}[thm]{Corollary}
 \newtheorem{lem}[thm]{Lemma}
 \newtheorem{prop}[thm]{Proposition}
\theoremstyle{definition}

 \theoremstyle{remark}

\usepackage{hyperref}


\newcommand{\N}{\mathbb{N}}

\newcommand{\R}{\mathbb{R}}
\newcommand{\Rn}{\mathbb{R}^n}
\newcommand{\supp}{\mathop{\mathrm{supp}}}

\newcommand{\eps}{\varepsilon}

\numberwithin{equation}{section}
\allowdisplaybreaks

\begin{document}

\footnotetext{Last modification: \today.}

\title[]
 {BMO functions and balayage of Carleson measures in the Bessel setting}

\author[V. Almeida]{V. Almeida}
\address{V\'ictor Almeida, Jorge J. Betancor, Juan C. Fari\~na, Lourdes Rodr\'iguez-Mesa\newline
	Departamento de An\'alisis Matem\'atico, Universidad de La Laguna,\newline
	Campus de Anchieta, Avda. Astrof\'isico S\'anchez, s/n,\newline
	38721 La Laguna (Sta. Cruz de Tenerife), Spain}
\email{valmeida@ull.es,
jbetanco@ull.es, 
jcfarina@ull.es,
lrguez@ull.es}

\author[J. J. Betancor]{J. J. Betancor}

\author[A.J. Castro]{A. J. Castro}
\address{Alejandro J. Castro\newline
	Department of Mathematics, Nazarbayev University, \newline 010000 Astana, Kazakhstan}
\email{alejandro.castilla@nu.edu.kz}

\author[J.C. Fari\~na]{J. C. Fari\~na}

\author[L. Rodr\'{\i}guez-Mesa] {L. Rodr\'{\i}guez-Mesa}

\thanks{The authors are partially supported by MTM2016-79436-P}

\subjclass[2010]{30H35, 35J15, 42B35, 42B37, 42C05}

\keywords{Bessel operators, BMO functions, Carleson measure, balayage.}

\date{}


\begin{abstract}
By $BMO_{\textrm{o}}(\mathbb{R})$ we denote the space consisting of all those odd and bounded mean oscillation functions on $\mathbb{R}$. In this paper we characterize the functions in $BMO_{\textrm{o}}(\mathbb{R})$ with bounded support as those ones that can be written as a sum of a bounded function on $(0,\infty )$ plus the balayage of a Carleson measure on $(0,\infty )\times (0,\infty)$ with respect to the Poisson semigroup associated with the Bessel operator
$$
B_\lambda 
:=-x^{-\lambda }\frac{d}{dx}x^{2\lambda }\frac{d}{dx}x^{-\lambda },\quad \lambda >0.
$$
This result can be seen as an extension to Bessel setting of a classical result due to Carleson.
\end{abstract}

\maketitle

\section{Introduction}

In this paper we extend \cite[Theorem 2]{Ca} (See also \cite[Theorem A]{Wi}) due to Carleson to Bessel settings.

A measurable function $f$ on $\Rn$ is said to have bounded mean oscillation, in short $f \in BMO(\Rn)$, when there exists $C>0$ such that, for every cube $Q \subset \Rn$ with sides parallel to the coordinate axes,
$$\frac{1}{|Q|} \int_Q |f(y)-f_Q| dy
\leq C,$$
where 
$$f_Q
:= \frac{1}{|Q|} \int_Q f(y)dy.$$
It is defined, for every $f \in BMO(\Rn)$,
$$\|f\|_{BMO(\Rn)}
:= \sup_{Q} \frac{1}{|Q|} \int_Q |f(y)-f_Q| dy,$$
where the supremum is taken over all the cubes $Q \subset \Rn$ with sides parallel to the coordinate axis. It is clear that $\| \cdot \|_{BMO(\Rn)}$ is a norm when two functions $f$ and $g$ in $BMO(\Rn)$ are identified provided that the difference $f-g$ is constant in $\Rn$. $BMO(\Rn)$ is also called the John-Nirenberg space (\cite{JN}).

The $BMO(\Rn)$ space is closely connected with the so called Carleson measure in $\R_+^{n+1}:=\Rn \times (0,\infty)$. If $Q$ is a cube in $\Rn$, the Carleson box is given by $\widehat{Q}:=Q \times (0,|Q|)$, where $|Q|$ denotes the Lebesgue measure of $Q$. A Borel measure $\mu$ on $\R_+^{n+1}$ is said to be a Carleson measure, in short $\mu \in \mathcal{C}(\R_+^{n+1})$, when there exists $C>0$ such that, for every cube $Q \subset \Rn$ with sides parallel to the coordinate axes,
$$\frac{|\mu|(\widehat{Q})}{|Q|}
\leq C,$$
where $|\mu|$ denotes the total variation measure of $\mu$.

If $\mu \in \mathcal{C}(\R_+^{n+1})$ it can be defined the norm
$$\|\mu\|_{\mathcal{C}(\R_+^{n+1})}
:= \sup_{Q} \frac{|\mu|(\widehat{Q})}{|Q|},$$
where the supremum is taken  over all cubes $Q \subset \Rn$ with sides parallel to the coordinate axes.

The classical Poisson semigroup $\{P_t\}_{t>0}$ generated by $-\sqrt{- \Delta}$, where $\Delta$ represents the Laplace operator $\Delta := \sum_{i=1}^n \partial_{x_i}^2$ in $\Rn$, is defined for every $f \in L^p(\Rn)$, $1 \leq p \leq \infty$, by
$$P_t(f)(x)
:= c_n \int_{\Rn} P_t(x-y) f(y) dy, \quad x \in \Rn, \ t>0,$$
where $c_n:=\Gamma((n+1)/2)/\pi^{(n+1)/2}$. Here the Poisson kernel is 
$$P_t(x)
:= \frac{t}{(|x|^2+t^2)^{(n+1)/2}}, \quad x \in \Rn, \ t>0.$$

If $f \in BMO(\Rn)$, then (\cite[p. 141]{St})
\begin{equation}\label{eq1}
\int_{\Rn} \frac{|f(y)|}{(1+|y|)^{n+1}} dy < \infty,
\end{equation}
and $P_t(|f|)(x) < \infty$, for every $x \in \Rn$ and $t>0$. 

It is well known (\cite[p. 159 and p. 165]{St}) that a function $f \in L^1(\Rn,(1+|y|)^{-n-1} dy)$ is in $BMO(\Rn)$ if, and only if, the measure $\mu_f$ on $\R_+^{n+1}$ defined by
$$d\mu_f(x,t)
:= | t \partial_t P_t(f)(x) |^2 \frac{dx dt}{t}, \quad (x,t) \in \R_+^{n+1},$$
is a Carleson measure.

If $\mu$ is a positive measure on $\R^{n+1}_+$ the balayage $S_{\mu,P}$ with respect to the Poisson semigroup $\{P_t\}_{t>0}$ is defined by
$$S_{\mu,P}(x)
:= \int_{\R_+^{n+1}} P_t(x-y) d\mu(y,t), \quad x \in \Rn.$$

Carleson (\cite[Theorem 2]{Ca}) (see also \cite[Theorem A]{Wi}) proved that a function $f$ with compact support is in $BMO(\Rn)$  if, and only if, there exist $g \in L^\infty(\Rn)$ and a Carleson measure $\mu$ on $\R_+^{n+1}$ such that $f=g+S_{\mu,P}$ and 
$$\|f\|_{BMO(\Rn)}
\sim \|g\|_{L^\infty(\Rn)} + \|\mu\|_{\mathcal{C}(\R_+^{n+1})}.$$
Actually, this result was established for uniparametric families $\{K_t\}_{t>0}$ being the Poisson semigroup $\{P_t\}_{t>0}$ a special case. An extension of \cite[Theorem 2]{Ca} to spaces of homogeneous type was proved by Uchiyama (\cite{Uc}). The proofs of the mentioned results in \cite{Ca} and \cite{Uc} (see also \cite{GJ}) are based on an iterative argument. Other proof was presented in \cite{Wi}. Here, we will adapt Wilson's ideas to our setting.

Recently, Chen, Duong, Li, Song and Yan (\cite{CDLSY}) have established a version of Carleson's result (\cite[Theorem 2]{Ca}) where the Laplace operator $\Delta$ is replaced by the Schr\"odinger operator $\mathcal{L}_V:=-\Delta + V$, where the nonnegative potential $V$ belongs to the reverse H\"older class $B_q$ for some $q \geq n$. Definitions and main properties about $BMO$ spaces associated with $\mathcal{L}_V$ can be encountered in \cite{DYZ} and \cite{DGMTZ} (see also \cite{Sh}).

Harmonic analysis associated with Bessel operators was initiated by Muckenhoupt and Stein (\cite{MS}). They considered the Bessel operators
$$\Delta_\lambda
:= -x^{-2\lambda} \frac{d}{dx} x^{2\lambda} \frac{d}{dx}, \quad \lambda >0,$$
and studied $L^p$-boundedness properties of maximal operators associated with Poisson semigroups defined by $\Delta_\lambda$ and Riesz transforms in this setting. Recently, harmonic analysis related to Bessel operators has raised interest again (see \cite{BCS},
\cite{DLOWY}, 
\cite{DLWY}, 
\cite{LW},
\cite{NS1}, 
\cite{NS2}, 
\cite{Vi}
and \cite{YY},
 among others).

We consider the Bessel type operator on $(0,\infty)$
$$B_\lambda
:= - x^{-\lambda} \frac{d}{dx} x^{2\lambda} \frac{d}{dx} x^{-\lambda}
= - \frac{d^2}{dx^2} + \frac{\lambda (\lambda-1)}{x^2}, \quad \lambda >0.$$
Note that the potential $V_\lambda(x):=\lambda(\lambda-1)/x^2$, $x \in (0,\infty)$, does not satisfy any reverse H\"older property and it has a singularity at $x=0$. Then, $B_\lambda$ is not included in the class of Schr\"odinger operators considered in \cite{CDLSY} and  \cite{DGMTZ}.

Assume that $\lambda>0$. According to \cite[\S 16]{MS} the Poisson semigroup $\{P_t^\lambda\}_{t>0}$ associated with the Bessel operator $B_\lambda$ is defined as follows
$$P_t^\lambda(f)(x)
:= \int_0^\infty P_t^\lambda(x,y)f(y) dy, \quad x,t \in (0,\infty),$$
for every $f \in L^p(0,\infty)$, $1 \leq p \leq \infty$. Here, the Poisson kernel $P_t^\lambda(x,y)$ is given by
$$P_t^\lambda(x,y)
:= \frac{2\lambda}{\pi} (xy)^\lambda t \int_0^\pi \frac{(\sin \theta)^{2\lambda-1}}{[(x-y)^2+t^2+2xy(1-\cos \theta)]^{\lambda+1}} d\theta,
\quad x,y,t \in (0,\infty).$$
For every $1 \leq p \leq \infty$, the family $\{P_t^\lambda\}_{t>0}$ is bounded on $L^p(0,\infty)$. Note that $\{P_t^\lambda\}_{t>0}$ is not Markovian, that is, $P_t^\lambda(1) \neq 1$. Indeed, according to \cite[Lemma 2.2 and Remark 2.5]{BSt} the function $v(x,t):=P_t^\lambda(1)(x)$ satisfies
$$(\partial_t^2 - B_{\lambda,x})v(x,t)=0, \quad x,t \in (0,\infty),$$
but clearly 
$$(\partial_t^2 - B_{\lambda,x})1 = -\frac{\lambda (\lambda-1)}{x^2}, \quad x,t \in (0,\infty).$$ 
We also remark that the function ${\mathfrak f}_\lambda(x)=x^\lambda$, $x \in (0,\infty)$, does not belong to $L^p(0,\infty)$, for any $1 \leq p \leq \infty$. However, $P_t^\lambda({\mathfrak f}_\lambda)={\mathfrak f}_\lambda$, $t>0$ (see, \cite[p. 455]{BCaFR}).  

We denote by $P_*^\lambda$ the maximal operator defined by $\{P_t^\lambda\}_{t>0}$, that is, 
$$P_*^\lambda(f)
:= \sup_{t>0} |P_t^\lambda(f)|, \quad f \in L^p(0,\infty), \quad 1 \leq p \leq \infty.$$
$P_*^\lambda$ is bounded from $L^p(0,\infty)$ into itself when $1<p \leq \infty$ and from $L^1(0,\infty)$ into $L^{1,\infty}(0,\infty)$ (\cite[Theorem 2.4 and Remark 2.5]{BSt}).

The Hardy space $H^1_\lambda(0,\infty)$ associated to the operator $B_\lambda$ was studied in \cite{BDT}. It is said that a function $f \in L^1(0,\infty)$ is in $H^1_\lambda(0,\infty)$ when $P_*^\lambda(f) \in L^1(0,\infty)$. On $H^1_\lambda(0,\infty)$ it is considered the norm $\|\cdot \|_{H^1_\lambda(0,\infty)}$ given by
$$\|f \|_{H^1_\lambda(0,\infty)}
:= \|f \|_{L^1(0,\infty)} + \|P_*^\lambda(f) \|_{L^1(0,\infty)}, \quad f \in H^1_\lambda(0,\infty).$$
The dual space of $H^1_\lambda(0,\infty)$ can be characterized as a $BMO$-type space. A function $f \in L^1(0,a)$, for every $a>0$, is in $BMO_{\textrm{o}}(\mathbb{R})$ when there exists $C>0$ such that
\begin{itemize}
\item[$a)$] for every bounded interval $I \subset (0,\infty)$,
$$\frac{1}{|I|} \int_I |f(y)-f_I| dy \leq C,$$
\item[$b)$] for every $a \in (0,\infty)$, 
$$\frac{1}{a} \int_0^a |f(y)| dy \leq C.$$
\end{itemize}
On $BMO_{\textrm{o}}(\mathbb{R})$ the norm $\|\cdot \|_{BMO_{\textrm{o}}(\mathbb{R})}$ is defined by
$$
\| f \|_{BMO_{\textrm{o}}(\mathbb{R})}
:= \inf \{C>0 \text{ :  {\it a}) and {\it b}) hold}\}.
$$
The space $BMO_{\textrm{o}}(\mathbb{R})$ can be characterized as that one consisting on all the functions $f$ defined on $(0,\infty)$ such that the odd extension $f_{\textrm{o}}$ of $f$ to $\R$ is in $BMO(\R)$ (\cite[p.465]{BCFR1}). This property, that justifies the notation $BMO_{\textrm{o}}(\mathbb{R})$ for our space, will be very useful in the sequel. The space $BMO_{\textrm{o}}(\mathbb{R})$ coincides, in the usual way, with the dual space of $H^1_\lambda(0,\infty)$ (see \cite[p. 466]{BCFR1}).

We say that a Borel measure $\mu$ on $(0,\infty)\times (0,\infty )$ is a Carleson measure on $(0,\infty)\times (0,\infty )$ when there exists $C>0$ such that, for every bounded interval $I \subset (0,\infty)$,
$$\frac{|\mu|(\widehat{I})}{|I|} \leq C.$$
Here, as above, $|\mu|$ represents the total variation measure of $\mu$, $|I|$ denotes the length of the interval $I$ and $\widehat{I}:=I \times (0,|I|)$. If $\mu$ is a Carleson measure on $(0,\infty)\times (0,\infty )$ we define
$$\|\mu\|_{\mathcal{C}}
:= \sup_{I} \frac{|\mu|(\widehat{I})}{|I|},$$
where the supremum is taken over all bounded intervals $I \subset (0,\infty)$.

Next result shows the connection between $BMO_{\textrm{o}}(\mathbb{R})$ and the Carleson measures on $(0,\infty)\times (0,\infty )$ by using Poisson semigroups $\{P_t^\lambda\}_{t>0}$.

\begin{ThA}\label{Th1.1} {\normalfont (\cite[Theorem 1.1]{BCFR1})}
Let $\lambda >0$. Suppose that $f \in L^1(0,a)$, for every $a>0$. Then, $f \in BMO_{\textrm{o}}(\mathbb{R})$ if, and only if, $f \in L^1((0,\infty),(1+x)^{-2} dx)$ and the measure $\mu_f$ on $(0,\infty)\times (0,\infty )$ defined by
$$\mu_f(x,t)
:= | t \partial_t P_t^\lambda(f)(x) |^2 \frac{dxdt}{t}, \quad x,t \in (0,\infty), $$
is Carleson. Moreover, the quantities $\|f\|_{BMO_{\textrm{o}}(\mathbb{R})}^2$ and $\|\mu_f\|_{\mathcal{C}}$ are equivalent.
\end{ThA}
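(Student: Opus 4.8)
The plan is to combine the classical Carleson--Fefferman--Stein theory on $\mathbb R$ with the two structural facts emphasised above: that $f\in BMO_{\textrm o}(\mathbb R)$ exactly when its odd extension $f_{\textrm o}$ belongs to $BMO(\mathbb R)$, and that $BMO_{\textrm o}(\mathbb R)$ is the dual of $H^1_\lambda(0,\infty)$. The weighted integrability is immediate in one direction: if $f\in BMO_{\textrm o}(\mathbb R)$ then $f_{\textrm o}\in BMO(\mathbb R)$ and \eqref{eq1} with $n=1$ yields $\int_0^\infty|f(x)|(1+x)^{-2}\,dx<\infty$; conversely this integrability, together with the size estimates $P_t^\lambda(x,y)\lesssim t/((x-y)^2+t^2)$ and $|\partial_tP_t^\lambda(x,y)|\lesssim 1/((x-y)^2+t^2)$ for the Bessel Poisson kernel --- extracted from the explicit integral representation by the change of variables $u=\theta\sqrt{xy}$ --- guarantees that $P_t^\lambda(f)(x)$ and $t\partial_tP_t^\lambda(f)(x)$ are well defined. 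So the content is the equivalence, for such $f$, between $f\in BMO_{\textrm o}(\mathbb R)$ and the Carleson property of $\mu_f$, together with $\|f\|_{BMO_{\textrm o}(\mathbb R)}^2\sim\|\mu_f\|_{\mathcal C}$.

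For the implication $f\in BMO_{\textrm o}(\mathbb R)\Rightarrow\mu_f\in\mathcal C$, fix a bounded interval $I=(a,b)\subset(0,\infty)$ of length $r$ and estimate $\iint_{\widehat I}|t\partial_tP_t^\lambda(f)(x)|^2\,\frac{dx\,dt}{t}$, decomposing $f$ adapted to $I$ and distinguishing whether $I$ is close to the origin ($a\lesssim r$) or far from it. In the first case write $f=f\chi_{(0,4r)}+f\chi_{(4r,\infty)}$: for the local piece use the Littlewood--Paley identity $\iint_{(0,\infty)^2}|t\partial_tP_t^\lambda h|^2\frac{dx\,dt}{t}=c_\lambda\|h\|_{L^2(0,\infty)}^2$ (valid on $L^2$ by the spectral theorem for $B_\lambda$) and the bound $\int_0^{4r}|f|^2\lesssim r\,\|f\|_{BMO_{\textrm o}(\mathbb R)}^2$, which follows from the John--Nirenberg inequality on dyadic subintervals of $(0,4r)$ and from hypothesis $b)$ in the definition of $BMO_{\textrm o}(\mathbb R)$ controlling every average $\rho^{-1}\int_0^\rho|f|$; for the far piece the kernel estimate gives $|t\partial_tP_t^\lambda(f\chi_{(4r,\infty)})(x)|\lesssim t\int_{4r}^\infty|f(y)|y^{-2}\,dy\lesssim (t/r)\|f\|_{BMO_{\textrm o}(\mathbb R)}$ on $\widehat I$ (again by $b)$), and $\iint_{\widehat I}(t/r)^2\frac{dx\,dt}{t}\lesssim|I|$. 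In the second case subtract an average, $f=(f-f_{I^*})\chi_{I^*}+(f-f_{I^*})\chi_{(I^*)^c}+f_{I^*}$ with $I^*$ a fixed dilate of $I$ still contained in $(0,\infty)$; the first two pieces are handled exactly as in the classical Fefferman--Stein argument (John--Nirenberg plus the kernel estimate), while for the constant piece one writes $P_t^\lambda(f_{I^*})=f_{I^*}\,P_t^\lambda(1)$ and uses a sharp estimate of the form $|t\partial_tP_t^\lambda(1)(x)|\lesssim\min\{1,(t/x)^{\delta}\}$ for some $\delta>0$ --- this is where the non-Markovian character of $\{P_t^\lambda\}$ enters, and it can be obtained by comparison with $\frac{2}{\pi}\arctan(x/t)=\int_0^\infty[P_t(x-y)-P_t(x+y)]\,dy$ --- so that on $\widehat I$ the resulting factor $(r/a)^{2\delta}$ overcomes the at most logarithmic growth of $|f_{I^*}|$ (estimated using that the average of $f_{\textrm o}$ over $(-\rho,\rho)$ vanishes for all $\rho>0$, together with $b)$), giving $\iint_{\widehat I}|t\partial_tP_t^\lambda(f_{I^*})|^2\frac{dx\,dt}{t}\lesssim|I|\,\|f\|_{BMO_{\textrm o}(\mathbb R)}^2$.

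For the converse, argue by duality using $(H^1_\lambda(0,\infty))^*=BMO_{\textrm o}(\mathbb R)$: it suffices to prove that if $\mu_f\in\mathcal C$ then $|\int_0^\infty f(x)g(x)\,dx|\lesssim\|\mu_f\|_{\mathcal C}^{1/2}\,\|g\|_{H^1_\lambda(0,\infty)}$ for $g$ in a dense class, say finite linear combinations of $H^1_\lambda$-atoms. Starting from the polarised Calder\'on reproducing formula $\int_0^\infty fg\,dx=c_\lambda\iint_{(0,\infty)^2}t\partial_tP_t^\lambda(f)(x)\,t\partial_tP_t^\lambda(g)(x)\,\frac{dx\,dt}{t}$ --- valid on $L^2(0,\infty)$ by the spectral theorem and extended to the present setting by a limiting argument $f_N:=f\chi_{(1/N,N)}\to f$ whose legitimacy rests on the kernel estimates and on the absolute convergence of the right-hand side, itself a consequence of the Carleson hypothesis --- one invokes the tent-space duality of Coifman--Meyer--Stein,
$$\iint_{(0,\infty)^2}|F(x,t)|\,|H(x,t)|\,\frac{dx\,dt}{t}\;\lesssim\;\|\mathfrak C(F)\|_{L^\infty(0,\infty)}\,\|\mathfrak A(H)\|_{L^1(0,\infty)},$$
with $\mathfrak C(F)(x)=\sup_{I\ni x}(|I|^{-1}\iint_{\widehat I}|F|^2\frac{dx\,dt}{t})^{1/2}$ and $\mathfrak A(H)$ the conical area integral. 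Taking $F=t\partial_tP_t^\lambda(f)$ gives $\|\mathfrak C(F)\|_{L^\infty}^2=\|\mu_f\|_{\mathcal C}$, and taking $H=t\partial_tP_t^\lambda(g)$ makes $\|\mathfrak A(H)\|_{L^1}$ the $L^1$-norm of the Poisson area integral of $g$, comparable to $\|g\|_{H^1_\lambda(0,\infty)}$ by the (known) area-integral characterisation of this Hardy space. Hence $g\mapsto\int_0^\infty fg$ extends to a bounded functional on $H^1_\lambda(0,\infty)$ of norm $\lesssim\|\mu_f\|_{\mathcal C}^{1/2}$; since the duality pairing is integration, $f$ itself lies in $BMO_{\textrm o}(\mathbb R)$ with $\|f\|_{BMO_{\textrm o}(\mathbb R)}\lesssim\|\mu_f\|_{\mathcal C}^{1/2}$.

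The genuine obstacle, and the point where the argument departs from the Euclidean one, is the failure of $P_t^\lambda(1)=1$: classically the constant part of $f$ contributes nothing to $t\partial_tP_tf$, whereas here it must be reabsorbed through the sharp decay of $t\partial_tP_t^\lambda(1)$, balanced against the logarithmic size of the averages of a $BMO_{\textrm o}$ function far from the origin. Establishing the precise pointwise bounds for $P_t^\lambda(x,y)$, $\partial_tP_t^\lambda(x,y)$ and $t\partial_tP_t^\lambda(1)(x)$, proving the Littlewood--Paley and Calder\'on identities with convergence valid for $f$ only in $L^1((0,\infty),(1+x)^{-2}\,dx)$, and checking the convergence of the various integrals over truncated tents, form the technical core. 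A cleaner, though equally kernel-intensive, route to the first implication is to prove a pointwise decomposition $P_t^\lambda(x,y)=c_1\,[P_t(x-y)-P_t(x+y)]+R_t^\lambda(x,y)$ in which the remainder carries enough extra decay that the measure $|t\partial_t\int_0^\infty R_t^\lambda(x,y)f(y)\,dy|^2\frac{dx\,dt}{t}$ is Carleson on $(0,\infty)\times(0,\infty)$ whenever $f\in BMO_{\textrm o}(\mathbb R)$; since $c_1[P_t(x-y)-P_t(x+y)]$ is the classical Poisson kernel acting on the odd extension, Stein's result applied to $f_{\textrm o}$ then yields the Carleson property of $\mu_f$.
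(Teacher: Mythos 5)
This statement is Theorem~A, which the paper does not prove: it is quoted verbatim from \cite[Theorem 1.1]{BCFR1}, so there is no internal proof to compare against line by line. Judged on its own merits, and against the machinery the paper does develop, your outline is essentially sound and follows the expected route. Your converse direction (Carleson $\Rightarrow$ $BMO_{\textrm{o}}$) is precisely the tent-space duality argument that the paper itself runs in the proof of Proposition~\ref{Prop2.1}$(i)$ for the parallel space $BMO(P^\lambda)$: the pairing of the Carleson functional $\Phi$ against the conical functional $\Psi$ is \cite[Proposition 4.3]{BCFR1}, the area-integral characterisation of $H^1_\lambda(0,\infty)$ is \cite[Proposition 4.1]{BCFR1}, and the identification of the bilinear integral with a multiple of $\int_0^\infty fb$ for atoms $b$ is \cite[(4.8) and (4.15)]{BCFR1}. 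Your forward direction, including the key quantitative claim $|t\partial_tP_t^\lambda(1)(x)|\lesssim\min\{1,(t/x)^\delta\}$, is verifiable from the kernel estimates the paper records (Lemma~\ref{D4}, \eqref{D2}, \eqref{D3}, and the $J_1,\dots,J_4$ analysis in Proposition~\ref{Prop2.2}, which yields the decay with a harmless logarithmic loss, so any $\delta<1$ works); the balance of $(r/x_I)^{2\delta}$ against the logarithmic growth of $|f_{I^*}|$ then closes that case correctly, and your alternative route via the decomposition $P_t^\lambda(x,y)=[P_t(x-y)-P_t(x+y)]+R_t^\lambda(x,y)$ is exactly the comparison scheme of \eqref{BesselClassical}.

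The one step you should not leave at the level of an assertion is the extension of the polarised Calder\'on identity from $L^2(0,\infty)$ to $f$ known only to lie in $L^1((0,\infty),(1+x)^{-2}dx)$ with $\mu_f$ Carleson. The truncation $f_N=f\chi_{(1/N,N)}$ as you state it does not obviously come with a dominating function for the right-hand side, since $\mu_{|f|}$ and $\mu_{f_N}$ need not be uniformly Carleson; the standard repair (used in \cite{BCFR1} and mirrored in the paper's proof of Proposition~\ref{Prop2.1}$(i)$) is to fix an atom $b$, use the decay of $t\partial_tP_t^\lambda(b)$ to justify Fubini directly against the kernel, and compute the inner limit explicitly in $L^2$, after which the absolute convergence supplied by the $\Phi$--$\Psi$ pairing legitimises the exchange of limits. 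With that point repaired, your argument is a correct reconstruction of the cited theorem.
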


In our $B_\lambda$-Bessel setting we consider the gradient 
$\nabla_{\lambda}
:=(\partial_t,D_{\lambda ,x})$, where $D_{\lambda ,x}:=x^\lambda \partial_x x^{-\lambda}$  .

\begin{ThA}\label{Th1.2} {\normalfont (\cite[Theorem 1]{BCaFR})}
Let $\lambda >1$. Assume that $u$ is a function defined in $\R \times (0,\infty)$ such that
$x^{-\lambda} u(x,t) \in C^\infty(\R \times (0,\infty))$ and it is even in the $x$-variable. Suppose also that $(\partial_t^2 - B_\lambda)u=0$, on $(0,\infty)\times (0,\infty )$. Then, the following assertions are equivalent.
\begin{itemize}
\item[$(i)$] There exists $f \in BMO_{\textrm{o}}(\mathbb{R})$ such that 
$u(x,t)=P_t^{\lambda}(f)(x)$, $x,t \in (0,\infty)$.
\item[$(ii)$] The measure $\mu_\lambda$ on $(0,\infty)\times (0,\infty )$ defined by
$$d\mu_\lambda(x,t)
:= | t \nabla_\lambda u(x,t) |^2 \frac{dxdt}{t}, \quad x,t \in (0,\infty), $$
is Carleson.
\end{itemize}
Moreover, the quantities $\|f\|_{BMO_{\textrm{o}}(\mathbb{R})}^2$ and $\|\mu_\lambda\|_{\mathcal{C}}$ are equivalent.
\end{ThA}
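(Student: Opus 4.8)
The plan is to take Theorem \ref{Th1.1} as the engine and to add only the two pieces of information that it does not already contain. In the direction $(i)\Rightarrow(ii)$, if $u=P_t^\lambda(f)$ with $f\in BMO_{\textrm{o}}(\mathbb R)$, then $d\mu_\lambda=|t\partial_tP_t^\lambda(f)|^2\frac{dxdt}{t}+|tD_{\lambda,x}P_t^\lambda(f)|^2\frac{dxdt}{t}=:d\mu_f+d\nu_f$, and Theorem \ref{Th1.1} already gives $\mu_f\in\mathcal C$ with $\|\mu_f\|_{\mathcal C}\sim\|f\|_{BMO_{\textrm{o}}(\mathbb R)}^2$; so the only new point is $\nu_f\in\mathcal C$ with $\|\nu_f\|_{\mathcal C}\lesssim\|f\|_{BMO_{\textrm{o}}(\mathbb R)}^2$. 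In the direction $(ii)\Rightarrow(i)$, once one knows that $u=P_t^\lambda(f)$ for some $f\in L^1((0,\infty),(1+x)^{-2}dx)$, the bound $\mu_f\leq\mu_\lambda$ and Theorem \ref{Th1.1} immediately yield $f\in BMO_{\textrm{o}}(\mathbb R)$ with $\|f\|_{BMO_{\textrm{o}}(\mathbb R)}^2\lesssim\|\mu_\lambda\|_{\mathcal C}$; and since $\mu_\lambda\geq\mu_f$ as positive measures, $\|\mu_\lambda\|_{\mathcal C}\geq\|\mu_f\|_{\mathcal C}\sim\|f\|_{BMO_{\textrm{o}}(\mathbb R)}^2$, so the two quantities are comparable as soon as $(i)\Rightarrow(ii)$ is proved. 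Thus everything reduces to: (A) the Carleson estimate for $\nu_f$; and (B) recognizing $u$ as a Bessel--Poisson integral of a suitable function.

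For (A) I would estimate the kernel $D_{\lambda,x}P_t^\lambda(x,y)$ directly. Differentiating the integral representation of $P_t^\lambda(x,y)$ one obtains a Poisson-type size bound together with a H\"older estimate in $y$; besides terms that behave like $\partial_xP_t(x-y)$ in the Euclidean case, the differentiation produces an extra term carrying a factor $1/x$, and it is precisely this term that must be controlled by hypothesis $b)$ in the definition of $BMO_{\textrm{o}}(\mathbb R)$ when the base interval $I$ lies near the origin. With the kernel bounds in hand, the estimate of $\nu_f(\widehat I)$ is the usual Fefferman--Stein splitting adapted to the Bessel setting: write $f=f_{I^*}+(f-f_{I^*})\mathbf 1_{I^*}+(f-f_{I^*})\mathbf 1_{(0,\infty)\setminus I^*}$ for a fixed dilate $I^*$ of $I$, bound the first contribution via $b)$, the second via the $L^2$ boundedness of the associated square function together with the John--Nirenberg inequality (applied to the odd extension $f_{\textrm{o}}\in BMO(\mathbb R)$), and the third via the $y$-smoothness of the kernel and the standard $BMO$ tail estimate. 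Alternatively, since $D_{\lambda,x}$ and its adjoint factor $B_\lambda$, one can introduce the conjugate Poisson semigroup $Q_t^\lambda$, use the Cauchy--Riemann identity $D_{\lambda,x}P_t^\lambda(f)=\partial_tQ_t^\lambda(f)$ and observe that $Q_t^\lambda(f)$ solves the Bessel--Laplace equation for $B_{\lambda+1}$, reducing (A) to a vertical square-function estimate of the type of Theorem \ref{Th1.1} for that operator.

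For (B), the first step is an a priori pointwise control of $u$. Since $\partial_tu$ and $D_{\lambda,x}u$ again solve $(\partial_t^2-B_\lambda)(\cdot)=0$, interior sub-mean-value estimates for that equation — valid for $x>t$, and extended across the boundary $x=0$ using the hypothesis that $x^{-\lambda}u$ be even and smooth, which near $x=0$ turns the problem into one for the non-singular operator $\Delta_\lambda$ acting on even functions — give $|t\partial_tu(x,t)|+|tD_{\lambda,x}u(x,t)|\lesssim\|\mu_\lambda\|_{\mathcal C}^{1/2}$ pointwise. Integrating $\partial_tu$ in $t$ and $\partial_x(x^{\pm\lambda}u)=x^{\pm\lambda}D_{\lambda,x}u$ in $x$, and using $\lambda>1$ to make the relevant power of $x$ integrable at infinity, one obtains the growth and integrability control needed to ensure that the prospective boundary function lies in $L^1((0,\infty),(1+x)^{-2}dx)$. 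The second step is the reproduction identity $u(x,t+s)=P_t^\lambda(u(\cdot,s))(x)$ for $x,t,s>0$, obtained from a uniqueness theorem for the Bessel--Laplace equation within the class of functions with the given regularity, evenness and growth. Combining the two steps, the family $\{u(\cdot,s)\}_{0<s\le1}$ has Carleson measures $\mu_{u(\cdot,s)}$ with constants bounded uniformly in $s$ by $C\|\mu_\lambda\|_{\mathcal C}$ — indeed, by the reproduction identity $\mu_{u(\cdot,s)}$ is dominated by a downward vertical shift of $\mu_\lambda$, and the uniformity over all scales, including scales below $s$, follows from the pointwise bound of the first step — hence by Theorem \ref{Th1.1} the family is bounded in $BMO_{\textrm{o}}(\mathbb R)=(H^1_\lambda(0,\infty))^{*}$; extracting a weak-$*$ convergent subsequence $u(\cdot,s_k)\to f$ as $s_k\to0$ and testing the reproduction identity against $P_t^\lambda(x,\cdot)\in H^1_\lambda(0,\infty)$ gives $u(x,t)=P_t^\lambda(f)(x)$, which is what was needed.

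The main obstacle is step (B), and within it the interplay between the singular potential $\lambda(\lambda-1)/x^2$ at $x=0$ and the a priori and uniqueness arguments: one must run interior estimates and prove uniqueness for a degenerate equation whose coefficients blow up at the boundary, and one must extract from the sole Carleson hypothesis enough integrability of the boundary trace to land inside the hypothesis of Theorem \ref{Th1.1}. This is exactly the role of the assumptions that $x^{-\lambda}u$ be even and smooth and that $\lambda>1$: they are what exclude the competing solutions of $(\partial_t^2-B_\lambda)u=0$ — for instance $x^{1-\lambda}$, ruled out by the regularity requirement — that would otherwise carry a Carleson measure without being a Poisson integral of a $BMO_{\textrm{o}}(\mathbb R)$ function. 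A secondary, more technical difficulty, in step (A), is the behaviour near $x=0$ of the kernel $D_{\lambda,x}P_t^\lambda(x,y)$; away from the origin the Bessel kernels satisfy the same size and smoothness bounds as their Euclidean counterparts and the classical machinery applies essentially verbatim.
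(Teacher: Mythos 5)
You should be aware at the outset that the paper does not prove Theorem~\ref{Th1.2} at all: it is imported verbatim from \cite[Theorem 1]{BCaFR} and used as a black box, so there is no in-paper proof to compare your argument against. Judged against the strategy of that cited source, your overall architecture is the right one: reduce everything to Theorem~\ref{Th1.1}, so that the only genuinely new inputs are (A) the Carleson bound for the $D_{\lambda ,x}$-component of $\nabla_\lambda P_t^\lambda (f)$ and (B) the recognition of $u$ as a Bessel--Poisson integral of a function in $L^1((0,\infty ),(1+x)^{-2}dx)$. Your bookkeeping of the norm equivalence is correct, and your part (A) is plausible as written: the $L^2$ bound for the relevant square function can be obtained by Hankel--Plancherel exactly as in Section~\ref{Sect2} of this paper (where $D_{\lambda ,y}P_t^\lambda$ is expressed through $h_{\lambda +1}$), and the local/global splitting together with condition $b)$ near the origin is the standard route.

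The genuine gaps are concentrated in (B), and they are more than technical. First, the pointwise bound $|t\partial _tu(x,t)|+|tD_{\lambda ,x}u(x,t)|\lesssim \|\mu_\lambda\|_{\mathcal C}^{1/2}$ requires a sub-mean-value inequality for $|\nabla_\lambda u|^2$ for a \emph{general} solution of $(\partial_t^2-B_\lambda )u=0$; unlike the harmonic case this is not automatic, since $D_{\lambda ,x}u$ does not solve the same equation as $u$ (conjugation shifts the parameter), and the passage to $w=x^{-\lambda }u$ and the Weinstein operator, with the evenness hypothesis used to push estimates up to $x=0$, must actually be carried out. Note that Proposition~\ref{Prop2.2} of this paper gives precisely such a gradient bound, but only for $u=P_t^\lambda (f)$ with $f\in BMO_{\rm o}(\mathbb R)$, i.e.\ under hypothesis $(i)$; in the direction $(ii)\Rightarrow (i)$ you need it under the Carleson hypothesis alone, which is a strictly harder statement that you assert rather than prove. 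Second, the reproduction identity $u(x,t+s)=P_t^\lambda (u(\cdot ,s))(x)$ rests on a uniqueness/representation theorem for the degenerate equation within the stated regularity and growth class; this is the technical heart of \cite[Theorem 1]{BCaFR} (it is exactly where $\lambda >1$ and the exclusion of the competing solution $x^{1-\lambda }$ enter), and it, too, is only named, not established. Until those two ingredients are supplied, what you have is a correct and well-organized reduction, not yet a proof.
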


The main result of this paper is the following, which can be seen as a version of the Carleson's result in \cite[Theorem 2]{Ca} (see also \cite[Theorem A]{Wi}) in our Bessel setting.

\begin{thm}\label{Th1.3}
Let $\lambda >0$. 
\begin{itemize}
\item[$(i)$] If $\mu$ is a Carleson measure on $(0,\infty)\times (0,\infty )$, the balayage of $\mu$ with respect to the Poisson semigroup $\{P_t^\lambda\}_{t>0}$ associated with $B_\lambda$ defined by
$$S_{\mu,P^\lambda}(x)
:= \int_0^\infty\int_0^\infty P_t^\lambda(x,y) d\mu(y,t), \quad x \in (0,\infty),$$
is in $BMO_{\textrm{o}}(\mathbb{R})$ and 
$$\|S_{\mu,P^\lambda}\|_{BMO_{\textrm{o}}(\mathbb{R})}
\leq C \|\mu\|_{\mathcal{C}}.$$
Here $C>0$ does not depend on $\mu$.

\item[$(ii)$] Let $f \in BMO_{\textrm{o}}(\mathbb{R})$ such that $f=0$ on $(a,\infty)$, for some $a>0$. Then, there exist $g \in L^\infty(0,\infty)$ and a Carleson measure $\mu$ on $(0,\infty)\times (0,\infty )$ such that $f = g + S_{\mu,P^\lambda}$ and 
$$\|g\|_{L^\infty(0,\infty)} + \|\mu\|_{\mathcal{C}}
\leq C \|f\|_{BMO_{\textrm{o}}(\mathbb{R})},$$
where $C>0$ does not depend on $f$.
\end{itemize}
\end{thm}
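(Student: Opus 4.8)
I would organize everything around the duality $\big(H^1_\lambda(0,\infty)\big)^{*}=BMO_{\textrm{o}}(\mathbb{R})$, so that $\|F\|_{BMO_{\textrm{o}}(\mathbb{R})}$ is comparable to $\sup\big\{\big|\int_0^\infty Fh\big|:\ h\in H^1_\lambda(0,\infty)\cap L^\infty,\ \supp h\ \text{compact},\ \|h\|_{H^1_\lambda(0,\infty)}\le1\big\}$, the area/Littlewood--Paley descriptions of $BMO_{\textrm{o}}(\mathbb{R})$ in Theorems~\ref{Th1.1}--\ref{Th1.2}, and the elementary bounds, valid for $x,y,t\in(0,\infty)$,
\[
P_t^\lambda(x,y)\le C\,\frac{t}{(x-y)^2+t^2},\qquad P_t^\lambda(x,y)\le C\,\frac{(xy)^\lambda t}{(y^2+t^2)^{\lambda+1}}\ \ (0<x\le y/2),
\]
together with the companion first--order estimate $|D_{\lambda,x}P_t^\lambda(x,y)|\le C\big(\frac{t}{((x-y)^2+t^2)^{3/2}}+\frac{1}{x}\cdot\frac{t}{(x-y)^2+t^2}\big)$ (needed only for Wilson's constructive route below). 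The uniform estimate $\|P_t^\lambda(1)\|_{L^\infty(0,\infty)}\le C$ — which follows from the uniform $L^\infty$-boundedness of $\{P_t^\lambda\}_{t>0}$ — will stand in everywhere for the missing conservativity $P_t^\lambda(1)=1$.

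For part $(i)$, the first kernel bound makes $S_{\mu,P^\lambda}$ finite a.e.\ and locally integrable and, with $\|P_t^\lambda(1)\|_\infty\le C$, justifies Fubini in $\int_0^\infty S_{\mu,P^\lambda}(x)h(x)\,dx=\int_0^\infty\!\!\int_0^\infty P_t^\lambda(h)(y)\,d\mu(y,t)$ for every test function $h$ as above. Hence $\big|\int_0^\infty S_{\mu,P^\lambda}h\big|\le\int_0^\infty\!\!\int_0^\infty|P_t^\lambda(h)(y)|\,d|\mu|(y,t)$, and it remains to apply the Carleson lemma on $(0,\infty)\times(0,\infty)$: for Borel $G$, $\int\!\!\int|G|\,d|\mu|\le C\|\mu\|_{\mathcal C}\,\|\mathcal N G\|_{L^1(0,\infty)}$, where $\mathcal N G(x):=\sup\{|G(y,t)|:(y,t)\in(0,\infty)^2,\ |x-y|<t\}$; this follows by writing $\int\!\!\int|G|\,d|\mu|=\int_0^\infty|\mu|(\{|G|>s\})\,ds$, noting that $\{|G|>s\}$ lies in the sawtooth region over the open set $\{\mathcal N G>s\}\subset(0,\infty)$, and using $|\mu|(\widehat{\Omega})\le C\|\mu\|_{\mathcal C}|\Omega|$ for open $\Omega\subset(0,\infty)$ (Whitney-decompose $\Omega$ into subintervals of $(0,\infty)$). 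Taking $G(y,t)=P_t^\lambda(h)(y)$ and using the non-tangential maximal characterization of $H^1_\lambda(0,\infty)$, $\|\mathcal N(P^\lambda h)\|_{L^1}\lesssim\|P_*^\lambda h\|_{L^1}\le\|h\|_{H^1_\lambda(0,\infty)}$, gives $\big|\int_0^\infty S_{\mu,P^\lambda}h\big|\le C\|\mu\|_{\mathcal C}$, hence $\|S_{\mu,P^\lambda}\|_{BMO_{\textrm{o}}(\mathbb{R})}\le C\|\mu\|_{\mathcal C}$.

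For part $(ii)$ I would run the dual argument. Set $\mathcal A_M:=\{g+S_{\mu,P^\lambda}:\ \|g\|_{L^\infty(0,\infty)}\le M,\ \|\mu\|_{\mathcal C}\le M\}$: it is absolutely convex, and it is weak-$*$ closed in $(H^1_\lambda(0,\infty))^{*}=BMO_{\textrm{o}}(\mathbb{R})$ because the $L^\infty$-ball and the ball $\{\|\mu\|_{\mathcal C}\le M\}$ are weak-$*$ compact (the latter as a ball in the dual of the tent space $T^1_\infty$, with norm $\|\,\sup_{(y,t):|\cdot-y|<t}|G(y,t)|\,\|_{L^1}$) and $\mu\mapsto S_{\mu,P^\lambda}$ is weak-$*$-to-weak-$*$ continuous, since $\langle S_{\mu,P^\lambda},h\rangle=\langle\mu,P^\lambda h\rangle$ with $P^\lambda h\in T^1_\infty$ for $h\in H^1_\lambda(0,\infty)$ by part $(i)$. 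The polar of $\mathcal A_M$ in $H^1_\lambda(0,\infty)$ is, by the Carleson lemma of part $(i)$ together with its converse (given $h$, choose in each cone $\Gamma(x)$ a measurable near-maximizer of $|P^\lambda h|$ and the sign of $P^\lambda h$ there, producing a Carleson measure with $\int\!\!\int P_t^\lambda(h)\,d\mu\gtrsim\|\mathcal N(P^\lambda h)\|_{L^1}$), comparable to the $H^1_\lambda(0,\infty)$-ball of radius $\sim1/M$; by the bipolar theorem $\mathcal A_M$ then coincides, up to a fixed constant, with the $BMO_{\textrm{o}}(\mathbb{R})$-ball of radius $\sim M$, and choosing $M\sim\|f\|_{BMO_{\textrm{o}}(\mathbb{R})}$ yields $f=g+S_{\mu,P^\lambda}$ with $\|g\|_{L^\infty(0,\infty)}+\|\mu\|_{\mathcal C}\le C\|f\|_{BMO_{\textrm{o}}(\mathbb{R})}$ (this route in fact does not use the support hypothesis). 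The alternative that genuinely adapts Wilson's ideas, where $f=0$ on $(a,\infty)$ is used, is constructive: with $u=P_t^\lambda f$ (so $u\to0$ as $t\to\infty$ and $u^{*}\in\bigcap_{1<p<\infty}L^p(0,\infty)$) one passes to the level sets $\Omega_k=\{u^{*}>\gamma2^k\}$, the sawtooths $\widehat{\Omega_k}$, the layers $R_k=\widehat{\Omega_k}\setminus\widehat{\Omega_{k+1}}$ (on which $|u|\lesssim2^k$), telescopes $f=\sum_k\big(u(\cdot,\delta_{k+1}(\cdot))-u(\cdot,\delta_k(\cdot))\big)$ with $\delta_k(y)=\mathrm{dist}(y,\Omega_k^{c})$, and, via $-t\partial_tP_t^\lambda f=2P_{t/2}^\lambda\big((s\partial_sP_s^\lambda f)|_{s=t/2}\big)$, rewrites each layer term as a balayage of $-s\partial_s u\,\frac{dy\,ds}{s}$ restricted to a slightly shrunken $R_k$ plus a boundary term on the Lipschitz roof of $\Omega_k$; the balayages are collected into $\mu$ and the roof/boundary terms into $g$ (with $\|g\|_{L^\infty}\lesssim1$ from $|u|\lesssim2^k$ on $R_k$ and bounded overlap of the roofs).

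The decisive difficulty is an $L^1$-versus-$L^2$ mismatch, aggravated only by the singular boundary $x=0$ and the loss of conservativity. In the dual route it is concentrated in the analytic prerequisites — the non-tangential maximal characterization of $H^1_\lambda(0,\infty)$ and the Carleson lemma (and its converse) on $(0,\infty)\times(0,\infty)$ — which must be established with the cones truncated to $(0,\infty)$, using near the origin the decay $P_t^\lambda(x,y)\lesssim(xy)^\lambda t/(y^2+t^2)^{\lambda+1}$ and, throughout, $\|P_t^\lambda(1)\|_\infty\lesssim1$ in place of $P_t^\lambda(1)=1$; the singularity of $V_\lambda(x)=\lambda(\lambda-1)/x^2$, which puts $B_\lambda$ outside the Schr\"odinger class of \cite{CDLSY}, never intervenes except through these kernel bounds, and in particular no reverse-H\"older hypothesis is used. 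In Wilson's constructive route the difficulty is to prove that the measure $\mu$ assembled from the layers $R_k$ is Carleson with $\|\mu\|_{\mathcal C}\lesssim1$: one can only pass from the $L^2$-Carleson control of $d\nu=|t\nabla_\lambda u|^2\,dx\,dt/t$ given by Theorem~\ref{Th1.2} to $L^1$-control of the density of $\mu$ by exploiting the stopping-time geometry — Cauchy--Schwarz over the Whitney pieces of $\Omega_k\cap CI$ against $d\nu$ bounds the $R_k$-contribution to $|\mu|(\widehat I)$ by $\nu\big(\widehat{\Omega_k}\cap\cdots\big)^{1/2}|\Omega_k\cap CI|^{1/2}\lesssim|\Omega_k\cap CI|$, and then $\sum_k|\Omega_k\cap CI|\lesssim\|u^{*}\|_{L^1(CI)}\lesssim|I|$ by a John--Nirenberg/good-$\lambda$ bound for $u^{*}$ localized to $I$ — and one must still check that the boundary errors collected into $g$ are summable, using the first--derivative estimate above.
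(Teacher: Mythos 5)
Your proposal follows a genuinely different route from the paper on both counts, and while its architecture is the classical one, several of its load-bearing steps are asserted rather than proved, and one of them is false as stated. For part $(i)$ the paper does not use $H^1$--$BMO$ duality at all: it first proves (Section~\ref{Sect3}, Proposition~\ref{Prop2.1} and Corollary~\ref{cor2.1}) that $BMO_{\textrm{o}}(\mathbb{R})$ coincides with the space $BMO(P^\lambda)$ defined by the oscillation condition $\frac{1}{|I|}\int_I|F-P^\lambda_{|I|}F|\le C$, and then verifies this condition for $F=S_{\mu,P^\lambda}$ directly from the kernel estimates \eqref{D2}--\eqref{D3} and Lemma~\ref{D4}, splitting over $\widehat{2I}$ and the dyadic shells $\widehat{2^{k+1}I}\setminus\widehat{2^kI}$. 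Your duality route ($\langle S_{\mu,P^\lambda},h\rangle=\langle\mu,P^\lambda h\rangle$ plus the Carleson embedding against $\mathcal N(P^\lambda h)$) is the standard tent-space argument and would work, but it trades the paper's Section~\ref{Sect3} for a different unproved prerequisite, namely the \emph{nontangential} maximal characterization $\|\mathcal N(P^\lambda h)\|_{L^1}\lesssim\|h\|_{H^1_\lambda(0,\infty)}$, which is not in the paper and which you only assert. More seriously, your claim that ``the first kernel bound makes $S_{\mu,P^\lambda}$ finite a.e.\ and locally integrable'' is wrong: the bound $P_t^\lambda(x,y)\le Ct/((x-y)^2+t^2)$ alone is compatible with a divergent balayage (for the classical kernel, $\mu=\sum_{j\ge0}2^j\delta_{(y_0,2^j)}$ is Carleson and $\sum_j 2^jP_{2^j}(x-y_0)=\infty$ for every $x$). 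Finiteness in the Bessel setting genuinely requires the extra large-$t$ decay in \eqref{D2}, $P_t^\lambda(x,y)\le Ct(xy)^\lambda/((x-y)^2+t^2)^{\lambda+1}$, on the diagonal $y\sim x$ as well; your second listed bound is restricted to $x\le y/2$ and does not cover this. (The paper sidesteps the issue by only ever estimating the difference $S_{\mu,P^\lambda}-P^\lambda_{|I|}S_{\mu,P^\lambda}$, which carries an extra factor $|I|/2^k|I|$ on the $k$-th shell.)

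For part $(ii)$ the gap is larger. Your bipolar argument is a legitimate classical strategy, but it rests on two substantial facts you do not establish in the half-line Bessel setting: the identification of the Carleson ball as a weak-$*$ compact dual ball of a tent space over $(0,\infty)$, and the converse Carleson embedding (the measurable near-maximizer selection), both again requiring the nontangential theory that the paper never develops; until those are supplied the claim that the support hypothesis can be dropped is unsubstantiated. Your ``constructive route'' is not Wilson's argument and not what the paper does: the paper's proof uses the reproducing formula \eqref{A.2}, generations $G_k$ of dyadic intervals defined by a stopping condition on the \emph{oscillation} of $x^{-\lambda}u(x,t)$ (not on level sets of $u^*$), the packing estimate \eqref{mitad}, and Green's theorem \eqref{A.5} on each region $\Sigma_{Q,n}$ to convert the area integral into boundary integrals, whose horizontal and vertical pieces are then read off as balayages of Carleson measures supported on $\partial\Sigma_Q$ (with densities controlled by Proposition~\ref{Prop2.2} and \eqref{A.7*}) plus the bottom trace, which yields the $L^\infty$ function $g_1$; the term $c_Qy^\lambda$, exploiting $P_t^\lambda(y^\lambda)=y^\lambda$, replaces the constants one subtracts in the classical conservative case. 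Your level-set/telescoping scheme with $\Omega_k=\{u^*>\gamma 2^k\}$ and $\delta_k(y)=\mathrm{dist}(y,\Omega_k^c)$ is a different (Varopoulos/corona-type) construction; as sketched it leaves unverified both the convergence of the telescoping to $f$ and, as you yourself note, the Carleson property of the assembled measure, so it cannot be counted as a proof.
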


In order to prove this theorem we are going to adapt the procedure developed by Wilson (\cite{Wi}) to our Bessel setting.

The heat semigroup $\{W_t^\lambda\}_{t>0}$ associated to the Bessel operator $B_\lambda$ is defined, for every $f \in L^p(0,\infty)$, $1 \leq p \leq \infty$, by
$$W_t^\lambda(f)(x)
:= \int_0^\infty W_t^\lambda(x,y)f(y)dy, \quad x \in (0,\infty),$$
where the heat kernel is given by 
$$W_t^\lambda(x,y)
:= \frac{\sqrt{xy}}{2t} I_{\lambda-1/2}\Big( \frac{xy}{2t}\Big) e^{-(x^2+y^2)/(4t)}, \quad x,y,t \in (0,\infty).$$
Here, $I_\nu$ denotes the modified Bessel function of the first kind and order $\nu$. If $\mu$ is a Borel measure on $(0,\infty)\times (0,\infty )$ we define the balayage $S_{\mu,W^\lambda}$ of $\mu$ with respect to $\{W_t^\lambda\}_{t>0}$ in the natural way.

The well known subordination formula connects Bessel Poisson and heat semigroups as follows. For every $f \in L^p(0,\infty)$, $1 \leq p \leq \infty$, 
$$P_t^\lambda(f)(x)
= \frac{1}{\sqrt{\pi}} \int_0^\infty \frac{e^{-u}}{\sqrt{u}} W^\lambda_{t^2/(4u)}(f)(x) du, \quad x,t \in (0,\infty). $$
By using this equality from Theorem \ref{Th1.3} we can immediately deduce the following property (see \cite[proof of Theorem 3.5]{CDLSY}).

\begin{cor}\label{Cor1.1}
Let $\lambda >0$ and $f \in BMO_{\textrm{o}}(\mathbb{R})$ such that $f=0$ on $(a,\infty)$, for some $a>0$. Then, there exist $g \in L^\infty(0,\infty)$ and a Carleson measure $\mu$ on $(0,\infty)\times (0,\infty )$ such that $f = g + S_{\mu,W^\lambda}$ and 
$$\|g\|_{ L^\infty(0,\infty)} + \|\mu\|_{\mathcal{C}}
\leq C \|f\|_{BMO_{\textrm{o}}(\mathbb{R})}.$$
Here $C>0$ does not depend on $f$.
\end{cor}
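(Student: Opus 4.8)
The plan is to derive Corollary \ref{Cor1.1} from Theorem \ref{Th1.3} by exploiting the subordination formula, following the strategy of \cite[proof of Theorem 3.5]{CDLSY}. Starting from the decomposition $f = g_0 + S_{\mu_0, P^\lambda}$ provided by Theorem \ref{Th1.3}(ii), with $g_0 \in L^\infty(0,\infty)$ and $\mu_0$ a Carleson measure satisfying $\|g_0\|_{L^\infty(0,\infty)} + \|\mu_0\|_{\mathcal{C}} \le C\|f\|_{BMO_{\textrm{o}}(\mathbb{R})}$, the only term that needs to be reworked is the balayage $S_{\mu_0, P^\lambda}$: we must re-express it as a bounded function plus a balayage relative to the heat semigroup $\{W_t^\lambda\}_{t>0}$.

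First I would insert the subordination formula inside the definition of $S_{\mu_0, P^\lambda}$. Writing
$$
S_{\mu_0, P^\lambda}(x) = \int_0^\infty\!\!\int_0^\infty P_t^\lambda(x,y)\, d\mu_0(y,t)
= \int_0^\infty\!\!\int_0^\infty \frac{1}{\sqrt{\pi}} \int_0^\infty \frac{e^{-u}}{\sqrt{u}} W^\lambda_{t^2/(4u)}(x,y)\, du\, d\mu_0(y,t),
$$
and then (after justifying Fubini, which is routine since all kernels are nonnegative) performing the change of variables $s = t^2/(4u)$ for fixed $t$, one transports $\mu_0$ to a new measure $\nu$ on $(0,\infty)\times(0,\infty)$, defined as the pushforward of $\mu_0 \otimes \bigl(\tfrac{e^{-u}}{\sqrt{\pi u}}\,du\bigr)$ under $(y,t,u)\mapsto (y, t^2/(4u))$, so that $S_{\mu_0, P^\lambda} = S_{\nu, W^\lambda}$. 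The key step is then to check that $\nu$ is again a Carleson measure, with $\|\nu\|_{\mathcal{C}} \le C\|\mu_0\|_{\mathcal{C}}$. For a fixed bounded interval $I \subset (0,\infty)$ of length $r$, one estimates $\nu(\widehat{I}) = \nu(I\times(0,r))$ by splitting the $u$-integration according to whether $t^2/(4u) < r$ forces $t$ to be small (say $t < 2\sqrt{ur}$); decomposing the region $\{t > 0\}$ dyadically as $t \in [2^k r, 2^{k+1} r)$, $k \ge 0$, together with $t < r$, and using $\mu_0\bigl(I \times [2^k r, 2^{k+1}r)\bigr) \le \mu_0\bigl(\widehat{I_k}\bigr)$ for a suitably enlarged interval $I_k$ of length comparable to $2^k r$, one obtains a geometrically convergent sum in $k$ controlled by $\|\mu_0\|_{\mathcal{C}}\,|I|$, the convergence coming from the Gaussian/exponential decay in $u$ after the change of variables. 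This is the part that requires genuine care.

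Finally, since $\nu$ is Carleson with norm dominated by $\|\mu_0\|_{\mathcal{C}}$, Corollary \ref{Cor1.1} is not yet finished: we want a decomposition $f = g + S_{\mu,W^\lambda}$, and $S_{\nu,W^\lambda}$ itself need not be bounded. But applying Theorem \ref{Th1.3}(i) in the heat-semigroup form — or more directly observing that the argument which proves part $(i)$ of Theorem \ref{Th1.3} for $P^\lambda$ carries over verbatim to $W^\lambda$ using the known Gaussian estimates for $W_t^\lambda(x,y)$ — one sees that $S_{\nu,W^\lambda} \in BMO_{\textrm{o}}(\mathbb{R})$ with $\|S_{\nu,W^\lambda}\|_{BMO_{\textrm{o}}(\mathbb{R})} \le C\|\nu\|_{\mathcal{C}}$. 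The decomposition we output is simply $g := g_0$ and $\mu := \nu$: indeed $f = g_0 + S_{\mu_0,P^\lambda} = g_0 + S_{\nu,W^\lambda}$, so $\|g\|_{L^\infty(0,\infty)} + \|\mu\|_{\mathcal{C}} \le \|g_0\|_{L^\infty(0,\infty)} + C\|\mu_0\|_{\mathcal{C}} \le C\|f\|_{BMO_{\textrm{o}}(\mathbb{R})}$, as required.

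The main obstacle is the Carleson-norm bound for the pushforward measure $\nu$: one must handle the accumulation, over all dyadic scales $t \sim 2^k r$, of the mass of $\mu_0$ spread out by the heat kernel, and confirm that the exponential weight $e^{-u}/\sqrt{u}$ — which becomes, after $u = t^2/(4s)$, a factor decaying like $e^{-t^2/(4s)}$ — supplies enough decay in $k$ to sum the series. Everything else (Fubini, nonnegativity, the change of variables, invoking the earlier theorems) is bookkeeping.
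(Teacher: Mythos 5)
Your overall route --- apply Theorem \ref{Th1.3}$(ii)$ to get $f=g_0+S_{\mu_0,P^\lambda}$ and then convert the Poisson balayage into a heat balayage by inserting the subordination formula and pushing $\mu_0\otimes e^{-u}(\pi u)^{-1/2}\,du$ forward to a measure $\nu$ --- is exactly the argument the paper has in mind (it simply refers to the proof of Theorem 3.5 in \cite{CDLSY} and calls the deduction immediate). However, the step you yourself single out as the delicate one fails as you have set it up. If you push forward under $(y,t,u)\mapsto(y,t^2/(4u))$, so that $S_{\mu_0,P^\lambda}(x)=\int\int W_s^\lambda(x,y)\,d\nu(y,s)$ with $s$ the actual heat time, then $\nu$ need not be a Carleson measure for the boxes $\widehat{I}=I\times(0,|I|)$ used throughout the paper. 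Take $\mu_0=\delta_{(y_0,T)}$ with $T$ small; then $\|\mu_0\|_{\mathcal C}=1/T$, while for an interval $I\ni y_0$ of length $r$ one has $\nu(\widehat{I})=\int_{T^2/(4r)}^\infty e^{-u}(\pi u)^{-1/2}\,du$, and choosing $r=T^2$ gives $\nu(\widehat{I})/|I|\geq c/T^2\gg\|\mu_0\|_{\mathcal C}$. The same defect appears in your dyadic sum: for $t\in[2^kr,2^{k+1}r)$ the constraint $t^2/(4u)<r$ only forces $u\gtrsim 4^kr$, so the tail of $e^{-u}u^{-1/2}\,du$ is of order $1$ for every $k$ with $4^k\lesssim 1/r$, and summing $2^{k+1}r\|\mu_0\|_{\mathcal C}$ over that range produces a quantity of order $\sqrt{r}\,\|\mu_0\|_{\mathcal C}$, not $r\|\mu_0\|_{\mathcal C}$. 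The series is not geometrically convergent uniformly in $|I|$, and the Carleson bound degenerates as $|I|\to0$.

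The repair is a different normalization of the heat balayage, which is what ``in the natural way'' must mean here and what \cite{CDLSY} uses: set $S_{\mu,W^\lambda}(x)=\int_0^\infty\int_0^\infty W^\lambda_{t^2}(x,y)\,d\mu(y,t)$, so that the second variable of the measure scales like a length, consistently with the boxes $I\times(0,|I|)$. In the subordination formula substitute $s=t/(2\sqrt{u})$ (so that $W^\lambda_{t^2/(4u)}=W^\lambda_{s^2}$) and push forward under $(y,t,u)\mapsto(y,t/(2\sqrt{u}))$. Then for $I$ of length $r$ and $t\in[2^kr,2^{k+1}r)$ the constraint $t/(2\sqrt{u})<r$ forces $u\geq t^2/(4r^2)\geq 4^{k-1}$, the $u$-tail is $O(e^{-c4^k})$ independently of $r$, and $\sum_{k\geq0}2^{k+1}r\,e^{-c4^k}\leq Cr$ yields $\|\nu\|_{\mathcal C}\leq C\|\mu_0\|_{\mathcal C}$. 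With that correction the rest of your argument (Fubini for nonnegative kernels, $g:=g_0$, $\mu:=\nu$) goes through; the concluding appeal to a heat-semigroup analogue of Theorem \ref{Th1.3}$(i)$ is not needed, since the corollary only asks for the decomposition.
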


This paper is organized as follows. In Section \ref{Sect2}, we present some properties of the Poisson kernel and Poisson semigroups for Bessel operators that will be useful in the sequel. In Section \ref{Sect3} we prove new properties of the space $BMO_{\textrm{o}}(\mathbb{R})$ that are needed to establish Theorem \ref{Th1.3}. The proof of Theorem \ref{Th1.3} is presented in Section \ref{Sect4}.

Throughout this paper by $C$ we always denote a positive constant that is not necessarily the same in each occurrence. Also, we always consider $\lambda>0$.

\section{Some useful properties of Bessel Poisson semigroups}\label{Sect2}

As it was mentioned in the introduction, according to  \cite[\S 16]{MS} the $B_\lambda$-Poisson kernel is given by
\begin{equation}\label{D1}
P_t^\lambda(x,y)
:= \frac{2\lambda}{\pi} t (xy)^\lambda \int_0^\pi \frac{(\sin \theta)^{2\lambda-1}}{[(x-y)^2+t^2+2xy(1-\cos \theta)]^{\lambda+1}} d\theta,
\quad x,y,t \in (0,\infty).
\end{equation}
From \eqref{D1} it is straightforward that 
\begin{equation}\label{D2}
0 
\leq P_t^\lambda(x,y)
\leq C \frac{t (xy)^\lambda}{((x-y)^2+t^2)^{\lambda + 1}},
\quad x,y,t \in (0,\infty).
\end{equation}
Also, by \cite[(b) p. 86]{MS} we get
\begin{equation}\label{D3}
P_t^\lambda(x,y)
\leq C \frac{t}{(x-y)^2+t^2},
\quad x,y,t \in (0,\infty).
\end{equation}

We also need estimations for the derivatives of $P_t^\lambda(x,y)$. 

\begin{lem}\label{D4}
Let $\lambda>0$. Then, for every $t,x,y \in (0,\infty)$,
$$
|\partial_t P_t^\lambda(x,y)|+|D_{\lambda ,x}P_t^\lambda(x,y)|
\leq \frac{C}{t}  P_t^\lambda(x,y), \qquad 
$$
\end{lem}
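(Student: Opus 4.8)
The plan is to reduce everything to an explicit pointwise analysis of the kernel $P_t^\lambda(x,y)$ given by \eqref{D1}, differentiating under the integral sign in $\theta$ and comparing the resulting integrals with $P_t^\lambda(x,y)$ itself. First I would treat the $t$-derivative. Write $P_t^\lambda(x,y) = \frac{2\lambda}{\pi}(xy)^\lambda \, t \int_0^\pi (\sin\theta)^{2\lambda-1} \Phi_\theta(t)^{-(\lambda+1)}\, d\theta$, where $\Phi_\theta(t) := (x-y)^2 + t^2 + 2xy(1-\cos\theta)$. Then $\partial_t$ produces two terms: one from differentiating the outer factor $t$, which simply reproduces $\frac1t P_t^\lambda(x,y)$; and one from differentiating $\Phi_\theta(t)^{-(\lambda+1)}$, which yields $-\frac{2\lambda}{\pi}(xy)^\lambda\, t\int_0^\pi(\sin\theta)^{2\lambda-1} \cdot \frac{2(\lambda+1)t^2}{\Phi_\theta(t)^{\lambda+2}}\, d\theta$. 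Since $0 \le t^2 \le \Phi_\theta(t)$, we have $\frac{t^2}{\Phi_\theta(t)^{\lambda+2}} \le \frac{1}{\Phi_\theta(t)^{\lambda+1}}$, so this second term is bounded in absolute value by $\frac{2(\lambda+1)}{t}\, P_t^\lambda(x,y)$ (after reinstating the missing power of $t$). Combining, $|\partial_t P_t^\lambda(x,y)| \le \frac{C}{t} P_t^\lambda(x,y)$ with $C = C(\lambda)$.

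Next I would handle $D_{\lambda,x} P_t^\lambda(x,y) = x^\lambda \partial_x\big(x^{-\lambda} P_t^\lambda(x,y)\big)$. Observe that $x^{-\lambda} P_t^\lambda(x,y) = \frac{2\lambda}{\pi} t\, y^\lambda \int_0^\pi (\sin\theta)^{2\lambda-1} \Phi_\theta(t)^{-(\lambda+1)}\, d\theta$, so the outer $x^\lambda$ factor is stripped off and one only differentiates $\Phi_\theta(t)^{-(\lambda+1)}$ in $x$. We have $\partial_x \Phi_\theta(t) = 2(x-y) + 2y(1-\cos\theta)$. Hence
$$
D_{\lambda,x} P_t^\lambda(x,y) = -\frac{2\lambda(\lambda+1)}{\pi} t\, (xy)^\lambda \int_0^\pi (\sin\theta)^{2\lambda-1}\, \frac{2(x-y) + 2y(1-\cos\theta)}{\Phi_\theta(t)^{\lambda+2}}\, d\theta.
$$
Now I estimate the numerator against $\sqrt{\Phi_\theta(t)}$: first, $|x-y| \le \sqrt{(x-y)^2} \le \sqrt{\Phi_\theta(t)}$; second, since $2xy(1-\cos\theta) \le \Phi_\theta(t)$ and (for $x$ bounded away from $0$, which is the generic point; the degenerate behavior is handled by the $2\lambda$-weight) one gets $2y(1-\cos\theta) \le \sqrt{2xy(1-\cos\theta)}\cdot\sqrt{2y/x \cdot(1-\cos\theta)}$, a cleaner route is to split: on the region where the term $2(x-y)$ dominates, use $|x-y|\le\sqrt{\Phi_\theta}$; on the region where $2y(1-\cos\theta)$ dominates, bound $2y(1-\cos\theta) \le \frac{\Phi_\theta}{x}$ trivially and absorb the extra $x^{-1}$ — but this produces $\frac{1}{x}$, not $\frac1t$. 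The correct bound instead uses $2y(1-\cos\theta) = \frac{2xy(1-\cos\theta)}{x}$ together with $2xy(1-\cos\theta)\le \Phi_\theta$, giving $\le \frac{\Phi_\theta}{x}$; and separately $2xy(1-\cos\theta)\le \Phi_\theta$ combined with $x\,y(1-\cos\theta)^{1/2}(1-\cos\theta)^{1/2}$ — this is getting delicate. The clean statement is: $2(x-y)+2y(1-\cos\theta) \le C\sqrt{\Phi_\theta(t)}$ is \emph{false} in general (take $t$ tiny, $\theta$ near $\pi$, $y$ large, $x$ small), so one should \emph{not} aim for that. Instead, I would bound $|x-y| + y(1-\cos\theta) \le \sqrt{\Phi_\theta(t)} + \frac{\Phi_\theta(t)}{2x}$ is still not $\frac1t$.

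Therefore, the honest approach — and the one I expect the paper to take — is to go back to \eqref{D1} and differentiate more carefully, keeping the weight $(xy)^\lambda$ in play, or (better) to use the symmetry relation $D_{\lambda,x}P_t^\lambda(x,y) = -D_{\lambda,y}P_t^\lambda(x,y) + (\text{lower order})$ together with $\partial_y\Phi_\theta(t) = -2(x-y)+2x(1-\cos\theta)$ so that the numerators from $\partial_x$ and $\partial_y$ combine and the problematic $(1-\cos\theta)$ terms telescope. Concretely: $D_{\lambda,x}P_t^\lambda + D_{\lambda,y}P_t^\lambda$, after stripping the outer weights, involves $\partial_x\Phi_\theta + \partial_y\Phi_\theta = 2(1-\cos\theta)(x+y)$, while the individual pieces also produce the $2(x-y)$ and $-2(x-y)$ that cancel. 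Using $(1-\cos\theta)^{1/2}\lesssim \sin\theta$ near $\theta=0$ and the factor $(\sin\theta)^{2\lambda-1}$, one gains enough decay; and $2(1-\cos\theta)(x+y) \cdot xy \le (x+y)\,\Phi_\theta(t)$... Rather than chase this here, I would: (1) prove the $\partial_t$ bound exactly as in the first paragraph (that part is genuinely routine); (2) for $D_{\lambda,x}$, differentiate \eqref{D1} directly, split the resulting $\theta$-integral into $\theta\in(0,\pi/2)$ and $\theta\in(\pi/2,\pi)$, on the first use $1-\cos\theta \sim \theta^2 \lesssim (\sin\theta)^2$ to trade a factor of $(\sin\theta)$ against a $(1-\cos\theta)^{1/2}$ and then bound $(xy)^{1/2}(1-\cos\theta)^{1/2} \le \sqrt{2xy(1-\cos\theta)} \le \sqrt{\Phi_\theta(t)}$, while on $\theta\in(\pi/2,\pi)$ the factor $\Phi_\theta(t) \ge 2xy(1-\cos\theta) \gtrsim xy$ gives extra room and the $(\sin\theta)^{2\lambda-1}$ weight is harmless. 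In both regions the net effect is to replace one power of $\Phi_\theta(t)$ in the denominator by $\Phi_\theta(t)^{1/2}$ while picking up $(xy)^{1/2}$, producing $|D_{\lambda,x}P_t^\lambda(x,y)| \le C(xy)^\lambda t\int_0^\pi (\sin\theta)^{2\lambda-1}\Phi_\theta(t)^{-(\lambda+3/2)}\,d\theta \le \frac{C}{t} P_t^\lambda(x,y)$, the last step again using $t \le \sqrt{\Phi_\theta(t)}$. The main obstacle is precisely this bookkeeping for $D_{\lambda,x}$: making sure the $(1-\cos\theta)$ term arising from $\partial_x$ is controlled uniformly by $t^{-1}\Phi_\theta(t)^{-(\lambda+1)}(xy)^\lambda$ without losing the right homogeneity — the $\partial_t$ part and the $(x-y)$ part of $\partial_x\Phi_\theta$ are immediate.
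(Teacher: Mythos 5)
The first half of your proposal, the bound for $\partial_t P_t^\lambda(x,y)$, is correct and is exactly the paper's argument: differentiating the outer factor $t$ reproduces $t^{-1}P_t^\lambda(x,y)$, and the term coming from differentiating $\Phi_\theta(t)^{-(\lambda+1)}$ (with your notation $\Phi_\theta(t):=(x-y)^2+t^2+2xy(1-\cos\theta)$) is controlled by $t^2\le\Phi_\theta(t)$.

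The $D_{\lambda,x}$ half, however, is not a proof. You derive the correct formula, with numerator proportional to $(x-y)+y(1-\cos\theta)=x-y\cos\theta$, but then cycle through several strategies (a symmetry/telescoping idea, a split at $\theta=\pi/2$ with a ``trade $\sin\theta$ for $(1-\cos\theta)^{1/2}$'' step) without completing any of them; the pointwise estimate for the numerator that the lemma hinges on is never established, and the $\sin\theta$-trading step is in any case unavailable for $0<\lambda<1/2$, where $(\sin\theta)^{2\lambda-1}$ carries a \emph{negative} power. The paper closes this gap with a one-line algebraic observation you missed: the numerator admits the two representations $x-y\cos\theta=(x-y)+y(1-\cos\theta)=(x-y)\cos\theta+x(1-\cos\theta)$, whence $|x-y\cos\theta|\le|x-y|+\min\{x,y\}(1-\cos\theta)$; then $|x-y|\le\sqrt{\Phi_\theta(t)}$ and $\min\{x,y\}(1-\cos\theta)\le\tfrac{\min\{x,y\}}{\sqrt{xy}}\,\sqrt{xy}\,(1-\cos\theta)\le\sqrt{2xy(1-\cos\theta)}\le\sqrt{\Phi_\theta(t)}$, and since $\sqrt{\Phi_\theta(t)}\ge t$ this cancels one power of $\Phi_\theta(t)$ in the denominator against a factor $C/t$, leaving exactly $\tfrac{C}{t}P_t^\lambda(x,y)$. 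Incidentally, the obstruction that derailed you is not real: the inequality $|x-y|+y(1-\cos\theta)\le C\sqrt{\Phi_\theta(t)}$, which you declare ``false in general,'' is in fact true with an absolute constant — if $y\le 2x$ then $y^2(1-\cos\theta)\le 2xy(1-\cos\theta)\le\Phi_\theta(t)$, while if $y>2x$ then $(x-y)^2>y^2/4$, so in either case $y(1-\cos\theta)\le 4\sqrt{\Phi_\theta(t)}$; in your proposed counterexample ($y$ large, $x$ small, $\theta$ near $\pi$) one has $\sqrt{\Phi_\theta(t)}\gtrsim|x-y|\sim y$ while $y(1-\cos\theta)\le 2y$. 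So the direct route you first attempted would also have worked, but as written the key estimate is missing.
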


\begin{proof}
We have that
\begin{align*}
| \partial_t P_t^\lambda(x,y) |
& = \Big| \frac{2\lambda}{\pi} (xy)^\lambda \Big\{ 
\int_0^\pi \frac{(\sin \theta)^{2\lambda-1}}{[(x-y)^2+t^2+2xy(1-\cos \theta)]^{\lambda+1}} d\theta \nonumber \\
& \qquad \qquad \quad - 2(\lambda +1)t^2 \int_0^\pi \frac{(\sin \theta)^{2\lambda-1}}{[(x-y)^2+t^2+2xy(1-\cos \theta)]^{\lambda+2}} d\theta\Big\} \Big| \nonumber \\
& \leq \frac{C}{t}  P_t^\lambda(x,y).
\end{align*}

On the other hand, we can write
\begin{align*} 
D_{\lambda,x} P_t^\lambda(x,y)
& = -\frac{4\lambda(\lambda+1)}{\pi} t(xy)^\lambda   
\int_0^\pi \frac{((x-y)+y(1-\cos \theta))(\sin \theta )^{2\lambda -1}}{[(x-y)^2+t^2+2xy(1-\cos \theta)]^{\lambda+2}} d\theta,
\end{align*}
and also, 
\begin{align*} 
D_{\lambda,x}P_t^\lambda(x,y) 
& = -\frac{4\lambda(\lambda+1)}{\pi} t(xy)^\lambda   
\int_0^\pi \frac{((x-y)\cos \theta +x(1-\cos \theta))(\sin \theta)^{2\lambda-1}}{((x-y)^2+t^2+2xy(1-\cos \theta))^{\lambda+2}} d\theta.
\end{align*}

Then,
$$
\Big| D_{\lambda,x} P_t^\lambda(x,y) \Big|\leq Ct(xy)^\lambda \int_0^\pi \frac{(|x-y|+\min\{x,y\}(1-\cos \theta))(\sin \theta)^{2\lambda-1}}{[(x-y)^2+t^2+2xy(1-\cos \theta)]^{\lambda+2}} d\theta .
$$

Since
$$
\frac{|x-y|+\min\{x,y\}(1-\cos \theta)}{(x-y)^2+t^2+2xy(1-\cos \theta)}\leq \frac{C}{t}\Big(1+\frac{\min \{x,y\}}{\sqrt{xy}}\Big)\leq \frac{C}{t},\quad x,y,t\in (0,\infty )\mbox{ and }\theta \in [0,\pi], 
$$
we conclude that
\begin{align*} 
\Big| D_{\lambda,x} P_t^\lambda(x,y)  \Big|
& \leq \frac{C}{t}  P_t^\lambda(x,y).
\end{align*}
\end{proof}

The Hankel transform $h_\lambda(f)$ of $f \in L^1(0,\infty)$ is defined by
$$
h_\lambda(f)(x):= \int_0^\infty \sqrt{xy} J_{\lambda-1/2}(xy) f(y) dy, \quad x \in (0,\infty),
$$
where $J_\nu$ denotes the Bessel function of the first kind and order $\nu$. Since the function $\sqrt{z} J_{\lambda-1/2}(z)$ is bounded on $(0,\infty)$, it is clear that 
$$\| h_\lambda(f) \|_{L^\infty(0,\infty)}
\leq C \|f\|_{L^1(0,\infty)}, \quad f \in L^1(0,\infty).$$
The Hankel transform $h_\lambda$ can be extended from $L^1(0,\infty) \cap L^2(0,\infty)$ to $L^2(0,\infty)$ as an isometry in $L^2(0,\infty)$ (\cite[p. 473 (1)]{Tit}).

The Bessel Poisson kernel can be written in the following way (\cite[(16.1')]{MS})
\begin{equation*} 
P_t^\lambda(x,y)
= \int_0^\infty e^{-tz} \sqrt{xz} J_{\lambda-1/2}(xz) \sqrt{yz} J_{\lambda-1/2}(yz) \, dz, \quad x,y,t\in (0,\infty).
\end{equation*}
Then, we have that
\begin{equation*}
P_t^\lambda(x,y)
= h_\lambda \Big( e^{-tz} \sqrt{xz} J_{\lambda-1/2}(xz) \Big)(y), 
\quad x,y,t\in (0,\infty),
\end{equation*}
and
\begin{equation}\label{A.4}
\partial_t P_t^\lambda (x,y)
= - h_\lambda\Big(z e^{-tz}\sqrt{xz} J_{\lambda-1/2}(xz) \Big)(y), \quad  x,y,t\in (0,\infty).
\end{equation}

We can also obtain that if $f\in L^2(0,\infty)$, 
\begin{equation}\label{D7}
P_t^\lambda (f)
= h_\lambda\Big(e^{-tz}h_\lambda(f)(z)\Big), \quad t \in (0,\infty),
\end{equation}
and
\begin{equation}\label{A.3}
\partial_t P_t^\lambda (f)
= - h_\lambda\Big(z e^{-tz}h_\lambda(f)(z)\Big), \quad t \in (0,\infty).
\end{equation}

Indeed, since the function $\sqrt{z} J_\lambda(z)$ is bounded on $(0,\infty)$, we get
\begin{align*}
& \int_0^\infty |\sqrt{yz}  J_{\lambda-1/2}(yz) e^{-tz}h_\lambda(f)(z) |
\, dz 
 \leq C \int_0^\infty  e^{-tz} |h_\lambda(f)(z) |\, dz \\
& \qquad \qquad  \leq C \Big(\int_0^\infty e^{-2tz} \, dz \Big)^{1/2} \|h_\lambda(f)\|_{L^2(0,\infty)} \leq \frac{C}{t^{1/2}} \|f\|_{L^2(0,\infty)} < \infty,  \quad y, t \in (0,\infty),
\end{align*}
which allows us to establish \eqref{D7}. In analogous way the differentiation under the integral sign in \eqref{A.3} can be justified. 

Also, since (see \cite[(5.3.5)]{Le}), 
$$
\partial_y [(yz)^{-\nu} J_\nu(yz)]
= -z(yz)^{-\nu} J_{\nu+1}(yz), \quad y,z  \in (0,\infty),
$$
it follows that
\begin{align}\label{DyKernel}
D_{\lambda ,y}[P_t^\lambda (x,y)]
& = y^\lambda \partial_y
\int_0^\infty (yz)^{-\lambda + 1/2} J_{\lambda-1/2}(yz) z^\lambda e^{-tz} \sqrt{xz} J_{\lambda-1/2}(xz)  \, dz \nonumber\\
& =- 
\int_0^\infty \sqrt{yz} J_{\lambda+1/2}(yz) z e^{-tz} \sqrt{xz} J_{\lambda-1/2}(xz)  \, dz \nonumber\\
& = -h_{\lambda +1}\Big( z e^{-tz} \sqrt{xz} J_{\lambda-1/2}(xz) \Big) (y), \quad x,y, t \in (0,\infty),
\end{align}
and, for $f\in L^2(0,\infty )$,
\begin{equation}\label{DyPoisson}
D_{\lambda ,y}[ P_t^\lambda(f)(y)]
= -h_{\lambda +1}\Big( z e^{-tz} h_\lambda(f)(z) \Big) (y), \quad y,t \in (0,\infty).
\end{equation}
Differentiation under the integral sign can be justified as above. 

On the other hand, since $h_\lambda(f) \in L^2(0,\infty)$, the dominated convergence theorem implies that 
$$
\lim_{t \to 0^+}tz e^{-tz} h_\lambda(f)(z) = \lim_{t \to +\infty}tz e^{-tz} h_\lambda(f)(z) =\lim_{t \to +\infty}e^{-tz}h_\lambda(f)(z)=0, \quad 
\text{ in } L^2(0,\infty),
$$
and
$$
\lim_{t \to 0^+}e^{-tz}h_\lambda(f)(z)=h_\lambda(f)(z),\quad \text{ in } L^2(0,\infty).
$$

Then, from \eqref{D7}, \eqref{A.3} and the $L^2$-boundedness of $h_\lambda$ we get that
\begin{equation}\label{D8}
\lim_{t\to 0^+}t \partial_t (P_{t}^\lambda(f)(z))
= \lim_{t\to +\infty}t \partial_t (P_{t}^\lambda(f)(z))=\lim_{t\to +\infty} P_{t}^\lambda(f)(z)=0, \quad \text{ in } L^2(0,\infty),
\end{equation}
and
\begin{equation}\label{A.4.1}
\lim_{t\to 0^+}P_t^\lambda(f)(z)=f, \quad \text{ in } L^2(0,\infty).
\end{equation}
Our next objective is to prove the following lemma. 
\begin{lem}
Let $f\in L^2(0,\infty)$. Then,
\begin{equation}\label{A.2}
f(x)
= 2 \lim_{\eps \to 0^+} \int_{\eps}^{1/\eps} \int_0^\infty t \nabla_{\lambda,y} (P_t^\lambda (x,y)) \cdot \nabla_{\lambda,y}(P_t^\lambda (f)(y)) \, dy \, dt, 
\end{equation}
where the equality is understood in $L^2(0,\infty)$ and also in a distributional sense.
\end{lem}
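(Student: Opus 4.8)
The plan is to transfer everything to the Hankel transform side, where the two components of $\nabla_\lambda$ decouple and contribute identically, the $y$-integral becomes an $L^2$-pairing treated by Plancherel, and the remaining scalar integral reconstructs $f$ via the inversion formula $h_\lambda\circ h_\lambda=\mathrm{id}$ on $L^2(0,\infty)$ (which in fact also follows from \eqref{D7} and \eqref{A.4.1}: as $t\to0^+$, $e^{-tz}h_\lambda(f)(z)\to h_\lambda(f)(z)$ in $L^2$, and $h_\lambda$ is a continuous isometry).

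First, for fixed $x,t\in(0,\infty)$, I would introduce $g_{x,t}(z):=z e^{-tz}\sqrt{xz}\,J_{\lambda-1/2}(xz)$ and $g_{f,t}(z):=z e^{-tz}h_\lambda(f)(z)$, both of which lie in $L^2(0,\infty)$: $g_{x,t}$ because $\sqrt{w}\,J_{\lambda-1/2}(w)$ is bounded on $(0,\infty)$ and is $O(w^\lambda)$ as $w\to0^+$, so that $g_{x,t}(z)=O(z^{\lambda+1})$ near $z=0$ and decays exponentially at infinity; and $g_{f,t}$ because $h_\lambda(f)\in L^2(0,\infty)$ and $z e^{-tz}$ is bounded. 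By \eqref{A.4} and \eqref{A.3} one has $\partial_t P_t^\lambda(x,\cdot)=-h_\lambda(g_{x,t})$ and $\partial_t P_t^\lambda(f)=-h_\lambda(g_{f,t})$, while by \eqref{DyKernel} and \eqref{DyPoisson} one has $D_{\lambda,y}P_t^\lambda(x,\cdot)=-h_{\lambda+1}(g_{x,t})$ and $D_{\lambda,y}P_t^\lambda(f)=-h_{\lambda+1}(g_{f,t})$ — the very same pair $g_{x,t},g_{f,t}$, now transformed by $h_{\lambda+1}$ instead of $h_\lambda$. Since $h_\lambda$ and $h_{\lambda+1}$ are both isometries of $L^2(0,\infty)$, Plancherel turns the inner $y$-integral into
$$
\int_0^\infty\nabla_{\lambda,y}(P_t^\lambda(x,y))\cdot\nabla_{\lambda,y}(P_t^\lambda(f)(y))\,dy
=2\int_0^\infty g_{x,t}(z)\,g_{f,t}(z)\,dz
=2\int_0^\infty z^2e^{-2tz}\sqrt{xz}\,J_{\lambda-1/2}(xz)\,h_\lambda(f)(z)\,dz .
$$
The factor $2$ here (equal contributions of $\partial_t$ and $D_{\lambda,y}$) is what eventually makes the constant in \eqref{A.2} come out correctly.

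Next I would multiply by $t$ and integrate over $t\in[\eps,1/\eps]$. Using the boundedness of $\sqrt{xz}\,J_{\lambda-1/2}(xz)$, Cauchy--Schwarz in $z$, and $\|z^2e^{-2tz}\|_{L^2(dz)}\le Ct^{-5/2}$, the double integral is absolutely convergent, since $\int_\eps^{1/\eps}\int_0^\infty t z^2 e^{-2tz}\,|\sqrt{xz}\,J_{\lambda-1/2}(xz)\,h_\lambda(f)(z)|\,dz\,dt\le C\|f\|_{L^2(0,\infty)}\int_\eps^{1/\eps}t^{-3/2}\,dt<\infty$, so Fubini's theorem gives
$$
\int_\eps^{1/\eps}\!\int_0^\infty t\,\nabla_{\lambda,y}(P_t^\lambda(x,y))\cdot\nabla_{\lambda,y}(P_t^\lambda(f)(y))\,dy\,dt
=2\int_0^\infty\Phi_\eps(z)\,\sqrt{xz}\,J_{\lambda-1/2}(xz)\,h_\lambda(f)(z)\,dz
=2\,h_\lambda\!\big(\Phi_\eps\,h_\lambda(f)\big)(x),
$$
where $\Phi_\eps(z):=\int_\eps^{1/\eps}t z^2 e^{-2tz}\,dt=\tfrac14\int_{2\eps z}^{2z/\eps}s e^{-s}\,ds$. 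One checks at once that $0\le\Phi_\eps\le\tfrac14$, that $\Phi_\eps(z)=O(z^2)$ near $0$ and is exponentially small at infinity (hence $\Phi_\eps\,h_\lambda(f)\in L^1(0,\infty)\cap L^2(0,\infty)$, so the last member above is a genuine integral), and that $\Phi_\eps(z)\to\tfrac14$ as $\eps\to0^+$ for every $z>0$.

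Finally I would let $\eps\to0^+$: since $|\Phi_\eps\,h_\lambda(f)-\tfrac14 h_\lambda(f)|\le\tfrac14|h_\lambda(f)|\in L^2(0,\infty)$ and the difference tends to $0$ pointwise, dominated convergence yields $\Phi_\eps\,h_\lambda(f)\to\tfrac14 h_\lambda(f)$ in $L^2(0,\infty)$, whence, by the isometry of $h_\lambda$ and the inversion $h_\lambda(h_\lambda(f))=f$, $2\,h_\lambda(\Phi_\eps h_\lambda(f))\to\tfrac12 f$ in $L^2(0,\infty)$. Multiplying by the external factor $2$ gives \eqref{A.2} in $L^2(0,\infty)$, and since convergence in $L^2(0,\infty)$ implies convergence when tested against any element of $L^2(0,\infty)$, in particular against test functions, the distributional statement follows too. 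The only places demanding care are the $L^2$-membership of $g_{x,t}$ and the absolute convergence used for Fubini, both governed by the boundedness and the $O(w^\lambda)$ behaviour near the origin of $\sqrt{w}\,J_{\lambda-1/2}(w)$; I do not expect any serious obstacle beyond this bookkeeping.
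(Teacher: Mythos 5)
Your argument is correct, and its first half is exactly the paper's: both proofs use \eqref{A.4}, \eqref{A.3}, \eqref{DyKernel}, \eqref{DyPoisson} and the Plancherel identities for $h_\lambda$ and $h_{\lambda+1}$ to show that the $\partial_t$ and $D_{\lambda,y}$ components of the inner $y$-integral each equal $\int_0^\infty z^2e^{-2tz}\sqrt{xz}\,J_{\lambda-1/2}(xz)\,h_\lambda(f)(z)\,dz$, which is where the factor $2$ comes from. You diverge only in how the $t$-integral is then evaluated. The paper recognizes that common value as $\tfrac14\partial_t^2[P_{2t}^\lambda(f)(x)]$, integrates by parts in $t$ over $[\eps,1/\eps]$, and invokes the boundary limits \eqref{D8} and \eqref{A.4.1} to extract $f(x)/2$. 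You instead stay on the Hankel-transform side: Fubini (justified by your Cauchy--Schwarz bound $\|z^2e^{-2tz}\|_{L^2(dz)}\le Ct^{-5/2}$) lets you compute the multiplier $\Phi_\eps(z)=\tfrac14\int_{2\eps z}^{2z/\eps}se^{-s}\,ds$ explicitly, and dominated convergence plus the isometry and inversion $h_\lambda\circ h_\lambda=\mathrm{id}$ give the $L^2$ limit. The two finishes are essentially equivalent in content --- the limits \eqref{D8} and \eqref{A.4.1} that the paper uses are themselves proved by dominated convergence on the Hankel side --- but yours is slightly more self-contained, trading the integration by parts and boundary-term analysis for an explicit evaluation of the spectral multiplier; the paper's version has the advantage of reusing facts (\eqref{D7}--\eqref{A.4.1}) already recorded for later use. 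Your closing remark that $L^2$ convergence implies distributional convergence matches the paper's. No gaps.
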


\begin{proof}

By using \eqref{A.4} and \eqref{A.3}, Plancherel inequality for $h_\lambda$ leads to
\begin{align*}
\int_0^\infty \partial_t P_t^\lambda (x,y) \, \partial_t P_t^\lambda(f)(y) dy
& = \int_0^\infty h_\lambda\Big(z e^{-tz}\sqrt{xz} J_{\lambda-1/2}(xz) \Big)(y) \, h_\lambda\Big(z e^{-tz}h_\lambda(f)(z)\Big)(y) dy \\
& = \int_0^\infty z^2 e^{-2tz}\sqrt{xz} J_{\lambda-1/2}(xz)h_\lambda(f)(z) dz \\
& = \frac{1}{4} \partial_t^2 [P_{2t}^\lambda(f)(x)], \quad x, t \in (0,\infty).
\end{align*}

In analogous way from \eqref{DyKernel}, \eqref{DyPoisson} and Plancherel equality for $h_{\lambda + 1}$ we obtain
\begin{align*}
& \int_0^\infty 
D_{\lambda ,y}[P_t^\lambda (x,y)] \, 
D_{\lambda ,y}[P_t^\lambda(f)(y)] \, dy \\
& \qquad \qquad = 
\int_0^\infty h_{\lambda+1}\Big(z e^{-tz}\sqrt{xz} J_{\lambda-1/2}(xz) \Big)(y) \, h_{\lambda+1}\Big(z e^{-tz}h_\lambda(f)(z)\Big)(y) dy \\
& \qquad \qquad  = \int_0^\infty z^2 e^{-2tz} h_\lambda(f)(z) \sqrt{xz} J_{\lambda-1/2}(xz) dz \\
& \qquad \qquad  = \frac{1}{4} \partial_t^2 [P_{2t}^\lambda(f)(x)], \quad x, t \in (0,\infty).
\end{align*}

By partial integration we obtain
\begin{align*}
&\int_{\eps}^{1/\eps} \int_0^\infty t \nabla_{\lambda,y} (P_t^\lambda (x,y)) \cdot \nabla_{\lambda,y}(P_t^\lambda (f)(y))\, dy \, dt
 = \frac{1}{2} \int_{\eps}^{1/\eps} t \partial_t^2 [P_{2t}^\lambda(f)(x)] \, dt \\
& \qquad \qquad  = \frac{1}{2}  \Big\{ 
t \partial_t (P_{2t}^\lambda(f)(x)) \Big]_{t=\eps}^{t=1/\eps} 
- P_{2t}^\lambda(f)(x)\Big]_{t=\eps}^{t=1/\eps} \Big\},
\quad x \in (0,\infty) \text{ and } 0<\eps<1.
\end{align*}
We conclude from \eqref{D8} and \eqref{A.4.1} that
$$
\lim_{\eps \to 0^+} \int_{\eps}^{1/\eps} \int_0^\infty t \nabla_{\lambda,y} (P_t^\lambda (x,y)) \cdot \nabla_{\lambda,y}(P_t^\lambda (f)(y)) \, dy \, dt
= \frac{f(x)}{2}, \quad
\text{in } L^2(0,\infty),
$$
and then, also, in a distributional sense.
\end{proof}

\section{$BMO$ spaces associated with Bessel operators}\label{Sect3}
In this section we establish some properties for the functions in the space $BMO_{\textrm{o}}(\mathbb{R})$ that will be useful in the proof of Theorem \ref{Th1.3}.

As it was mentioned in the introduction, in \cite{BDT} Hardy spaces associated with Bessel operators $B_\lambda $ were introduced by using maximal operators. A function $f\in L^1(0,\infty )$ is in the Hardy space $H_\lambda ^1(0,\infty )$ provided that the maximal function $P_*^\lambda (f) \in L^1 (0,\infty )$. Here, $P_*^\lambda $ is defined by
$$
P_*^\lambda (f)
:=\sup_{t>0}|P_t^\lambda (f)|,\quad f\in L^1(0,\infty ).
$$
According to \cite[Theorem 1.10 and Proposition 3.8]{BDT} $H_\lambda^1(0,\infty )$ can be also defined by using the maximal operator associated to the heat semigroup $\{W_t^\lambda \}_{t>0}$ generated by $-B_\lambda$.

The area integral defined by the Poisson semigroup $\{P_t^\lambda\}_{t>0}$, $g_\lambda (f)$ of $f\in L^1 (0,\infty )$ is defined by
$$
g_\lambda (f)(x)
:=\Big(\int_{\Gamma _+(x)}|t\partial _tP_t^\lambda (f)(y)|^2\frac{dtdy}{t^2}\Big)^{1/2},\quad x\in (0,\infty ),
$$
where $\Gamma _+(x):=\{(y,t)\in (0,\infty )\times (0,\infty ): |x-y|<t\}$, $x\in (0,\infty )$. In \cite[Proposition 4.1]{BCFR1} it was proved that $f\in L^1(0,\infty )$ is in $H_\lambda^1(0,\infty )$ if and only if $g_\lambda (f)\in L^1(0,\infty )$. Actually, the space $H_\lambda ^1(0,\infty )$ does not depend on $\lambda $ because, according to \cite[Theorem 1.10]{BDT} and \cite[Theorem 2.1]{Fri}, a function $f\in L^1 (0,\infty )$ is in $H_\lambda ^1(0,\infty )$ when and only when the odd extension $f_{\textrm{o}}$ of $f$ to $\mathbb{R}$ is in the classical Hardy space $H^1(\mathbb{R})$. Other characterizations for the space $H_\lambda ^1(0,\infty )$ can be found in \cite{BDLWY} (even in the multiparametric case).

The dual space of $H_\lambda ^1(0,\infty )$ is the space $BMO_{\textrm{o}}$ (\cite[p. 466]{BCFR1}). By using duality and the description of $H_\lambda ^1(0,\infty )$ in terms of $g_\lambda$ we deduce a new characterization of $BMO_{\textrm{o}}(\mathbb{R})$.

According to (\ref{D3}) we have that $P_t^\lambda (f)(x)<\infty$, for every $x,t\in (0,\infty )$, provided that $f$ is a complex measurable function on $(0,\infty )$ such that
$$
\int_0^\infty \frac{|f(x)|}{(1+x)^2}dx<\infty .
$$
We say that a function $f\in L^1((0,\infty ),(1+x)^{-2}dx)$ is in $BMO(P^\lambda )$ when
$$
\|f\|_{BMO(P^\lambda )}:=\sup \frac{1}{|I|}\int_I|f(x)-P_{|I|}^\lambda (f)(x)|dx<\infty,
$$
where the supremum is taken over all bounded intervals $I$ in $(0,\infty )$.

We now characterize $BMO(P^\lambda )$ as the dual space of $H^1_\lambda (0,\infty )$. In order to do this we consider the odd-atoms introduced in \cite{Fri}. A measurable function $\mathfrak a$ on $(0,\infty )$ is an odd-atom when it satisfies one of the following properties:

$(a)$ $\mathfrak a=\frac{1}{\delta}\chi _{(0,\delta)}$, for some $\delta >0$. Here $\chi _{(0,\delta)}$ denotes the characteristic function of $(0,\delta)$, for every $\delta >0$.

$(b)$ There exists a bounded interval $I\subset (0,\infty)$ such that $\supp \mathfrak a\subset I$, $\int_I\mathfrak a(x)dx=0$ and $\|\mathfrak a\|_\infty \leq |I|^{-1}$.

We say that a function $f\in L^1(0,\infty )$ is in $H_{\textrm{o}, at}^1(0,\infty )$ when, for every $j\in\mathbb{N}$, there exist $\lambda _j>0$ and an odd-atom $\mathfrak a_j$ such that $f=\sum_{j\in \mathbb{N}}\lambda _j\mathfrak a_j$, in $L^1(0,\infty)$, and $\sum_{j\in \mathbb{N}}\lambda _j<\infty$. We define, for every $f\in H_{\textrm{o}, at}^1(0,\infty )$,
$$
\|f\|_{H_{\textrm{o}, at}^1(0,\infty )}
:=\inf\sum_{j\in \mathbb{N}}\lambda _j,
$$
where the infimum is taken over all the sequences $\{\lambda_j\}_{j\in \mathbb{N}}\subset (0,\infty )$ such that $\sum_{j\in\mathbb{N}}\lambda _j<\infty$ and $f=\sum_{j\in \mathbb{N}}\lambda_j\mathfrak a_j$, in $L^1(0,\infty )$, where $\mathfrak a_j$ is an odd-atom, for every $j\in \mathbb{N}$. 

According to \cite[Proposition 3.7]{BDT} we have that $H_{\textrm{o}, at}^1(0,\infty )=H_\lambda ^1(0,\infty )$ algebraic and topologically. Note that this equality implies that $\mathcal{A}=\mbox{span}\{\mbox{odd atoms}\}$ is a dense subspace of $H_\lambda ^1(0,\infty )$.

We now characterize $BMO(P^\lambda )$ as the dual space of $H_\lambda ^1(0,\infty )$.

\begin{prop}\label{Prop2.1}
Let $\lambda >0$.
\begin{itemize}
\item[$i)$] Let $f\in BMO(P^\lambda )$. We define the functional $T_f$ on $\mathcal{A}$ by
$$
T_f(b)
:=\int_0^\infty f(x)b(x)dx,\quad b\in \mathcal{A}.
$$
Then, $T_f$ can be extended to $H_\lambda ^1(0,\infty )$ as a bounded operator from $H_\lambda ^1(0,\infty )$ into $\mathbb{C}$. Furthermore,
$$
\|T_f\|_{(H_\lambda ^1(0,\infty ))'}\leq C\|f\|_{BMO(P^\lambda)},
$$
where $C>0$ does not depend on $f$.

\item[$ii)$] There exists $C>0$ such that, for every $T\in (H_\lambda ^1(0,\infty ))'$, there exists $f\in BMO(P^\lambda )$ such that $T=T_f$ on $\mathcal{A}$ and
$$
\|f\|_{BMO(P^\lambda )}\leq C\|T\|_{(H_\lambda ^1(0,\infty ))'}.
$$
\end{itemize}
\end{prop}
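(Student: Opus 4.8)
The plan is to identify $(H_\lambda^1(0,\infty))'$ with $BMO(P^\lambda)$ by first passing through the already–known duality $(H_\lambda^1(0,\infty))' = BMO_{\textrm{o}}(\mathbb{R})$ recorded in the introduction, and then showing that the two norms $\|\cdot\|_{BMO_{\textrm{o}}(\mathbb{R})}$ and $\|\cdot\|_{BMO(P^\lambda)}$ are equivalent on $L^1((0,\infty),(1+x)^{-2}\,dx)$. The Carleson-measure characterization in Theorem \ref{Th1.1} will be the bridge: a function $f\in L^1((0,\infty),(1+x)^{-2}\,dx)$ lies in $BMO_{\textrm{o}}(\mathbb{R})$ iff $d\mu_f = |t\partial_t P_t^\lambda(f)(x)|^2\,\tfrac{dx\,dt}{t}$ is Carleson, with $\|f\|_{BMO_{\textrm{o}}(\mathbb{R})}^2\sim\|\mu_f\|_{\mathcal{C}}$, so it suffices to prove that $f\in BMO(P^\lambda)$ with comparable norm exactly when $\mu_f$ is Carleson.

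For part $i)$, given $f\in BMO(P^\lambda)$ I would test $T_f$ against a single odd-atom $\mathfrak{a}$ and show $|T_f(\mathfrak{a})|\le C\|f\|_{BMO(P^\lambda)}$, uniformly; the general estimate on $\mathcal{A}$ and the extension to $H_\lambda^1(0,\infty)$ then follow from the atomic decomposition $H^1_{\textrm{o},at}=H_\lambda^1$ and density of $\mathcal{A}$. For a type-$(b)$ atom supported in $I$ with mean zero, write $\int_0^\infty f\mathfrak{a}=\int_I (f-P_{|I|}^\lambda(f))\mathfrak{a} + \int_I P_{|I|}^\lambda(f)\,\mathfrak{a}$. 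The first term is bounded by $\|\mathfrak{a}\|_\infty \int_I|f-P_{|I|}^\lambda(f)|\le\|f\|_{BMO(P^\lambda)}$ directly from the definition. For the second term I would use the cancellation $\int_I\mathfrak{a}=0$, writing $\int_I P_{|I|}^\lambda(f)(x)\mathfrak{a}(x)\,dx = \int_I(P_{|I|}^\lambda(f)(x)-P_{|I|}^\lambda(f)(x_I))\mathfrak{a}(x)\,dx$ for $x_I$ the center, and then bound the oscillation of $x\mapsto P_{|I|}^\lambda(f)(x)$ over $I$ via Lemma \ref{D4} ($|D_{\lambda,x}P_t^\lambda(x,y)|\le \tfrac{C}{t}P_t^\lambda(x,y)$) together with the crude kernel bound \eqref{D3}; this is the step that requires a genuine estimate — controlling $|P_{|I|}^\lambda(f)(x)-P_{|I|}^\lambda(f)(x')|$ for $x,x'\in I$ by splitting $f=f\chi_{2I}+f\chi_{(0,\infty)\setminus 2I}$, handling the local part with the $BMO(P^\lambda)$ bound on a chain of doubled intervals and the far part with the smoothness/decay of the Poisson kernel and the growth control $\tfrac1{|J|}\int_J|f|\lesssim (1+\log(\cdots))\|f\|_{BMO(P^\lambda)}$ that one extracts iteratively from the defining inequality. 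For a type-$(a)$ atom $\mathfrak{a}=\tfrac1\delta\chi_{(0,\delta)}$ the same argument applies with $I=(0,\delta)$, now using condition $b)$-type control near the origin that is built into $BMO(P^\lambda)$ through the interval $(0,\delta)$ itself.

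For part $ii)$, given $T\in(H_\lambda^1(0,\infty))'$ the known duality $(H_\lambda^1(0,\infty))'=BMO_{\textrm{o}}(\mathbb{R})$ produces $f\in BMO_{\textrm{o}}(\mathbb{R})\subset L^1((0,\infty),(1+x)^{-2}\,dx)$ with $T=T_f$ on $\mathcal{A}$ and $\|f\|_{BMO_{\textrm{o}}(\mathbb{R})}\le C\|T\|$; it then remains to show $\|f\|_{BMO(P^\lambda)}\le C\|f\|_{BMO_{\textrm{o}}(\mathbb{R})}$. Fix a bounded interval $I$. I would estimate $\tfrac1{|I|}\int_I|f-P_{|I|}^\lambda(f)|$ by inserting $f_I$: $|f(x)-P_{|I|}^\lambda(f)(x)|\le |f(x)-f_I| + |P_{|I|}^\lambda(f)(x)-P_{|I|}^\lambda(f_I\chi_{\cdots})(x)| + \dots$, i.e. decompose $f-f_I = (f-f_I)\chi_{2I} + (f-f_I)\chi_{(0,\infty)\setminus 2I}$. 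The term $\tfrac1{|I|}\int_I|f-f_I|$ is $\le\|f\|_{BMO_{\textrm{o}}(\mathbb{R})}$ by definition. For $P_{|I|}^\lambda\big((f-f_I)\chi_{2I}\big)$ use $\|P_t^\lambda\|_{L^1\to L^1}$-type bounds together with $\int_{2I}|f-f_I|\lesssim |I|\|f\|_{BMO_{\textrm{o}}(\mathbb{R})}$; for the tail $P_{|I|}^\lambda\big((f-f_I)\chi_{(0,\infty)\setminus 2I}\big)(x)$, $x\in I$, use \eqref{D3} and a dyadic decomposition of $(0,\infty)\setminus 2I$ into annuli $2^{k+1}I\setminus 2^kI$, on each of which $\int|f-f_I|\lesssim k|2^kI|\|f\|_{BMO_{\textrm{o}}(\mathbb{R})}$ while the kernel contributes a factor $\lesssim |I|/(2^k|I|)^2$, so the series converges. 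One must also handle $P_{|I|}^\lambda(f_I)(x)$; since $\{P_t^\lambda\}$ is not Markovian ($P_t^\lambda(1)\ne 1$) this is $f_I\cdot P_{|I|}^\lambda(1)(x)$, and one needs $|P_{|I|}^\lambda(1)(x)-1|$ and $|f_I|$ under control — here condition $b)$ in the definition of $BMO_{\textrm{o}}(\mathbb{R})$, $\tfrac1a\int_0^a|f|\le\|f\|_{BMO_{\textrm{o}}(\mathbb{R})}$, is exactly what bounds $|f_I|$ when $I$ reaches near the origin, and \eqref{D3} gives the Poisson-mass estimate elsewhere. The non-Markovian correction and the interaction with the boundary $x=0$ is, I expect, the main obstacle, and it is precisely where the structure of $BMO_{\textrm{o}}(\mathbb{R})$ (as opposed to plain $BMO$) and the kernel bound \eqref{D3} must be used carefully; everything else reduces to the now-standard $BMO$/Carleson machinery.
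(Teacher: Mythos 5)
Your part $ii)$ follows the same route as the paper (invoke the known duality $(H_\lambda^1(0,\infty))'=BMO_{\textrm{o}}(\mathbb{R})$, then prove the embedding $BMO_{\textrm{o}}(\mathbb{R})\subset BMO(P^\lambda)$ via the splitting into $\frac{1}{|I|}\int_I|f-f_I|$, the term $P_{|I|}^\lambda(f-f_I)$, and the non-Markovian correction $f_I(P_{|I|}^\lambda(1)-1)$), and you correctly locate the difficulty in the last term. But your claim that \eqref{D3} ``gives the Poisson-mass estimate elsewhere'' is not enough. Estimate \eqref{D3} only yields $|P_{|I|}^\lambda(1)(x)-1|\leq C$, which suffices when $x_I\leq |I|$ (there condition $b)$ gives $|f_I|\leq C\|f\|_{BMO_{\textrm{o}}(\mathbb{R})}$). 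When $x_I>|I|$ one only has $|f_I|\leq C\frac{x_I+|I|}{|I|}\|f\|_{BMO_{\textrm{o}}(\mathbb{R})}$, so a uniform bound on $|P_{|I|}^\lambda(1)(x)-1|$ is useless; one needs the decay $|P_{|I|}^\lambda(1)(x)-1|\leq C|I|/x$, and this is precisely the technical heart of the paper's proof: a careful comparison of $P_t^\lambda(x,y)$ with the classical Poisson kernel $P_t(x-y)$ through the decomposition into the pieces $I_1,I_2,I_4$ and $P_t^{\lambda,2}$ in \eqref{decomposition}. That estimate does not follow from the crude upper bound \eqref{D3} and is missing from your sketch.

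For part $i)$ your route is genuinely different from the paper's and has a more serious gap. The paper never tests $T_f$ on individual atoms: it shows that $d\rho_f=|t(\partial_tP_t^\lambda)(i_d-P_t^\lambda)f|^2\,dx\,dt/t$ is Carleson (following Duong--Yan), pairs this with the area integral of $b$ through the operators $\Phi$ and $\Psi$ of \cite{BCFR1}, and closes the argument with a Calder\'on-type reproducing formula producing $\frac{5}{36}\int_0^\infty fb$. Your atom-testing argument instead requires pointwise control of $P_{|I|}^\lambda(f)(x)-P_{|I|}^\lambda(f)(x_I)$ on $I$ and growth bounds of the type $\frac{1}{|J|}\int_J|f|\lesssim(1+\log(\cdot))\|f\|_{BMO(P^\lambda)}$. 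These are \emph{not} extractable ``iteratively from the defining inequality'' in any obvious way: the defining quantity compares $f$ with the non-constant function $P_{|I|}^\lambda(f)$, so one cannot pass to $\frac{1}{|I|}\int_I|f-f_I|$ by the usual trick of replacing a constant by the optimal one, and deriving such doubling estimates amounts to proving $BMO(P^\lambda)\subset BMO_{\textrm{o}}(\mathbb{R})$ --- which in the paper is a \emph{consequence} of Proposition \ref{Prop2.1} (Corollary \ref{cor2.1}), so your argument risks circularity unless you supply an independent proof. In addition, even granting a uniform bound $|T_f(\mathfrak a)|\leq C\|f\|_{BMO(P^\lambda)}$ on atoms, passing to $|T_f(b)|\leq C\|f\|_{BMO(P^\lambda)}\|b\|_{H^1_\lambda(0,\infty)}$ for $b\in\mathcal{A}$ requires comparing the finite atomic norm with the infimum over all (possibly infinite, merely $L^1$-convergent) decompositions; this step is a known subtlety of the atomic approach and needs justification. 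The paper's tent-space/reproducing-formula argument is designed exactly to sidestep both issues.
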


\begin{proof}[Proof of Proposition \ref{Prop2.1}, $i)$]
Since $f\in BMO(P^\lambda )$, we can affirm that the measure $\rho _f$ on $(0,\infty )\times (0,\infty )$ defined by
$$
d\rho _f(x,t)
:=|t(\partial _tP_t^\lambda )(i_d -P_t^\lambda )f(x)|^2\frac{dxdt}{t},
$$
where $i_d$ represents the identity operator, is Carleson and 
$$
\|\rho _f\|_{\mathcal{C}}\leq C\|f\|^2_{BMO(P^\lambda )},
$$
with $C>0$.

In order to prove this assertion, we can proceed as in the proof of \cite[Lemma 4.6]{DY}. Indeed, it is sufficient to see that, there exists $C>0$ such that, for every bounded interval $I\subset (0,\infty )$ we have that
\begin{equation}\label{2.1}
\int_{\widehat{I}}|t(\partial_tP_t^\lambda) (i_d-P_t^\lambda )(i_d-P_{|I|}^\lambda)f(x)|^2\frac{dxdt}{t}\leq C|I|\|f\|_{BMO(P^\lambda )}^2,
\end{equation}
and
\begin{equation}\label{2.2}
\int_{\widehat{I}}|t(\partial_tP_t^\lambda )(i_d-P_t^\lambda )P_{|I|}^\lambda (f)(x)|^2\frac{dxdt}{t}\leq C|I|\|f\|_{BMO(P^\lambda )}^2.
\end{equation}
We consider the Littlewood-Paley type function $G_\lambda$ defined by
$$
G_\lambda (g)(x)
:=\Big(\int_0^\infty |t(\partial _tP_t^\lambda )(i_d-P_t^\lambda )g(x)|^2\frac{dt}{t}\Big)^{1/2},\quad x\in (0,\infty ).
$$

Let $g\in L^2(0,\infty )$. According to \eqref{D7}, \eqref{A.3} and the fact that $h_\lambda ^2=i_d$, we can write
$$
t(\partial _tP_t^\lambda )(i_d-P_t^\lambda )g=-h_\lambda (tze^{-tz}(1-e^{-tz})h_\lambda (g)),\quad t>0.
$$
Then, since $h_\lambda $ is a bounded operator from $L^2(0,\infty )$ into itself, we get
\begin{align*}
\|G_\lambda (g)\|_2
&=\Big(\int_0^\infty \int_0^\infty |h_\lambda (tze^{-tz}(1-e^{-tz})h_\lambda (g))(x)|^2dx\frac{dt}{t}\Big)^{1/2}\\
&\leq C\Big(\int_0^\infty \int_0^\infty |tze^{-tz}(1-e^{-tz})|^2\frac{dt}{t}|h_\lambda (g)(z)|^2dz\Big)^{1/2}\\
&\leq C\|g\|_2.
\end{align*}
Hence, the sublinear operator $G_\lambda$ is bounded from $L^2(0,\infty )$ into itself. 

Let $I$ be a bounded interval in $(0,\infty )$. 

We now decompose the function $(i_d-P_{|I|}^\lambda )f$ as follows:
$$
(i_d-P_{|I|}^\lambda )f=\chi _{2I}(i_d-P_{|I|}^\lambda )f+\chi _{(0,\infty )\setminus 2I}(i_d-P_{|I|}^\lambda )f=:g_1+g_2.
$$
The arguments in \cite[p. 956]{DY} (see also \cite{DY2}) allow us to obtain
\begin{equation}\label{2.3}
\int_{\widehat{I}}|t(\partial _tP_t^\lambda )(i_d-P_t^\lambda )(g_1)(x)|^2\frac{dxdt}{t}\leq \|G_\lambda (g_1)\|_2^2\leq C\|g_1\|_2^2\leq C|I|\|f\|_{BMO(P^\lambda )}^2,
\end{equation}
and also, by using Lemma \ref{D4} and \eqref{D3}, that 
$$
|t(\partial _tP_t^\lambda )(i_d-P_t^\lambda )(g_2)(x)|\leq C\frac{t}{|I|}\|f\|_{BMO(P^\lambda )}.
$$
Then, we get
\begin{equation}\label{2.4}
\int_{\widehat{I}}|t(\partial _tP_t^\lambda )(i_d-P_t^\lambda )(g_2)(x)|^2\frac{dxdt}{t}\leq C|I|\|f\|_{BMO(P^\lambda )}^2.
\end{equation}
Inequality \eqref{2.1} follows now from \eqref{2.3} and \eqref{2.4}.

According again to Lemma \ref{D4}, \eqref{D3}  and by proceeding as in the bottom of \cite[p. 956]{DY} we obtain
$$
|t(\partial _tP_t^\lambda )(i_d-P_t^\lambda )P_{|I|}^\lambda (f)(x)|\leq C\frac{t}{|I|}\|f\|_{BMO(P^\lambda )},
$$
and \eqref{2.2} can be established.

If $F$ is a measurable function on $(0,\infty )\times (0,\infty )$ we define (see \cite[p. 488]{BCFR1})
$$
\Phi (F)(x)
:=\sup_{I\subset (0,\infty ), I \;{\rm bounded},x\in I}
\Big(\frac{1}{|I|}\int_0^{|I|}\int_I|F(y,t)|^2\frac{dydt}{t}\Big)^{1/2},\quad x\in (0,\infty ),
$$
and
$$
\Psi (F)(x)
:=\Big(\int_{\Gamma_+(x)}|F(y,t)|^2\frac{dydt}{t^2}\Big)^{1/2},\quad x\in (0,\infty ).
$$

Suppose that $b\in \mathcal{A}$ and consider $F(x,t):=t(\partial _tP_t^\lambda )(i_d-P_t^\lambda )f$ and $G(x,t):=t\partial _tP_t^\lambda (b)(x)$, $x,t\in (0, \infty )$. Since $\rho _f $ is a Carleson measure we have that $\Phi (F)\in L^\infty (0,\infty )$ and 
$$
\|\Phi (F)\|_\infty \leq C\|\rho _f\|_{\mathcal{C}}^{1/2}.
$$

On the other hand, from \cite[Proposition 4.1]{BCFR1}, we have that $\Psi (G)\in L^1(0,\infty )$ and $\|\Psi(G)\|_1\leq C\|b\|_{H_\lambda ^1(0,\infty )}$. Then, according to \cite[Proposition 4.3]{BCFR1} we get
\begin{align}\label{2.5}
&\int_0^\infty \int_0^\infty |t (\partial _tP_t^\lambda )(i_d-P_t^\lambda )f(y)||t\partial _tP_t^\lambda (b)(y)|\frac{dydt}{t}\nonumber\\
& \qquad \leq C\int_0^\infty \Phi (F)(y)\Psi (G)(y)dy \leq C\|f\|_{BMO(P^\lambda )}\|b\|_{H^1_\lambda (0,\infty )}.
\end{align}

It follows that
$$
\int_0^\infty \int_0^\infty t(\partial _tP_t^\lambda )(i_d-P_t^\lambda )f(y)t\partial _tP_t^\lambda (b)(y)\frac{dydt}{t}=\lim_{\varepsilon \rightarrow 0^+,N\rightarrow \infty }H(\varepsilon ,N),
$$
where, for every $0<\varepsilon <N<\infty$,
$$
H(\varepsilon ,N)
:=\int_\varepsilon ^N \int_0^\infty t(\partial _tP_t^\lambda )(i_d-P_t^\lambda )f(y)t\partial _tP_t^\lambda (b)(y)\frac{dydt}{t}.
$$
We can write 
$$
t(\partial _tP_t^\lambda )(i_d-P_t^\lambda )=t\partial _tP_t^\lambda -\frac{1}{2}t\partial _tP_{2t}^\lambda ,\quad t>0.
$$
Since $f\in L^1((0,\infty ), (1+x)^{-2}dx)$ and $b\in \mathcal{A}$, by using \cite[(4.8)]{BCFR1} we deduce that
\begin{align*}
&\int_0^\infty t(\partial _tP_t^\lambda )(i_d-P_t^\lambda )f(y)\partial _tP_t^\lambda (b)(y)dy\\
&\qquad \qquad =\int_0^\infty f(z)\int_0^\infty t\Big(\partial _tP_t^\lambda (y,z)-\frac{1}{2}\partial _tP_{2t}^\lambda (y,z)\Big)\partial _tP_t^\lambda (b)(y)dydz,\quad t>0,
\end{align*}
and, for every $0<\varepsilon<N<\infty$,
$$
H(\varepsilon ,N)=\int_0^\infty f(z)\int_\varepsilon ^N\int_0^\infty t\Big(\partial _tP_t^\lambda (y,z)-\frac{1}{2}\partial _tP_{2t}^\lambda (y,z)\Big)\partial _tP_t^\lambda (b)(y)dydtdz.
$$
According to \cite[(4.15)]{BCFR1} we obtain
$$
\lim_{\varepsilon \rightarrow 0^+, N\rightarrow \infty }\int_\varepsilon ^N\int_0^\infty 
t\partial _tP_t^\lambda (y,z)\partial _tP_t^\lambda (b)(y)dydt=\frac{b(z)}{4},\quad \mbox{ in }L^2(0,\infty ).
$$
In a similar way we can see that
$$
\lim_{\varepsilon \rightarrow 0^+, N\rightarrow \infty }\int_\varepsilon ^N\int_0^\infty 
t\partial _tP_{2t}^\lambda (y,z)\partial _tP_t^\lambda (b)(y)dydt=\frac{2b(z)}{9},\quad \mbox{ in }L^2(0,\infty ).
$$
Then,
$$
\lim_{\varepsilon \rightarrow 0^+, N\rightarrow \infty }\int_\varepsilon ^N\int_0^\infty
t\Big(\partial _tP_t^\lambda (y,z)-\frac{1}{2}\partial _tP_{2t}^\lambda (y,z)\Big)\partial _tP_t^\lambda (b)(y)dydt=\frac{5b(z)}{36},
$$
in $L^2(0,\infty )$.
By using now dominated convergence theorem as in \cite[p. 492]{BCFR1} we conclude that
\begin{equation}\label{2.6}
\int_0^\infty \int_0^\infty t\partial _t(P_t^\lambda )(i_d-P_t^\lambda )f(x)\partial _tP_t^\lambda (b)(x)\frac{dxdt}{t}=\frac{5}{36}\int_0^\infty f(x)b(x)dx.
\end{equation}
By combining \eqref{2.5} and \eqref{2.6} we get
$$
|T_f(b)|=\Big|\int_0^\infty f(x)b(x)dx\Big|
\leq C\|f\|_{BMO(P^\lambda )}\|b\|_{H_\lambda ^1(0,\infty )}.
$$
\end{proof}

\begin{proof}[Proof of Proposition \ref{Prop2.1}, $ii)$]
Assume that $T\in (H_\lambda ^1(0,\infty ))'$. There exists $f\in BMO_{\textrm{o}}(\mathbb{R})$ such that $Tg=T_fg$, for every $g\in \mathcal{A}$, and $\|f\|_{BMO_{\textrm{o}}(\mathbb{R})}\leq C\|T\|_{(H_\lambda ^1(0,\infty ))'}$ (\cite[p. 466]{BCFR1} and \cite[Theorem 1.10]{BDT}). We are going to see that $f\in BMO(P^\lambda )$.

Let $I$ be a bounded interval in $(0,\infty )$. We can write
\begin{align*}
\frac{1}{|I|}\int_I|f(x)-P_{|I|}^\lambda (f)(x)|dx&\leq \frac{1}{|I|}\int_I|f(x)-f_I|dx+\frac{1}{|I|}\int_I|P_{|I|}^\lambda (f-f_I)(x)|dx\\
&\quad +\frac{|f_I|}{|I|}\int_I|P_{|I|}^\lambda (1)(x)-1|dx=:J_1+J_2+J_3.
\end{align*}
Since $f\in BMO_{\textrm{o}}(\mathbb{R})$, $J_1\leq \|f\|_{BMO_{\textrm{o}}(\mathbb{R})}$. 
According to \eqref{D3} we have that
\begin{align}\label{3.6.1}
|P_{|I|}^\lambda (f-f_I)(x)|
&\leq C\int_0^\infty \frac{|I|}{|x-y|^2+|I|^2}|f(y)-f_I|dy \nonumber\\
& \leq C \Big(\int_{(0,\infty )\cap (x-|I|/2,x+|I|/2)}+\sum_{k\in \mathbb{N}}\int_{(0,\infty )\cap (B(x,2^k|I|)\setminus B(x,2^{k-1}|I|)}\Big)\frac{|I||f(y)-f_I|}{(x-y)^2+|I|^2}dy \nonumber\\
&  \leq C\Big(\frac{1}{|I|}\int_{(0,\infty )\cap (2I)}|f(y)-f_I|dy+\sum_{k\in \mathbb{N}}\frac{1}{2^{2k}|I|}\int_{(0,\infty )\cap 2^{k+2}I}|f(y)-f_I|dy\Big) \nonumber\\
& \leq C\|f\|_{BMO_{\textrm{o}}(\mathbb{R})}\Big(1+\sum_{k\in \mathbb{N}}\frac{k}{2^k}\Big)\leq C\|f\|_{BMO_{\textrm{o}}(\mathbb{R})},\quad x\in I.
\end{align}
Then, $J_2\leq C\|f\|_{BMO_{\textrm{o}}(\mathbb{R})}$.

In order to estimate $J_3$ we distinguish two cases. 
We consider firstly that $x_I\leq |I|$. According to (\ref{D3}) we get
\begin{equation}\label{dif1}
|P_{|I|}^\lambda (1)(x)-1|\leq C\int_0^\infty \frac{|I|}{(x-y)^2+|I|^2}dy +1\leq C,\quad x\in (0,\infty ).
\end{equation}
Then
\begin{equation}\label{J3}
J_3\leq C|f_I|\leq \frac{C}{|I|}\int_I|f(y)|dy\leq C\frac{x_I+|I|}{|I|(x_I+|I|)}\int_0^{x_I+|I|}|f(y)|dy\leq C\|f\|_{BMO_{\textrm{o}}(\mathbb{R})}.
\end{equation}

Suppose now that $I=(x_I-|I|/2, x_I+|I|/2)$, where $x_I>|I|$. We write
\begin{align*}
P_{|I|}^\lambda (1)(x)-1
&=\int_0^\infty P_{|I|}^\lambda (x,y)dy-\frac{1}{\pi}\int_{-\infty }^{+\infty }\frac{|I|}{(x-y)^2+|I|^2}dy\\
&=\Big(\int_0^{x/2} +\int_{2x}^\infty \Big) P_{|I|}^\lambda (x,y)dy
-\frac{1}{\pi }\Big(\int_{-\infty }^{x/2}+\int_{2x}^\infty \Big)\frac{|I|}{(x-y)^2+|I|^2}dy\\
&\quad +\int_{x/2}^{2x}\Big(P_{|I|}^\lambda (x,y)-\frac{1}{\pi }\frac{|I|}{(x-y)^2+|I|^2}\Big)dy\\
&=:\sum_{i=1}^3R_i(x),\quad x\in (0,\infty ).
\end{align*}
By using \eqref{D3} we get
\begin{align*}
|R_1(x)+R_2(x)|&
\leq C\left\{\Big(\int_0^{x/2}+\int _{2x}^\infty \Big)\frac{|I|}{(x-y)^2+|I|^2}dy+\int_0^\infty \frac{|I|}{(x+y)^2+|I|^2}dy \right\}\\
&\leq C|I|\left\{\Big(\int_0^{x/2}+\int_{2x}^\infty\Big)\frac{1}{(x-y)^2+|I|^2}dy +\int_{x/2}^{2x} \frac{1}{(x+y)^2+|I|^2}dy\right\}\\
&\leq C|I|\Big(\int_0^{x/2}\frac{dy}{x^2}+\int_{x/2}^\infty \frac{dy}{y^2}\Big)
\leq C\frac{|I|}{x}, \quad x\in (0,\infty ).
\end{align*}
To analyze $R_3(x)$, $x\in (0,\infty )$, we write
\begin{align*}
P_t^\lambda (x,y)
&=\frac{2\lambda t(xy)^\lambda }{\pi }\Big(\int_0^{\pi /2}+\int_{\pi /2}^\pi \Big)
\frac{(\sin \theta )^{2\lambda -1}}{((x-y)^2+t^2+2xy(1-\cos \theta ))^{\lambda +1}}d\theta\\
&=:P_t^{\lambda ,1}(x,y)+P_t^{\lambda ,2}(x,y),\quad x,y,t\in (0,\infty ),
\end{align*}
and 
\begin{align*}
P_t^{\lambda ,1}(x,y)
&=\frac{2\lambda t(xy)^\lambda }{\pi }\Big\{\int_0^{\pi /2}\frac{(\sin \theta )^{2\lambda -1}-\theta ^{2\lambda -1}}{((x-y)^2+t^2+2xy(1-\cos \theta))^{\lambda +1}}d\theta \\
&\quad +\int_0^{\pi /2}\Big(\frac{\theta ^{2\lambda -1}}{((x-y)^2+t^2+2xy(1-\cos \theta))^{\lambda +1}}-\frac{\theta ^{2\lambda -1}}{((x-y)^2+t^2+xy\theta^2)^{\lambda +1}}\Big)d\theta \\
&\quad +\int_0^\infty \frac{\theta ^{2\lambda -1}}{((x-y)^2+t^2+xy\theta ^2)^{\lambda +1}}d\theta -\int_{\pi /2}^\infty \frac{\theta ^{2\lambda -1}}{((x-y)^2+t^2+xy\theta ^2)^{\lambda +1}}d\theta \Big\}\\
&=:\frac{2\lambda t(xy)^\lambda }{\pi }\sum_{i=1}^4I_i(x,y,t), \quad x,y,t\in (0,\infty ).
\end{align*}
We observe that
$$
\frac{2\lambda t(xy)^\lambda }{\pi }I_3(x,y,t)=\frac{1}{\pi}\frac{t}{(x-y)^2+t^2}=P_t(x-y),\quad x,y,t\in (0,\infty ),
$$
and then, we obtain, for each $x,y,t\in (0,\infty )$,
\begin{equation}\label{decomposition}
P_t^\lambda (x,y)-P_t(x-y)=\frac{2\lambda t(xy)^\lambda }{\pi }(I_1(x,y,t)+I_2(x,y,t)+I_4(x,y,t))+P_t^{\lambda ,2}(x,y).
\end{equation}

We have that, 
$$
|P_t^{\lambda,2}(x,y)|\leq C\frac{t(xy)^\lambda }{(x^2+y^2+t^2)^{\lambda +1}}\leq C\frac{t}{x^2},\quad x,y,t\in (0,\infty).
$$

By using mean value theorem we get
\begin{align*}
|I_1(x,y,t)|&\leq C\int_0^{\pi /2}\frac{\theta ^{2\lambda +1}}{((x-y)^2+t^2+xy\theta ^2)^{\lambda +1}}d\theta\leq C\int_0^{\pi /2}\frac{\theta ^{2\lambda +1}}{(|x-y|+t+x\theta )^{2\lambda +2}}d\theta\\
&\leq 
\frac{C}{x^{2\lambda +3/2}|x-y|^{1/2}},\quad 0<\frac{x}{2}<y<2x,\;t>0,
\end{align*}
and
\begin{align*}
|I_2(x,y,t)|&\leq C\int_0^{\pi /2}\frac{xy\theta ^{2\lambda +3}}{((x-y)^2+t^2+xy\theta ^2)^{\lambda +2}}d\theta \leq C\int_0^{\pi /2}\frac{x^2\theta ^{2\lambda +3}}{(|x-y|+t+x\theta )^{2\lambda +4}}d\theta  \\
&\leq \frac{C}{x^{2\lambda +3/2}|x-y|^{1/2}},\quad 0<\frac{x}{2}<y<2x,\;t>0.
\end{align*}

Also, for every $t>0$ and $0<\frac{x}{2}<y<2x$, we can write
\begin{align*}
|I_4(x,y,t)|&=\frac{(xy)^{-\lambda}}{(x-y)^2+t^2}\int_{\frac{\pi}{2}\sqrt{xy/((x-y)^2+t^2)}}^\infty \frac{u^{2\lambda -1}}{(1+u^2)^{\lambda +1}}du\\
&\leq C\frac{x^{-2\lambda}}{(x-y)^2+t^2}
\int_{\frac{\pi}{2}\sqrt{xy/((x-y)^2+t^2)}}^\infty \frac{du}{u^3}\leq \frac{C}{x^{2\lambda +2}}.
\end{align*}

From \eqref{decomposition} and by putting together the above estimates we get
$$
\left|P_t^\lambda (x,y)-P_t(x-y)\right|
\leq Ct\Big(\frac{1}{x^{3/2}|x-y|^{1/2}}+\frac{1}{x^2}\Big),\quad 0<\frac{x}{2}<y<2x,\;t>0.
$$
It follows that
$$
|R_3(x)|
\leq C|I|\int_{x/2}^{2x}\Big(\frac{1}{x^{3/2}|x-y|^{1/2}}+\frac{1}{x^2}\Big)dy
\leq C\frac{|I|}{x},\quad x\in (0,\infty ).
$$
We obtain that
\begin{equation}\label{dif2}
|P_{|I|}^\lambda (1)(x)-1|\leq C\frac{|I|}{x},\quad x\in (0,\infty ).
\end{equation}
Since $x_I>|I|$, then $x_I-|I|/2>2^{-1}x_I$, and we get
\begin{align}\label{J3b}
J_3&\leq C|f_I|\int_I\frac{1}{x}dx\leq \frac{C}{|I|}\int_I|f(y)|dy\frac{|I|}{x_I-|I|/2}\nonumber\\
&\leq C\frac{x_I+|I|}{(x_I-|I|/2)(x_I+|I|)}\int_0^{x_I+|I|}|f(y)|dy\leq C\|f\|_{BMO_{\textrm{o}}(\mathbb{R})}.
\end{align}
We conclude that
$$
\frac{1}{|I|}\int_I|f(x)-P_{|I|}^\lambda (f)(x)|dx\leq C\|f\|_{BMO_{\textrm{o}}(\mathbb{R})}.
$$
Thus we prove that $f\in BMO(P^\lambda )$ and that
$$
\|f\|_{BMO(P^\lambda )}\leq C\|f\|_{BMO_{\textrm{o}}(\mathbb{R})}\leq C\|T\|_{(H_1^\lambda (0,\infty ))'}.
$$
\end{proof}

From Proposition \ref{Prop2.1} and since $BMO_{\textrm{o}}(\mathbb{R})$ is the dual space of $H_\lambda ^1(0,\infty )$ (\cite[p. 466]{BCFR1}),  we can deduce the equality of $BMO(P^\lambda )$ and $BMO_{\textrm{o}}(\mathbb{R})$.

\begin{cor}\label{cor2.1}
Let $\lambda >0$. Then, $BMO(P^\lambda )=BMO_{\textrm{o}}(\mathbb{R})$ algebraic and topologically.
\end{cor}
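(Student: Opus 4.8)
The plan is to establish the two continuous inclusions $BMO_{\textrm{o}}(\mathbb{R})\subseteq BMO(P^\lambda)$ and $BMO(P^\lambda)\subseteq BMO_{\textrm{o}}(\mathbb{R})$ with control of the corresponding norms; since both are normed spaces, the coincidence of the underlying sets together with the two norm estimates yields the equivalence $\|\cdot\|_{BMO(P^\lambda)}\sim\|\cdot\|_{BMO_{\textrm{o}}(\mathbb{R})}$. Throughout one uses that $BMO(P^\lambda)$ and $BMO_{\textrm{o}}(\mathbb{R})$ are both contained in $L^1((0,\infty),(1+x)^{-2}dx)$, hence in $L^1(0,a)$ for every $a>0$; in particular $P_{|I|}^\lambda(f)$ and the pairings $\int_0^\infty f(x)b(x)\,dx$ with $b\in\mathcal{A}$ appearing below make sense.

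For the inclusion $BMO_{\textrm{o}}(\mathbb{R})\subseteq BMO(P^\lambda)$ I would simply point out that the computation already carried out in the proof of Proposition \ref{Prop2.1}\,$ii)$ --- the decomposition $\frac{1}{|I|}\int_I|f(x)-P_{|I|}^\lambda(f)(x)|\,dx\le J_1+J_2+J_3$ and the bounds for $J_1$, $J_2$, $J_3$ obtained from \eqref{D3} and \eqref{dif1}--\eqref{J3b} --- uses nothing about $f$ beyond the hypothesis $f\in BMO_{\textrm{o}}(\mathbb{R})$ and produces $\frac{1}{|I|}\int_I|f(x)-P_{|I|}^\lambda(f)(x)|\,dx\le C\|f\|_{BMO_{\textrm{o}}(\mathbb{R})}$ for every bounded interval $I\subset(0,\infty)$. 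Hence every $f\in BMO_{\textrm{o}}(\mathbb{R})$ belongs to $BMO(P^\lambda)$ with $\|f\|_{BMO(P^\lambda)}\le C\|f\|_{BMO_{\textrm{o}}(\mathbb{R})}$, and this direction will require no new work.

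For the reverse inclusion, let $f\in BMO(P^\lambda)$. By Proposition \ref{Prop2.1}\,$i)$ the functional $T_f$, $T_f(b)=\int_0^\infty f(x)b(x)\,dx$, extends to an element of $(H_\lambda^1(0,\infty))'$ with $\|T_f\|_{(H_\lambda^1(0,\infty))'}\le C\|f\|_{BMO(P^\lambda)}$. Since $BMO_{\textrm{o}}(\mathbb{R})$ realizes the dual of $H_\lambda^1(0,\infty)$ (\cite[p. 466]{BCFR1}), there is $g\in BMO_{\textrm{o}}(\mathbb{R})$ with $T_f(b)=\int_0^\infty g(x)b(x)\,dx$ for all $b\in\mathcal{A}$ and $\|g\|_{BMO_{\textrm{o}}(\mathbb{R})}\le C\|T_f\|_{(H_\lambda^1(0,\infty))'}\le C\|f\|_{BMO(P^\lambda)}$. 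It then remains to identify $f$ with $g$. From $\int_0^\infty(f-g)(x)b(x)\,dx=0$ for every odd-atom $b$: testing against odd-atoms of type $(b)$ supported in an arbitrary bounded interval $I\subset(0,\infty)$ shows that $f-g$ is a.e.\ constant on $I$, and running $I$ through an increasing chain of overlapping intervals exhausting $(0,\infty)$ shows that this constant does not depend on $I$, so $f-g\equiv c$ a.e.\ on $(0,\infty)$ for some constant $c$; testing against an odd-atom of type $(a)$, $\mathfrak a=\tfrac1\delta\chi_{(0,\delta)}$, gives $0=\int_0^\infty(f-g)\mathfrak a=\tfrac1\delta\int_0^\delta c\,dx=c$, so $c=0$ and $f=g$ a.e. Consequently $f\in BMO_{\textrm{o}}(\mathbb{R})$ with $\|f\|_{BMO_{\textrm{o}}(\mathbb{R})}\le C\|f\|_{BMO(P^\lambda)}$.

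There is no serious obstacle here; the only point that is not purely formal is the identification $f=g$ in the last step, and even that is routine: the mean-zero atoms of type $(b)$ give local constancy of $f-g$, while the type $(a)$ atoms pin the global constant to zero, both moves being legitimate precisely because $f,g\in L^1(0,a)$ for all $a>0$. Putting the two inclusions together gives $BMO(P^\lambda)=BMO_{\textrm{o}}(\mathbb{R})$ with equivalent norms, which is the assertion of the corollary.
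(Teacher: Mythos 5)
Your proposal is correct and follows essentially the same route as the paper, which deduces the corollary from Proposition \ref{Prop2.1} combined with the duality $BMO_{\textrm{o}}(\mathbb{R})\cong (H^1_\lambda(0,\infty))'$; you have merely made explicit the two steps the paper leaves implicit, namely that the computation in the proof of Proposition \ref{Prop2.1}\,$ii)$ already yields $\|f\|_{BMO(P^\lambda)}\leq C\|f\|_{BMO_{\textrm{o}}(\mathbb{R})}$ for every $f\in BMO_{\textrm{o}}(\mathbb{R})$, and that the two representing functions of the functional $T_f$ coincide a.e.\ (via testing against odd-atoms). Both added details are sound.
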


The following property will be very useful in the sequel.

\begin{prop}\label{Prop2.2}
Let $\lambda >0$. There exists $C>0$ such that, for every $f\in BMO_{\textrm{o}}(\mathbb{R})$,
$$
|t\partial _tP_t^\lambda (f)(x)|+|tD_{\lambda ,x}P_t^\lambda (f)(x)|\leq C\|f\|_{BMO_{\textrm{o}}(\mathbb{R})},\quad x,t\in (0,\infty ).
$$
\end{prop}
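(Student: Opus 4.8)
The plan is to bound the two quantities $|t\partial_t P_t^\lambda(f)(x)|$ and $|tD_{\lambda,x}P_t^\lambda(f)(x)|$ uniformly in $x,t\in(0,\infty)$ by splitting the integral defining $P_t^\lambda(f)(x)$ over the ``local'' interval $I_{x,t}:=(0,\infty)\cap(x-t,x+t)$ and the complementary dyadic annuli $A_k:=(0,\infty)\cap\big(B(x,2^{k}t)\setminus B(x,2^{k-1}t)\big)$, $k\ge1$. Throughout I will use Lemma \ref{D4}, which gives $|\partial_t P_t^\lambda(x,y)|+|D_{\lambda,x}P_t^\lambda(x,y)|\le (C/t)P_t^\lambda(x,y)$, together with the pointwise bound \eqref{D3}, namely $P_t^\lambda(x,y)\le Ct/((x-y)^2+t^2)$. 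Hence it suffices to estimate $\int_0^\infty P_t^\lambda(x,y)|f(y)|\,dy$ with a suitable localization; the differentiated kernels are controlled by the same expression up to the harmless factor $1/t$ which cancels the $t$ in $t\partial_t$ and $tD_{\lambda,x}$.

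The first step is to reduce to a statement about $f-c$ for a well-chosen constant $c$. Since $\{P_t^\lambda\}_{t>0}$ is not Markovian we cannot simply subtract a constant freely, but by \eqref{dif1}, \eqref{dif2} (established in the proof of Proposition \ref{Prop2.1}, $ii)$) we have the uniform bound $|P_t^\lambda(1)(x)-1|\le C\min\{1,t/x\}$, and more crudely $P_t^\lambda(|1|)(x)\le C$ for all $x,t$; so the contribution of a constant is harmless. Choosing $c=f_{I_{x,t}}$ when $x>t$ (so that $I_{x,t}$ is a genuine interval of length $2t$ inside $(0,\infty)$) and $c=0$ when $x\le t$, I then write $f=(f-c)+c$. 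For the constant part, $|t\partial_t P_t^\lambda(c)(x)|\le C|c|\,\frac{1}{t}\int_0^\infty P_t^\lambda(x,y)\,dy\le C|c|$, and when $x\le t$ the cancellation condition $b)$ in the definition of $BMO_{\textrm{o}}(\mathbb{R})$ is not even needed since $c=0$; when $x>t$, $|c|=|f_{I_{x,t}}|\le \frac{1}{2t}\int_{I_{x,t}}|f|\le\frac{x+t}{t(x+t)}\int_0^{x+t}|f|\le C\|f\|_{BMO_{\textrm{o}}(\mathbb{R})}$ exactly as in \eqref{J3}, \eqref{J3b}.

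The second step is the estimate for $f-c$, which is the heart of the matter. Using Lemma \ref{D4} and \eqref{D3},
\begin{align*}
|t\partial_t P_t^\lambda(f-c)(x)|+|tD_{\lambda,x}P_t^\lambda(f-c)(x)|
&\le C\int_0^\infty \frac{t}{(x-y)^2+t^2}|f(y)-c|\,dy\\
&\le C\Big(\frac{1}{t}\int_{I_{x,t}}|f(y)-c|\,dy+\sum_{k\ge1}\frac{1}{2^{2k}t}\int_{A_k}|f(y)-c|\,dy\Big).
\end{align*}
When $x>t$ this is controlled by $C\|f\|_{BMO_{\textrm{o}}(\mathbb{R})}\big(1+\sum_{k\ge1}k/2^{k}\big)\le C\|f\|_{BMO_{\textrm{o}}(\mathbb{R})}$, precisely the telescoping argument already carried out in \eqref{3.6.1}, using property $a)$ on the intervals $2^{k+1}I_{x,t}$. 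When $x\le t$ we have $c=0$ and $I_{x,t}=(0,2x)$ roughly, so $A_k\subset(0,2^{k+1}t)$; then $\frac{1}{2^{2k}t}\int_{A_k}|f(y)|\,dy\le\frac{2^{k+1}t}{2^{2k}t}\cdot\frac{1}{2^{k+1}t}\int_0^{2^{k+1}t}|f(y)|\,dy\le C\,2^{-k}\|f\|_{BMO_{\textrm{o}}(\mathbb{R})}$ by property $b)$, and the $k=0$ term is handled the same way, so again the sum converges to $C\|f\|_{BMO_{\textrm{o}}(\mathbb{R})}$.

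The main obstacle I anticipate is the bookkeeping at the boundary: one must keep careful track of the two regimes $x\le t$ and $x>t$ and, within the second, ensure that $I_{x,t}$ and its dilates $2^kI_{x,t}$ used in the telescoping either stay inside $(0,\infty)$ or have the part outside $(0,\infty)$ absorbed into an interval of the form $(0,a)$ so that property $b)$ can be invoked — this is exactly the sort of case split that appears in \eqref{J3}–\eqref{J3b} and in \eqref{3.6.1}. Once the dichotomy is set up correctly, no new estimates beyond Lemma \ref{D4}, \eqref{D3}, \eqref{dif1}, \eqref{dif2} and the definition of $BMO_{\textrm{o}}(\mathbb{R})$ are required.
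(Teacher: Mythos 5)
Your overall strategy (reduce to the kernel bounds of Lemma \ref{D4} and \eqref{D3}, subtract a local mean, and sum over dyadic annuli) is different from the paper's, which instead writes $t\partial_tP_t^\lambda(f)=t\partial_tP_t(f_{\rm o})+{}$corrections, quotes the classical bound for $t\partial_tP_t(f_{\rm o})$ with $f_{\rm o}$ the odd extension, and estimates the corrections by comparing $P_t^\lambda(x,y)$ with $P_t(x\mp y)$. The part of your argument devoted to $f-c$ is fine, but there is a genuine gap in the treatment of the constant part in the regime $x>t$. You claim $|c|=|f_{I_{x,t}}|\le \frac{1}{t}\int_0^{x+t}|f|\le C\|f\|_{BMO_{\rm o}(\mathbb{R})}$, but $\frac{1}{t}\int_0^{x+t}|f|=\frac{x+t}{t}\cdot\frac{1}{x+t}\int_0^{x+t}|f|$, and the factor $\frac{x+t}{t}$ is unbounded when $x\gg t$; property $b)$ only controls the second factor. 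Indeed $|f_{I_{x,t}}|$ is \emph{not} bounded by $C\|f\|_{BMO_{\rm o}(\mathbb{R})}$: for $f(y)=\chi_{(1/2,3/2)}(y)\log\frac{1}{|y-1|}$, which lies in $BMO_{\rm o}(\mathbb{R})$, one has $f_{I_{1,t}}\sim\log(1/t)\to\infty$. (In general the best one can say is $|f_{I_{x,t}}|\le C(1+\log(x/t))\|f\|_{BMO_{\rm o}(\mathbb{R})}$.) Your appeal to \eqref{J3}--\eqref{J3b} is misapplied: \eqref{J3} covers only the case $x_I\le|I|$, and in \eqref{J3b} the paper never bounds $|f_I|$ alone --- it bounds the \emph{product} $|f_I|\,|P_{|I|}^\lambda(1)(x)-1|$, using the decay $|P_{|I|}^\lambda(1)(x)-1|\le C|I|/x$ from \eqref{dif2} to absorb the growth of $|f_I|$.

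Consequently, pairing $|c|$ with the crude bound $|t\partial_tP_t^\lambda(1)(x)|\le C$ cannot close the argument; you would need the refined estimates
$$
|t\partial_tP_t^\lambda(1)(x)|+|tD_{\lambda,x}P_t^\lambda(1)(x)|\le C\min\{1,t/x\},\quad x,t\in(0,\infty),
$$
i.e.\ a derivative analogue of \eqref{dif2}. This does not follow from \eqref{dif2} itself (a bound $|g(t)|\le Ct/x$ does not imply $|tg'(t)|\le Ct/x$), and proving it requires precisely the kind of term-by-term comparison of $\partial_tP_t^\lambda(x,y)$ and $D_{\lambda,x}P_t^\lambda(x,y)$ with the classical kernels that occupies the $J_1$--$J_4$ and $H_1$--$H_4$ estimates in the paper's proof. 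So the missing derivative estimate is not a minor bookkeeping point; it is the substantive content of the proposition, and your proposal does not supply it.
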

\begin{proof}
Let $f\in BMO_{\textrm{o}}(\mathbb{R})$ and consider the odd extension $f_{\rm{o}}$ of $f$ to $\mathbb{R}$. We have that $f_{\rm{o}}\in BMO(\mathbb{R})$ and  
\begin{align*}
P_t(f_{\rm o})(x)&=\int_0^\infty (P_t(x-y)-P_t(x+y))f(y)dy,\quad x,t\in (0,\infty ).
\end{align*}

Since $f_{\rm o}\in BMO(\mathbb{R})$ and $\|f_{\rm o}\|_{BMO(\mathbb{R})}\leq C\|f\|_{BMO_{\rm o}(\mathbb{R})}$, from \cite[(2), p. 22]{Wi} (see also \cite{Ga}) we deduce that
$$
|t\partial _tP_t(f_{\rm o})(x)|+|t\partial_xP_t(f_{\rm o})(x)| \leq C\|f\|_{BMO_{\rm o}(\mathbb{R})},\quad t,x\in (0,\infty ).
$$

Also, we can write
\begin{align}\label{BesselClassical}
t\partial _tP_t^\lambda (f)(x)
&=t\partial _tP_t(f_{\rm o})(x)+\int_0^\infty t\partial _t(P_t^\lambda (x,y)-P_t(x-y)+P_t(x+y))f(y)dy\nonumber\\
&=t\partial _tP_t(f_{\rm o})(x)
+\Big(\int_0^{x/2}+\int_{2x}^\infty\Big)t\partial _tP_t^\lambda (x,y)f(y)dy \nonumber\\
&\quad +\Big(\int_0^{x/2}+\int_{2x}^\infty\Big)t\partial _t(P_t(x+y)-P_t(x-y))f(y)dy\nonumber\\
&\quad +\int_{x/2}^{2x}t\partial _tP_t(x+y)f(y)dy+\int_{x/2}^{2x}t\partial _t(P_t^\lambda (x,y)-P_t(x-y))f(y)dy\nonumber\\
&=t\partial _tP_t(f_{\rm o})(x)+\sum_{i=1}^4 J_i(x,t),\quad x,t\in (0,\infty ).
\end{align}

By using Lemma \ref{D4} and (\ref{D2}) we deduce that
\begin{align}\label{J1}
|J_1(x,t)|
&\leq C\Big(\int_0^{x/2}+\int_{2x}^\infty \Big)
\frac{t(xy)^\lambda |f(y)|}{((x-y)^2+t^2)^{\lambda +1}}dy\nonumber\\
&\leq C\Big(\int_0^{x/2}\frac{tx^{2\lambda}|f(y)|}{(x^2+t^2)^{\lambda +1}}dy+\int_{2x}^\infty \frac{t(xy)^\lambda|f(y)|}{(y^2+t^2)^{\lambda +1}}dy\Big)\nonumber\\
&\leq C\Big(\frac{1}{x}\int_0^x|f(y)|dy+x^\lambda\int_{2x}^\infty \frac{|f(y)|}{y^{\lambda +1}}dy\Big) \nonumber\\
&\leq C\Big(\|f\|_{BMO_{\textrm{o}}(\mathbb{R})}+x^\lambda\sum_{k=1}^\infty \int_{2^kx}^{2^{k+1}x}\frac{|f(y)|}{y^{\lambda +1}}dy\Big)\nonumber\\
&\leq  C\Big(\|f\|_{BMO_{\textrm{o}}(\mathbb{R})}+x^\lambda \sum_{k=1}^\infty \frac{1}{(2^kx)^{\lambda +1}}\int_0^{2^{k+1}x}|f(y)|dy\Big)\nonumber\\
&\leq  C\Big(\|f\|_{BMO_{\textrm{o}}(\mathbb{R})}+\sum_{k=1}^\infty 2^{-k\lambda }\frac{1}{2^{k+1}x}\int_0^{2^{k+1}x}|f(y)|dy\Big)\nonumber\\
&\leq C\|f\|_{BMO_{\rm{o}}(\mathbb{R})},\quad x,t\in (0,\infty ). 
\end{align}

On the other hand, 
\begin{align*}
|J_2(x,t)|
&=\frac{1}{\pi}\Big|\Big(\int_0^{x/2}+\int_{2x}^\infty\Big)t\partial _t\Big(\frac{4xyt}{((x-y)^2+t^2)((x+y)^2+t^2)}\Big)f(y)dy\Big|\\
&=\frac{1}{\pi}\Big|\Big(\int_0^{x/2}+\int_{2x}^\infty\Big)\frac{4xyt}{((x-y)^2+t^2)((x+y)^2+t^2)} \\
& \qquad  \qquad \qquad  \qquad -\frac{8xyt^3((x+y)^2+(x-y)^2+2t^2)}{((x-y)^2+t^2)^2((x+y)^2+t^2)^2}f(y)dy\Big|\\
&\leq C\Big(\int_0^{x/2}+\int_{2x}^\infty\Big)\frac{xyt}{((x-y)^2+t^2)((x+y)^2+t^2)}|f(y)|dy\\
&\leq C\Big(\frac{tx^2}{(x^2+t^2)^2}\int_0^x|f(y)|dy+tx\int_{2x}^\infty \frac{y|f(y)|}{(y^2+t^2)^2}dy\Big)\\
&\leq C\Big(\frac{1}{x}\int_0^x|f(y)|dy+x\int_{2x}^\infty \frac{|f(y)|}{y^2}dy\Big) 
\leq C\|f\|_{BMO_{\rm{o}}(\mathbb{R})},\quad x,t\in (0,\infty ).
\end{align*}
The last inequality is obtained by proceeding as in \eqref{J1} for $\lambda =1$.

Also, we have that
$$
|J_3(x,t)|
=\frac{1}{\pi}\Big|\int_{x/2}^{2x}\Big(\frac{t}{(x+y)^2+t^2}-\frac{2t^3}{((x+y)^2+t^2)^2}\Big)f(y)dy\Big|
\leq \frac{C}{x}\int_{x/2}^{2x}|f(y)|dy
\leq C\|f\|_{BMO_{\rm o}(\mathbb{R})}.
$$

Finally, in order to estimate $J_4(x,t)$, $x,t\in (0,\infty )$, we consider \eqref{decomposition} and write, for each $x,y,t\in (0,\infty )$, 
\begin{equation}\label{diferencia}
t\partial _t(P_t^\lambda (x,y)-P_t(x-y))=\frac{2\lambda }{\pi}(xy)^\lambda\sum_{i=1,2,4}t\partial _t(tI_i(x,y,t))+t\partial _tP_t^{\lambda ,2}(x,y).
\end{equation}

By using the mean value theorem we get
\begin{align*}
|t\partial _t(tI_1(x,y,t))|
&=\Big|tI_1(x,y,t)-2(\lambda +1)t^3\int_0^{\pi /2}\frac{(\sin \theta )^{2\lambda -1}-\theta ^{2\lambda -1}}{((x-y)^2+t^2+2xy(1-\cos \theta ))^{\lambda +2}}d\theta\Big|\\
&\leq Ct\int_0^{\pi /2}\frac{|(\sin \theta )^{2\lambda -1}-\theta ^{2\lambda -1}|}{((x-y)^2+t^2+2xy(1-\cos \theta ))^{\lambda +1}}d\theta\\
&\leq Ct\int_0^{\pi /2}\frac{\theta ^{2\lambda +1}}{(|x-y|+t+x\theta )^{2\lambda +2}}d\theta \leq \frac{C}{x^{2\lambda +1}},\quad 0<\frac{x}{2}<y<2x,\;t>0, 
\end{align*}
and
\begin{align*}
|t\partial _t(tI_2(x,y,t))|
&=\Big|tI_2(x,y,t)-2(\lambda +1)t^3\int_0^{\pi /2}\Big[\frac{\theta ^{2\lambda -1}}{((x-y)^2+t^2+2xy(1-\cos \theta ))^{\lambda +2}} \\
& \qquad \qquad \qquad \qquad \qquad \qquad \qquad - \frac{\theta ^{2\lambda -1}}{((x-y)^2+t^2+xy\theta ^2)^{\lambda +2}}\Big]d\theta \Big| \\
& \leq Cxy\Big(\int_0^{\pi /2}\frac{t\theta ^{2\lambda +3}}{((x-y)^2+t^2+xy\theta ^2)^{\lambda +2}}d\theta +\int_0^{\pi /2}\frac{t^3\theta ^{2\lambda +3}}{((x-y)^2+t^2+xy\theta ^2)^{\lambda +3}}d\theta \Big)\\
& \leq Ctx^2\int_0^{\pi /2}\frac{\theta ^{2\lambda +3}}{(|x-y|+t+x\theta )^{2\lambda +4}}d\theta 
\leq \frac{C}{x^{2\lambda +1}},
\quad 0<\frac{x}{2}<y<2x,\;t>0.
\end{align*}
Also, we obtain, when $0<x/2<y<2x$ and $t>0$,
\begin{align*}
|t\partial _t(tI_4(x,y,t))|
&=\Big|tI_4(x,y,t)-2(\lambda +1)t^3\int_{\pi /2}^\infty \frac{\theta ^{2\lambda -1}}{((x-y)^2+t^2+xy\theta ^2)^{\lambda +2}}d\theta \Big|\\
&\leq Ct\int_{\pi /2}^\infty \frac{\theta ^{2\lambda -1}}{((x-y)^2+t^2+xy\theta ^2)^{\lambda +1}}d\theta\leq \frac{C}{x^{2\lambda +1}}\int_{\pi /2}^\infty \frac{d\theta}{\theta ^2}\leq \frac{C}{x^{2\lambda +1}}.
\end{align*}
Finally, we can write
\begin{align*}
|t\partial _t(P_t^{\lambda ,2}(x,y))|
&=\Big|P_t^{\lambda ,2}(x,y)-\frac{4\lambda (\lambda +1)}{\pi}t^3
(xy)^\lambda \int_{\pi /2}^\pi\frac{(\sin \theta )^{2\lambda -1}}{((x-y)^2+t^2+2xy(1-\cos \theta))^{\lambda +2}}d\theta \Big|\\
&\leq Ctx^{2\lambda }\int_{\pi /2}^\pi \frac{\theta ^{2\lambda -1}}{(x+y+t)^{2\lambda +2}}d\theta\leq \frac{C}{x}, \quad 0<\frac{x}{2}<y<2x,\;t>0..
\end{align*}

From \eqref{diferencia} and by combining the above estimates, it follows that
$$
J_4(x,t)\leq \frac{C}{x}\int_0^{2x}|f(y)|dy\leq C\|f\|_{BMO_{\rm o}(\mathbb{R})},\quad x,t\in (0,\infty ).
$$

Equality \eqref{BesselClassical} allows us to conclude that
$$
|t\partial _tP_t^\lambda (f)(x)|\leq C\|f\|_{BMO_{\rm o}(\mathbb{R})},\quad x,t \in (0,\infty ).
$$

We are going to see now that
\begin{equation*}
|tD_{\lambda ,x}P_t^\lambda(f)(x)|\leq C\|f\|_{BMO_{\rm o}(\mathbb{R})},\quad x,t\in (0,\infty ).
\end{equation*}

We use a decomposition similar to \eqref{BesselClassical} and write
\begin{align}\label{BesselClassicalD}
tD_{\lambda ,x}P_t^\lambda (f)(x)
&=t\partial _xP_t(f_{\rm o})(x)+t\int_0^\infty [D_{\lambda ,x}(P_t^\lambda (x,y))-\partial _x(P_t(x-y)+P_t(x+y))]f(y)dy\nonumber\\
&=t\partial _xP_t(f_{\rm o})(x)
+\Big(\int_0^{x/2}+\int_{2x}^\infty\Big)tD_{\lambda ,x}(P_t^\lambda (x,y))f(y)dy\nonumber\\
&\quad +\Big(\int_0^{x/2}+\int_{2x}^\infty\Big)t\partial _x(P_t(x+y)-P_t(x-y))f(y)dy\nonumber\\
&\quad +\int_{x/2}^{2x}t\partial _xP_t(x+y)f(y)dy+\int_{x/2}^{2x}t[D_{\lambda ,x}(P_t^\lambda (x,y))-\partial _x(P_t(x-y))]f(y)dy\nonumber\\
&=:t\partial _xP_t(f_{\rm o})(x)+\sum_{i=1}^4 H_i(x,t),\quad x,t\in (0,\infty ).
\end{align}

By using Lemma \ref{D4} and (\ref{D2}) in the same way as in \eqref{J1} we obtain
$$
|H_1(x,t)|\leq C\|f\|_{BMO_{\rm o}(\mathbb{R})},\quad x,t\in (0,\infty ).
$$

On the other hand,
\begin{align*}
|H_2(x,t)|
&=\frac{1}{\pi}\Big|\Big(\int_0^{x/2}+\int_{2x}^\infty\Big)\partial _x\Big(\frac{4xyt^2
}{((x-y)^2+t^2)((x+y)^2+t^2)}\Big)f(y)dy\Big|\\
&\hspace{-1cm}=\frac{1}{\pi}\Big|\Big(\int_0^{x/2}+\int_{2x}^\infty\Big)\frac{4yt^2}{((x-y)^2+t^2)((x+y)^2+t^2)}\\
&\quad -4xyt^2\frac{2(x-y)((x+y)^2+t^2)+2(x+y)((x-y)^2+t^2)}{((x-y)^2+t^2)^2((x+y)^2+t^2)^2}f(y)dy\Big|\\
&\hspace{-1cm}\leq C\Big(\int_0^{x/2}+\int_{2x}^\infty\Big)\frac{yt^2}{((x-y)^2+t^2)((x+y)^2+t^2)}\Big(1+\frac{x|x-y|}{(|x-y|+t)^2}+ \frac{x(x+y)}{(x+y+t)^2}\Big)|f(y)|dy\\
&\hspace{-1cm}\leq C\Big(\frac{xt^2}{(x^2+t^2)^2}\int_0^x|f(y)|dy+\int_{2x}^\infty \frac{yt^2|f(y)|}{(y^2+t^2)(x+y+t)^2}dy\Big)\\
&\hspace{-1cm} \leq C\Big(\frac{1}{x}\int_0^x|f(y)|dy+\int_{2x}^\infty \frac{t|f(y)|}{(x+y+t)^2}dy\Big)\\
&\hspace{-1cm} \leq C\Big(\|f\|_{BMO_{\rm{o}}(\mathbb{R})}
+ \Big(\int_{2x}^{2x+t}+\sum_{k=0}^\infty \int_{2x+2^kt}^{2x+2^{k+1}t}\Big)\frac{t}{(x+y+t)^2}|f(y)|dy\Big)\\
&\hspace{-1cm} \leq C\Big(\|f\|_{BMO_{\rm{o}}(\mathbb{R})}+\frac{1}{2x+t}\int_0^{2x+t}|f(y)|dy +\sum_{k=0}^\infty 2^{-k}\frac{1}{2x+2^{k+1}t}\int_0^{2x+2^{k+1}t}|f(y)|dy\Big)\\
&\hspace{-1cm} \leq C\|f\|_{BMO_{\rm o}(\mathbb{R})},\quad x,t\in (0,\infty ).
\end{align*}

Also, it follows that
$$
|H_3(x,t)|
=\frac{2}{\pi}\Big|\int_{x/2}^{2x}\frac{(x+y)t^2}{((x+y)^2+t^2)^2}f(y)dy\Big|
\leq \frac{C}{x}\int_0^{2x}|f(y)|dy\leq C\|f\|_{BMO_{\rm o}(\mathbb{R})}.
$$

We deal now with $H_4(x,t)$, $x,t\in (0,\infty )$. From \eqref{decomposition} we have that
\begin{align}\label{Dlambda}
D_{\lambda ,x}P_t^\lambda (x,y)-\partial _xP_t(x-y)&=\partial _x[P_t^\lambda (x,y)-P_t(x-y)]-\frac{\lambda}{x}P_t^\lambda (x,y)\nonumber\\
&\hspace{-2cm}=\frac{2\lambda}{\pi}\sum_{i=1,2,4}t\partial_x[(xy)^\lambda I_i(x,y,t)]+\partial_xP_t^{\lambda , 2}(x,y)-\frac{\lambda}{x}P_t^\lambda (x,y),\quad x,y,t\in (0,\infty ).
\end{align}

Again by using the mean value theorem we get
\begin{align*}
|t\partial _x[(xy)^\lambda I_1(x,y,t)]|
&=\Big| \frac{\lambda t}{x} (xy)^\lambda I_1(x,y,t)\\
& \qquad -2(\lambda +1)t(xy)^\lambda\int_0^{\pi /2}\frac{[(\sin \theta )^{2\lambda -1}-\theta ^{2\lambda -1}][(x-y)+y(1-\cos \theta)]}{((x-y)^2+t^2+2xy(1-\cos \theta ))^{\lambda +2}}d\theta\Big|\\
&\leq Ctx^{2\lambda} \Big(\frac{1}{x}\int_0^{\pi /2}\frac{\theta ^{2\lambda +1}}{(|x-y|+t+x\theta )^{2\lambda +2}}d\theta+\int_0^{\pi /2}\frac{\theta ^{2\lambda +1}(|x-y|+y\theta ^2)}{(|x-y|+t+x\theta )^{2\lambda +4}}d\theta\Big)\\
&\leq \frac{C}{tx}\Big(\int_0^{\pi /2}\theta d\theta +\int_0^{\pi/2}(1+\theta )d\theta \Big)
\leq \frac{C}{tx},\quad 0<\frac{x}{2}<y<2x,\;t>0, 
\end{align*}
and
\begin{align*}
|t\partial _x[(xy)^\lambda I_2(x,y,t)]|
&=\Big|\frac{\lambda t}{x} (xy)^\lambda I_2(x,y,t)\\
&\quad -2(\lambda +1)t(xy)^\lambda\int_0^{\pi /2}\theta ^{2\lambda -1}\Big[\frac{(x-y)+y(1-\cos \theta)}{((x-y)^2+t^2+2xy(1-\cos \theta ))^{\lambda +2}} \\
& \qquad \qquad \qquad \qquad \qquad \qquad \qquad - \frac{(x-y)+y\theta ^2}{((x-y)^2+t^2+xy\theta ^2)^{\lambda +2}} \Big]d\theta\Big| \\
&\leq Ct(xy)^\lambda \Big\{\frac{1}{x}\int_0^{\pi /2}\frac{\theta ^{2\lambda +3}xy}{(|x-y|+t+x\theta )^{2\lambda +4}}d\theta \\
&\quad  +\int_0^{\pi /2}\theta ^{2\lambda +3}\Big(\frac{y}{(|x-y|+t+x\theta )^{2\lambda +4}}+\frac{xy(|x-y|+y\theta^2 )}{(|x-y|+t+x\theta )^{2\lambda +6}}\Big)d\theta \Big\}\\
&\leq C\Big(tx^{2\lambda +1}\int_0^{\pi /2}\frac{\theta ^{2\lambda +2}}{(|x-y|+t+x\theta )^{2\lambda +4}}d\theta \Big)
\leq \frac{C}{tx},\quad 0<\frac{x}{2}<y<2x,\;t>0.
\end{align*}

Now, we write
\begin{align*}
|t\partial _x((xy)^\lambda I_4(x,y,t))|&=\Big|\frac{\lambda t}{x} (xy)^\lambda I_4(x,y,t)\\
&\quad -2(\lambda +1)t(xy)^\lambda \int_{\pi /2}^\infty\frac{\theta ^{2\lambda -1}(x-y+y\theta ^2)}{((x-y)^2+t^2+xy\theta ^2)^{\lambda +2}}d\theta \Big|\\
&\leq Ctx^{2\lambda -1}\int_{\pi /2}^\infty \frac{\theta ^{2\lambda -1}}{(|x-y|+t+x\theta )^{2\lambda +2}}d\theta \\
&=C\frac{t}{x(|x-y|+t)^2}\int_{\frac{\pi x}{2(|x-y|+t)}}^\infty \frac{u^{2\lambda -1}}{(1+u)^{2\lambda +2}}du\leq \frac{C}{tx}, \quad 0<\frac{x}{2}<y<2x,\;t>0.
\end{align*}
Finally, it is clear from \eqref{D3} that
$$
\frac{1}{x}|P_t^\lambda (x,y)|\leq \frac{C}{tx},\quad x,t\in (0,\infty ),
$$
and, also, 
\begin{align*}
|\partial _x(P_t^{\lambda ,2}(x,y))|
&=\Big|\frac{\lambda }{x}P_t^{\lambda ,2}(x,y)-\frac{4\lambda(\lambda +1)}{\pi}t(xy)^\lambda \int_{\pi /2}^\pi\frac{(\sin \theta )^{2\lambda -1}[x-y+y(1-\cos \theta)]}{((x-y)^2+t^2+2xy(1-\cos \theta))^{\lambda +2}}d\theta \Big|\\
&\leq Ctx^{2\lambda -1}\int_{\pi /2}^\pi \frac{\theta ^{2\lambda -1}}{(x+y+t)^{2\lambda +2}}d\theta\leq \frac{C}{tx}, \quad 0<\frac{x}{2}<y<2x,\;t>0.
\end{align*}
By combining the above estimates and taking into account \eqref{Dlambda} it follows that
$$
H_4(x,t)\leq \frac{C}{x}\int_0^{2x}|f(y)|dy\leq C\|f\|_{BMO_{\rm o}(\mathbb{R})},\quad x,t\in (0,\infty ).
$$

Then, from \eqref{BesselClassicalD} we conclude that
$$
|D_{\lambda ,x}P_t^\lambda (f)(x)|\leq C\|f\|_{BMO_{\rm o}(\mathbb{R})},\quad x,t\in (0,\infty ).
$$
\end{proof}

\section{Proof of Theorem \ref{Th1.3}}\label{Sect4}

\begin{proof}[Proof of Theorem \ref{Th1.3}, $(i)$]
Suppose that $\mu$ is a Carleson measure on $(0,\infty)\times (0,\infty )$. According to Corollary \ref{cor2.1} in order to see that $S_{\mu,P^\lambda}\in BMO_{\rm o}(\mathbb R)$ it is sufficient to see that there exists $C>0$ such that, for every bounded interval $I\subset (0,\infty)$,

\begin{equation}\label{C.1}
\frac{1}{|I|}\int_I|S_{\mu,P^\lambda}(x)-P_{|I|}^\lambda(S_{\mu,P^\lambda})(x)|dx\leq C\|\mu\|_{\mathcal C}.
\end{equation}
We proceed as in the proof of \cite[Proposition 2.5]{CDLSY}.

Let $I$ be a bounded interval in $(0,\infty)$. We can write
\begin{align*}
&  \int_I  |S_{\mu,P^\lambda}(x)-P_{|I|}^\lambda(S_{\mu,P^\lambda})(x)|dx \\
& \qquad \leq C \int_I\int_{(0,\infty)^2}|P_t^\lambda(x,y)-P_{t+|I|}^\lambda(x,y)|d\mu(y,t)dx \\
& \qquad  \leq\Big(\int_I\int_{\widehat{2I}}+\int_I\int_{(0,\infty)^2\setminus \widehat{2I}}\Big)|P_t^\lambda(x,y)-P_{t+|I|}^\lambda(x,y)|d\mu(y,t)dx \\
& \qquad  =:I_1+I_2.
\end{align*}
According to (\ref{D3}), since $\mu$ is a Carleson measure on $(0,\infty)\times (0,\infty )$, we get
\begin{align*}
  I_1\leq C 
  & \int_{\widehat{2I}}\int_I\Big(\frac{t}{(x-y)^2+t^2}+ \frac{t+|I|}{(x-y)^2+(t+|I|)^2}\Big)dxd\mu(y,t) 
  \leq C\mu(\widehat{2I})\leq C|I|\|\mu\|_{\mathcal C}.
\end{align*}
Also by Lemmma \ref{D4} and (\ref{D3}), we obtain
\begin{align*}
 I_2
 &\leq  C \int_I\int_{(0,\infty)^2\setminus \widehat{2I}}\int_0^{|I|}|\partial_sP_{t+s}^\lambda(x,y)|dsd\mu(y,t)dx \\
& \leq C\sum_{k=1}^\infty\int_I\int_0^{|I|}\int_{\widehat{2^{k+1}I}\setminus\widehat{2^{k}I}}\frac{1}{(x-y)^2+(s+t)^2}d\mu(y,t)dsdx \\
& \leq C\sum_{k=1}^\infty\mu(\widehat{2^{k+1}I})\frac{1}{(2^{k+1}|I|)^2}|I|^2\leq C|I|\|\mu\|_{\mathcal C}.
\end{align*}
Thus, (\ref{C.1}) is proved.
\end{proof}

\begin{proof}[Proof of Theorem \ref{Th1.3}, $(ii)$]
We will use the procedure developed by Wilson (\cite{Wi}) (see also \cite{CDLSY}). We need to make modifications and to justify each step in our setting.

Let $Q$ be a bounded interval in $(0,\infty )$. In what follows we consider right-open intervals and denote by $x_Q$ the center of $Q$, and by $t_Q$ the length of $Q$.

Assume that $f\in BMO_{\rm o}(\mathbb{R})$ with $\mbox{supp }f\subset (0,1)$. We consider $u(x,t):=P_t^\lambda (f)(x)$, $x,t\in (0,\infty )$, and take $Q_0:=[0,2)$. In what follows we consider right-open intervals.

We now construct the $k$-th generation of subintervals of $Q_0$ as follows. By $A$ we denote a positive constant that will be fixed later. The 0-th generation is defined by $G_0:=\{Q_0\}$. 
For every $k \in \N$, the $(k+1)$-th generation $G_{k+1}$ is defined recursively as follows. A dyadic interval $Q\subset Q_0$ is in $G_{k+1}$ when
\begin{itemize}
\item[$(a)$] there exists $Q_1\in G_k$ such that $Q\subset Q_1$,

\item[$(b)$] $Q$ is a maximal dyadic with respect to the property 
$$|x_Q^{-\lambda }u(x_Q,t_Q)-x_{Q_1}^{-\lambda }u(x_{Q_1},t_{Q_1})|>Ax_Q^{-\lambda}.$$
\end{itemize}
Note that the properties of the dyadic intervals and the maximal property ($b$) imply that, if $k\in \mathbb{N}$ and $Q_1,Q_2\in G_k$, then $Q_1=Q_2$ or $Q_1\cap Q_2=\emptyset$.

For every $k\in \mathbb{N}$ and $Q\in G_k$ we define the set 
$$\Sigma_Q
:=\widehat{Q}\setminus \bigcup_{Q'\subset Q, \, Q'\in G_{k+1}}\widehat{Q'}.$$
In the following figure where a possible $\Sigma _Q$ is represented, the dark grey squares are the Carleson boxes of those cubes $Q'\subset Q$ that belong to $G_{k+1}$.  

\begin{figure}[h!]
   \begin{tikzpicture}[scale=1.5]
        \draw[<->] (2.2,0) -- (2.2,2);      
		\node at (2.5,1) {$|Q|$};
		\node at (1,-0.3) {$Q$};
	     
		\path[fill=lightgray](0,0.5)--(0.5,0.5)--(0.5,0)--(0.75,0)--(0.75,0.25)--(0.5,0.25)--(1,0.25)--(1,0)--(1.125,0)--(1.125,0.125)--(1.25,0.125)--(1.25,0)--(1.5,0)--(1.5,0.125)--(1.625,0.125)--(1.625,0)--(2,0)--(2,2)--(0,2);
		\draw[fill=gray](0,0) rectangle (0.5,0.5);
		\draw[fill=gray](0.75,0) rectangle (1,0.25);
		\draw[fill=gray](1.125,0) rectangle (1.25,0.125);
		\draw[fill=gray](1.5,0) rectangle (1.625,0.125);
		\node at (1,1.2){$\Sigma _Q$};	
		
		\draw[-] (0,0) -- (0,2) ; 
 		\draw[dashed] (0,2) -- (2,2);
         \draw[dashed] (0,0) -- (2,0);            
         \draw[dashed] (2,0) -- (2,2);
 \end{tikzpicture}
\end{figure}

\newpage
The set $\Sigma_Q$, $Q\in G_k$, $k\in \mathbb{N}$, can be written in a different and useful way. For every interval $J$ we define $T(J)$ as follows
$$
T(J)
:=\Big\{(x,t): x\in J\mbox{ and } \frac{\ell (J)}{2}\leq t<  \ell (J)\Big\}.
$$
It is clear that, for every dyadic interval $S\subset (0,\infty)$, we have that
$$
\widehat{S}
=\bigcup_{J\subset S, \, J \;{\rm dyadic }}T(J).
$$
Then, for every $k\in \mathbb{N}$ and $Q\in G_k$, we have that 
$$
\Sigma_Q=\bigcup _{J\in \mathcal{A}(Q)}T(J),
$$
where $\mathcal{A}(Q):=\{J\mbox{ dyadic}: J\subseteq Q, J\cap S^{\rm c}\not=\emptyset, \mbox{ for every }S\in G_{k+1}\}$. 

Now, take $k\in \mathbb{N}$ and $Q\in G_k$. We are going to see that
\begin{equation}\label{3.1}
|x^{-\lambda }u(x,t)-x_Q^{-\lambda }u(x_Q,t_Q)|\leq Cx^{-\lambda }(A+\|f\|_{BMO_{\rm o}(\mathbb{R})}),\quad (x,t)\in \Sigma _Q.
\end{equation}
Here $C>0$ does not depend on $k$ or $Q$.

Suppose that $(x,t)\in \Sigma _Q$. There exists $J\in \mathcal{A}(Q)$ such that $(x,t)\in T(J)$.
According to the definition of $G_{k+1}$ and since $x\leq 2x_J$ we get
$$
|x^{-\lambda }_Qu(x_Q,t_Q)-x_J^{-\lambda }u(x_J,t_J)|\leq Ax_J^{-\lambda }\leq 2^\lambda Ax^{-\lambda }.
$$

On the other hand, for some $z$ in the segment joining $x$ and $x_J$ and for some $s$ in the segment joining $t$ and $t_J$, we have that
$$
x^{-\lambda }u(x,t)-x_J^{-\lambda}u(x_J,t_J)=\partial _z(z^{-\lambda }u(z,t))(x_J-x)+\partial _s(x_J^{-\lambda }u(x_J,s))(t_J-t).
$$
Since $x\leq 2x_J$, from Proposition \ref{Prop2.2} it follows that
$$
|x^{-\lambda}u(x,t)-x_J^{-\lambda}u(x_J,t_J)|\leq Cx^{-\lambda }\|f\|_{BMO_{\rm o}(\mathbb{R})},
$$
and \eqref{3.1} is checked.

Next, we show that
\begin{equation}\label{3.2}
\sum_{J\subset Q, \, J\in G_{k+1}}|J|\leq \frac{C}{A}|Q| \, \|f\|_{BMO_{\rm o}(\mathbb{R})}.
\end{equation}
For that, we write
$$
\sum_{J\subset Q, J\in G_{k+1}}|J|\leq \frac{1}{A}\sum_{J\subset Q, J\in G_{k+1}}|J| \, x_J^\lambda |x_Q^{-\lambda}u(x_Q,t_Q)-x_J^{-\lambda }u(x_J,t_J)|,
$$
and use the following decomposition for every $J\in G_{k+1}$, $J\subset Q$,
\begin{align*}
x_Q^{-\lambda}u(x_Q,t_Q)-x_J^{-\lambda }u(x_J,t_J)&=x_Q^{-\lambda }P_{t_Q}^\lambda (f-f_Q)(x_Q)-x_J^{-\lambda}P_{t_J}^\lambda (f-f_J)(x_J)\\
&\quad +x_Q^{-\lambda}f_Q[P_{t_J}^\lambda (1)(x_Q)-1]- x_J^{-\lambda }f_J[P_{t_J}^\lambda (1)(x_J)-1]\\
&\quad +[x_Q^{-\lambda }f_Q-x_J^{-\lambda }f_J]\\
&=:\sum_{i=1}^5H_i (J) .
\end{align*}
Let $J\in G_{k+1}$, $J\subset Q$. According to \eqref{3.6.1} and since $x_J\leq 2x_Q$ we obtain
$$
x_J^\lambda |H_1(J)+H_2(J)|\leq C\Big[\Big(\frac{x_J}{x_Q}\Big)^\lambda +1\Big]\|f\|_{BMO_{\rm o}(\mathbb{R})}\leq C\|f\|_{BMO_{\rm o}(\mathbb{R})}.
$$
By using \eqref{dif1} and \eqref{dif2} and proceeding as in \eqref{J3} and in \eqref{J3b} we also get 
$$
x_J^\lambda |H_3(J)+H_4(J)|\leq C(|f_Q||P_{t_Q}^\lambda (1)(x_Q)-1|+|f_J||P_{t_J}^\lambda (1)(x_J)-1|)\leq C\|f\|_{BMO_{\rm o}(\mathbb{R})}.
$$

We now study $H_5$. We can write
\begin{align*}
x_J^\lambda H_5&\leq \Big|\Big[\Big(\frac{x_J}{x_Q}\Big)^{\lambda }-1\Big]f_Q\Big|+|f_Q-f_J|.
\end{align*}

If $x_Q\leq t_Q$, then, as in \eqref{J3},  
$$
\Big|\Big[\Big(\frac{x_J}{x_Q}\Big)^{\lambda }-1\Big]f_Q\Big|\leq C|f_Q|\leq C\|f\|_{BMO_{\rm o}(\mathbb{R})}.
$$

In the case that $x_Q>t_Q$, since $J\subset Q$, it follows that $x_J/x_Q\subset (1/2,3/2)$, and then, by applying the mean value theorem we get
$$
\Big|\Big[\Big(\frac{x_J}{x_Q}\Big)^{\lambda }-1\Big]f_Q\Big|\leq C\frac{|x_Q-x_J|}{x_Q}|f_Q|\leq C\frac{t_Q}{x_Q}|f_Q|\leq C\frac{x_Q+t_Q}{x_Q}\|f\|_{BMO_{\rm o}(\mathbb{R})}\leq C\|f\|_{BMO_{\rm o}(\mathbb{R})}.
$$

By combining the above estimates we conclude that 
\begin{align*}
\sum_{J\subset Q, J\in G_{k+1}}|J|&
\leq \frac{C}{A}\sum_{J\subset Q, J\in G_{k+1}}|J|(\|f\|_{BMO_{\rm o}(\mathbb{R})}+|f_Q-f_J|)\\
&\leq \frac{C}{A}\Big(|Q|\|f\|_{BMO_{\rm o}(\mathbb{R})}+\sum_{J\subset Q, J\in G_{k+1}}\int_J|f(y)-f_Q|dy\Big)\\
&\leq \frac{C}{A}\Big(|Q|\|f\|_{BMO_{\rm o}(\mathbb{R})}+\int_Q|f(y)-f_Q|dy\Big)\leq \frac{C}{A}|Q|\|f\|_{BMO_{\rm o}(\mathbb{R})},
\end{align*}
and \eqref{3.2} is established. 

By choosing $A:=2C(1+\|f\|_{BMO_{\rm o}(\mathbb{R})})$ we obtain
\begin{equation}\label{mitad}
\sum_{J\subset Q, \, J\in G_{k+1}}|J|\leq \frac{|Q|}{2},
\end{equation}
 for every $Q\in G_k$, $k\in \mathbb{N}$.
 
Another helpful property is the following. Let $0 \leq a < b < \infty$ and $0<c<d<\infty$. For every $\alpha \in \mathbb{R}$, we have that
\begin{align}\label{A.5}
2\int_c^d\int_a^bt\nabla_{\lambda,y}(P_t^\lambda(x,y))\cdot \nabla_{\lambda,y}(P_t^\lambda(f)(y)-\alpha y^\lambda)dydt&  \nonumber \\
&\hspace{-7cm} =\int_a^b \Big[t\partial_t(P_t^\lambda(x,y))(P_t^\lambda(f)(y)-\alpha y^\lambda )+tP_t^\lambda(x,y)\partial_t(P_t^\lambda(f)(y))  \nonumber\\
& \hspace{-7cm} \quad \quad -  P_t^\lambda(x,y)(P_t^\lambda(f)(y)-\alpha y^\lambda)\Big]_{t=c}^{t=d} dy\nonumber\\
& \hspace{-7cm}\quad + \int_c^dt\Big[P_t^\lambda(x,y)D_{\lambda ,y}(P_t^\lambda(f)(y))+D_{\lambda ,y}(P_t^\lambda(x,y)) (P_t^\lambda(f)(y)-\alpha y^\lambda)\Big]_{y=a}^{y=b} dt\nonumber\\
&\hspace{-7cm} :=\int_a^b \Big[H_\alpha (x,y,t)\Big]_{t=c}^{t=d}dy+\int_c^d\Big[V_\alpha (x,y,t)\Big]_{y=a}^{y=b}dt,\quad x\in (0,\infty ). 
\end{align}

Indeed, by integrating by parts we get, for all $x \in (0,\infty)$,
\begin{align*}
& \int_{c}^{d} \int_a^b
t D_{\lambda,y} (P_t^\lambda (x,y))D_{\lambda,y} (P_t^\lambda(f)(y))dy \,dt \\
& \qquad \qquad = \frac{1}{2} \Big\{ 
\int_{c}^{d} 
\Big[t  P_t^\lambda (x,y) \, 
D_{\lambda,y} (P_t^\lambda(f)(y))
+ t D_{\lambda,y} (P_t^\lambda (x,y)) \, 
P_t^\lambda(f)(y) \Big]_{y=a}^{y=b} \, dt \\
& \qquad \qquad \qquad + \int_{c}^{d} \int_a^b t  \Big(
P_t^\lambda (x,y) \, B_{\lambda,y}  (P_t^\lambda(f)(y)) 
+  B_{\lambda,y}(P_t^\lambda (x,y)) \,   P_t^\lambda(f)(y) \Big) \, dy \, dt
\Big\}.
\end{align*}
Also we have that
\begin{align*}
& \int_{c}^{d} \int_a^b
t \partial_t (P_t^\lambda (x,y)) \, 
\partial_t (P_t^\lambda(f)(y)) \, dy \, dt \\
& \qquad = \frac{1}{2} 
\int_{c}^{d} \int_a^b t \Big\{ 
\partial_t^2 [P_t^\lambda (x,y) P_t^\lambda(f)(y)]
- \partial_t^2 (P_t^\lambda (x,y)) P_t^\lambda(f)(y) \\
& \qquad \qquad  - P_t^\lambda (x,y) \partial_t^2  (P_t^\lambda(f)(y))
\Big\} \, dy \, dt, \quad x \in (0,\infty).
\end{align*}

By \cite[Lemma 2.2 and (2.12)]{BSt} 
$$(\partial_t^2-B_{\lambda,y})P_t^\lambda(f)(y)=0
\quad \text{and} \quad 
(\partial_t^2-B_{\lambda,y})P_t^\lambda(x,y)=0,
\quad t,x,y\in (0,\infty),$$ 
we obtain, for $x\in (0,\infty)$,
\begin{align*}
& \int_c^d\int_a^bt\nabla_{\lambda,y}(P_t^\lambda(x,y))\cdot \nabla_{\lambda,y}(P_t^\lambda(f)(y))dydt \\
& \qquad =\frac{1}{2}\Big( \int_a^b\int_c^d t\partial_t ^2[P_t^\lambda(x,y)P_t^\lambda(f)(y)]dtdy   \\
& \qquad \qquad+   \int_c^d\left[tP_t^\lambda(x,y)D_{\lambda,y} (P_t^\lambda(f)(y))+tD_{\lambda,y}(P_t^\lambda(x,y)) P_t^\lambda(f)(y)\right]_{y=a}^{y=b} dt\Big) \\
& \qquad =\frac{1}{2}\Big( \int_a^b\left[ t\partial_t[P_t^\lambda(x,y)P_t^\lambda(f)(y)]- P_t^\lambda(x,y)P_t^\lambda(f)(y)\right]_{t=c}^{t=d}dy  \\
& \qquad \qquad+   \int_c^d\left[tP_t^\lambda(x,y)D_{\lambda,y} (P_t^\lambda(f)(y))+tD_{\lambda,y}(P_t^\lambda(x,y)) P_t^\lambda(f)(y)\right]_{y=a}^{y=b} dt\Big) \\
& \qquad =\frac{1}{2}\Big( \int_a^b\left[ t\partial_t(P_t^\lambda(x,y))P_t^\lambda(f)(y)+tP_t^\lambda(x,y)\partial_t(P_t^\lambda(f)(y))- P_t^\lambda(x,y)P_t^\lambda(f)(y)\right]_{t=c}^{t=d}dy  \\
& \qquad \qquad+   \int_c^d\left[tP_t^\lambda(x,y)D_{\lambda,y} (P_t^\lambda(f)(y))+tD_{\lambda,y}(P_t^\lambda(x,y)) P_t^\lambda(f)(y)\right]_{y=a}^{y=b} dt\Big).
\end{align*}
Then, by taking into account that $\nabla_{\lambda ,y}(\alpha y^\lambda)=(0,0)$, for every $\alpha\in\mathbb R$, we obtain  \eqref{A.5}.

In order to prove Theorem \ref{Th1.3}, ({\it {ii}}), by using (\ref{A.2}), we can write
\begin{equation*}
f(x)
= 2 \lim_{n \to \infty} \int_{2^{-n}}^{2^{n}} \int_0^\infty t \nabla_{\lambda,y} (P_t^\lambda (x,y)) \cdot \nabla_ {\lambda,y}(P_t^\lambda (f)(y)) \, dy \, dt, \quad
\text{in } L^2(0,\infty).
\end{equation*}

For every $n \in \N$ we define the sets
\begin{align*}
U_n&=[0,2)\times (2^{-n}, 2),\\
W_n&=[0,\infty) \times (2^{-n},2^n)\setminus U_n,
\end{align*}
and, for every $k \in \N$ and $Q \in G_k$, $\Sigma_{Q,n}:= \Sigma_Q \cap U_n.$
Note that if $k,n\in \mathbb{N}$ and $k >n$, then $\Sigma_{Q,n} = \emptyset$, for each $Q \in G_k$.

\begin{figure}[h!]
\hspace{-6cm} 
\begin{minipage}{5cm}
 \begin{tikzpicture}[scale=1.5]
                \draw[fill=gray](0,0.25) rectangle (1.5,1.5); 
	     \node at (0.75,0.9){$U_n$};
	     \draw[fill=lightgray, lightgray](0,1.5) rectangle (1.5,2.3);
 	     \draw[fill=lightgray, lightgray](1.5,0.25) rectangle (3,2.3);
	     \node at (2.1,1.3){$W_n$};

	     \draw[-] (-0.2,0) -- (3,0); 
                \draw[-] (0,-0.2) -- (0,2.5) ;

                \draw[-] (0,1.5) -- (1.5,1.5); 
                \draw[-] (-0.05,1.5) -- (0.05,1.5);
                \node at (-0.3,1.5) {$2$};
                \draw[-] (1.5,0) -- (1.5,1.5);
                \draw[-] (1.5,-0.05) -- (1.5,0.05);
                \node at (1.5,-0.3) {$2$};
                
                \draw[dashed, ultra thick] (0,2.3) -- (3,2.3); 
                \draw[-] (-0.05,2.3) -- (0.05,2.3);
                \node at (-0.3,2.3) {$2^{n}$};
                                
                \draw[dashed, ultra thick] (0,0.25) -- (3,0.25); 
                \draw[-] (-0.05,0.25) -- (0.05,0.25);
                \node at (-0.3,0.25) {$2^{-n}$};

 \end{tikzpicture}\end{minipage}

\vspace{-4.2cm} \hspace{8cm}
    \begin{minipage}{5cm}
            \begin{tikzpicture}[scale=1.5]
                	\draw[-] (-0.6,0) -- (2.5,0);    

 		 \draw[<->] (2.2,0) -- (2.2,2);      
		\node at (2.5,1) {$|Q|$};
		\node at (1,-0.3) {$Q$};

		\path[fill=lightgray](0,0.5)--(0.5,0.5)--(0.5,0.25)--(1.5,0.25)--(2,0.25)--(2,2)--(0,2)--(0,0.5);
		\draw[fill=gray](0,0) rectangle (0.5,0.5);
		\draw[fill=gray](0.75,0) rectangle (1,0.25);
		\draw[fill=gray](1.125,0) rectangle (1.25,0.125);
		\draw[fill=gray](1.5,0) rectangle (1.625,0.125);
		\node at (1,1.2){$\Sigma _{Q,n}$};	
		\draw[-] (0,0) -- (0,2) ; 
        \draw[dashed] (0,2) -- (2,2);
        \draw[dashed] (2,0) -- (2,2);
		\draw[dashed] (-0.4,0.25) -- (2.5,0.25);
  	          \draw[dashed] (-0.45,0.25) -- (-0.35,0.25); 
	          \node at (-0.7,0.25) {$2^{-n}$}; 
 \end{tikzpicture}
    \end{minipage}
\label{fig:regions}
\end{figure}
%
                
                
                                
                
                
                

We obtain, for each $n\in \mathbb{N}$,
\begin{align*}
2\int_{2^{-n}}^{2^{n}} \int_0^\infty t \nabla_{\lambda,y} (P_t^\lambda (x,y)) \, \cdot \, \nabla_{\lambda,y} (P_t^\lambda (f)(y)) \, dy \, dt &\\
&\hspace{-4cm} =2\Big(\int_{U_n}+\int_{W_n}\Big)t \nabla_{\lambda,y} (P_t^\lambda (x,y)) \, \cdot \, \nabla_{\lambda,y} (P_t^\lambda (f)(y)) \, dy \, dt\\
& \hspace{-4cm}=2 \Big(\int_{\bigcup_{Q\in \cup_{k=0}^n G_k}\Sigma_{Q,n}}
+ \int_{W_n} \Big) t\nabla_{\lambda,y} (P_t^\lambda (x,y)) \, \cdot \, \nabla_{\lambda,y} (P_t^\lambda (f)(y)) \, dy \, dt \\
& \hspace{-4cm}=: G_{1,n}(x) + G_{2,n}(x), \quad x \in (0,\infty).
\end{align*}

Our objective is to establish that, there exists an increasing sequence $\{n_i\}_{i\in \mathbb{N}}$ of nonnegative integers such that
\begin{equation}\label{O1}
\lim_{i\rightarrow \infty}G_{1,n_i}(x)=g_1(x)+S_{\sigma _1, P^\lambda }(x),\quad \mbox{ a.e. }x\in (0,\infty),
\end{equation}
and
\begin{equation}\label{O2}
\lim_{i\rightarrow \infty}G_{2,n_i}(x)=g_2(x)+S_{\sigma _2, P^\lambda }(x),\quad \mbox{ a.e. }x\in (0,\infty),
\end{equation}
for certain $g_1$ and $g_2\in L^\infty(0,\infty)$ and $\sigma _1$ and $\sigma _2$ Carleson measures on $(0,\infty )\times (0,\infty )$ such that
$$
\|g_1\|_\infty +\|g_2\|_\infty +\|\sigma _1\|_\mathcal{C}+\|\sigma _2\|_\mathcal{C}\leq C(A+\|f\|_{BMO_{\rm o}(\mathbb{R})}),
$$
and thus we can conclude our result. 

Let $n\in \mathbb{N}$. First we deal with the function $G_{1,n}$.  We can write
$$
G_{1,n}(x)=\sum_{Q\in \bigcup_{k\in \mathbb{N}} G_k}2\int_{\Sigma _{Q,n}} t\nabla_{\lambda,y} (P_t^\lambda (x,y)) \, \cdot \, \nabla_{\lambda,y} [P_t^\lambda (f)(y)-c_Qy^\lambda] \, dy \, dt,
$$
where $c_Q:=y^{-\lambda} P_t^\lambda(f)(y)_{|_{(y,t)=(x_Q,t_Q)}}$.

Let $k\in \mathbb{N}$ and $Q\in G_k$. By taking into account (\ref{A.5}) it follows that the integral
$$
\int_{\Sigma_{Q,n}}t\nabla_{\lambda,y}(P_t^\lambda(x,y))\cdot\nabla_{\lambda,y}[P_t^\lambda(f)(y)-c_Qy^\lambda]dydt
$$
reduces to an integral over the boundary $\partial\Sigma_{Q,n}$ of $\Sigma_{Q,n}$.

We decompose this boundary in vertical and horizontal segments as follows. Let us denote by ${\mathcal V}_{Q,n}$ the set of vertical segments in $\partial\Sigma_{Q,n}\cap ([0,2]\times [2^{-n},2])$, by ${\mathcal H}_{Q,n}$ the set constituted by all horizontal segments in $\partial\Sigma_{Q,n}\cap ([0,2]\times (2^{-n},2])$ and those ones in $\partial\Sigma_{Q,n}\cap ([0,2]\times \{2^{-n}\})$ which belong to the boundary of some $Q'\subset Q$, $Q'\in G_{k+1}$ with $|Q'|=2^{-n}$ and finally we consider ${\mathcal H}_{Q,n}^0$ the set of all horizontal segments in $\partial\Sigma_{Q,n}\cap ([0,2]\times \{2^{-n}\})$ that are not in ${\mathcal H}_{Q,n}$.   

Indeed we can write
\begin{align*}
{\mathcal H}_{Q,n}&=\bigcup_{I\in \mathbb{I}_{Q,n}}(I\times \{|I|\}),\\
{\mathcal H}_{Q,n}^0&=\bigcup_{J\in \mathbb{I}_{Q,n}^0}(J\times \{2^{-n}\}),\\
{\mathcal V}_{Q,n}&=\bigcup_{K\in \mathbb{K}_{Q,n}}(\{a_K\}\times K),
\end{align*}
where, when $k\leq n$, $\mathbb{I}_{Q,n}$ is the set constituted by $Q$ and all intervals $I\subset Q$, $I\in G_{k+1}$ with $|I|\geq 2^{-n}$, $\mathbb{I}_{Q,n}^0$ contains the maximal dyadic intervals $J\subset Q\setminus \{I\subset Q, I\in G_{k+1},|I|\geq 2^{-n}\}$, $\mathbb{K}_{Q,n}$ is a finite set of dyadic intervals in $[2^{-n},2]$, and $a_K\in [0,2]$, for every $K\in \mathbb{K}_{Q,n}$. When $k>n$, we consider $\mathbb{I}_{Q,n}=\mathbb{I}_{Q,n}^0=\mathbb{K}_{Q,n}=\emptyset$.

According to \eqref{A.5} we have that
\begin{align}\label{G1}
 G_{1,n}(x)&=-\sum_{Q\in \bigcup_{k\in \mathbb{N}} G_k}\sum_{J\in \mathbb{I}_{Q,n}^0}\int_JH_{c_Q}(x,y,t)_{|t=2^{-n}}dy\nonumber\\
&\quad  +\sum_{Q\in \bigcup_{k\in \mathbb{N}}G_k}\left(\sum_{I\in \mathbb I_{Q,n}}\varepsilon_I\int_I H_{c_Q}(x,y,t)_{|t=|I|}dy +\sum_{K\in \mathbb K_{Q,n}}\varepsilon _K \int_KV_{c_Q}(x,y,t)_{|y=a_K}dt\right)\nonumber\\
& :=\mathfrak{g}_{1,n}(x)+\mathfrak{g}_{2,n}(x),\quad x\in (0,\infty ).
\end{align}
Here $\varepsilon_J=\pm 1$, $J\in \mathbb{I}_{Q,n}\cup\mathbb{K}_{Q,n}$. 

Next we show that
\begin{equation}\label{A.6}
\lim_{n\rightarrow \infty}\mathfrak{g}_{1,n}(x)=\sum_{Q\in\cup_{k\in\mathbb N}G_k}(f(x)-c_Qx^\lambda)\chi_{\partial\Sigma_Q\cap([0,2]\times\{0\})}(x) =:g_1(x),
\end{equation}
in $L^2(0,\infty )$.

We can write, for every $n\in \mathbb{N}$,
\begin{align*}
\mathfrak{g}_{1,n}(x)&=- \int_0^\infty\sum_{Q\in \cup_{\in \mathbb{N}}G_k}\chi_{\mathbb{I}_{Q,n}^0}(y)H_{c_Q}(x,y,t)_{|t=2^{-n}}dy\\
&=\int_0^\infty \Big(P_t^\lambda (x,y)\sum_{Q\in \cup_{k\in \mathbb{N}}G_k}\chi_{\mathbb{I}_{Q,n}^0}(y)[P_t^\lambda (f)(y)-c_Qy^\lambda ]\Big)_{|t=2^{-n}}dy\\
&\quad -\int_0^\infty \Big(t\partial _tP_t^\lambda (x,y)\sum_{Q\in \cup_{k\in \mathbb{N}}G_k}\chi_{\mathbb{I}_{Q,n}^0}(y)[P_t^\lambda (f)(y)-c_Qy^\lambda ]\Big)_{|t=2^{-n}}dy\\
&\quad -\int_0^\infty \Big(P_t^\lambda (x,y)\sum_{Q\in \cup_{k\in \mathbb{N}}G_k}\chi_{\mathbb{I}_{Q,n}^0}(y)t\partial _t(P_t^\lambda (f)(y))\Big)_{|t=2^{-n}}dy,\quad x\in (0,\infty ).
\end{align*}

According to (\ref{3.1}), for $k\in\mathbb N$ and $Q\in G_k,$
\begin{equation}\label{A.7*}
|P_t^\lambda(f)(y)-c_Qy^\lambda |=y^\lambda |y^{-\lambda}u(y,t)-x_Q^{-\lambda }u(x_Q,t_Q)|\leq C(A+\|f\|_{BMO_{\rm o}(\mathbb R)}),\quad(y,t)\in\Sigma_Q.
\end{equation}

By using (\ref{A.4.1}), since $f\in L^2(0,\infty)$,
$$\lim_{t\rightarrow 0^+}P_t^\lambda(f)(y)=f(y),\;\;\;\mbox{a.e.}\;\;y\in (0,\infty).$$

Then it follows that
\begin{equation}\label{A.7}
|f(y)-c_Qy^\lambda |\leq C(A+\|f\|_{BMO_{\rm o}(\mathbb R)}),\;\;\;\mbox{a.e.}\;\;y\in \partial\Sigma_Q\cap([0,2]\times\{0\}).
\end{equation}

We observe also that
$$
 \supp \Big(\sum_{Q\in \cup_{\in \mathbb{N}}G_k}\chi_{\mathbb{I}_{Q,n}^0}(y)[P_t^\lambda (f)(y)-c_Qy^\lambda ]_{|t=2^{-n}}-g_1(y)\Big)\subset [0,2],\quad n\in\mathbb N ,
$$
and 
\begin{equation}\label{converg}
\lim_{n\rightarrow \infty}\sum_{Q\in \cup_{k\in \mathbb{N}}G_k}\chi_{\mathbb{I}_{Q,n}^0}(y)[P_t^\lambda (f)(y)-c_Qy^\lambda ]_{|t=2^{-n}}=g_1(y),\quad \mbox{ a.e. }y\in (0,\infty ),
\end{equation}
(actually, we can assure that \eqref{converg} is true for all $y\in Q_0$ which is not a dyadic number). By the dominated convergence theorem, we get \eqref{converg} in $L^2(0,\infty)$.

According to \eqref{D7} and \eqref{A.3} we have, for each $n\in \mathbb{N}$,
\begin{align*}
\mathfrak{g}_{1,n}(x)-g_1(x)&=h_\lambda \Big(e^{-tz}h_\lambda \Big(\sum_{Q\in \cup_{\in \mathbb{N}}G_k}\chi_{\mathbb{I}_{Q,n}^0}(y)[P_t^\lambda (f)(y)-c_Qy^\lambda ]-g_1(y)\Big)(z)_{|t=2^{-n}}\Big)(x)\\
&\quad +h_\lambda ((e^{-tz}-1)_{|t=2^{-n}}h_\lambda (g_1)(z))(x)
\\
&\quad +h_\lambda \Big(tze^{-tz}h_\lambda\Big(\sum_{Q\in \cup_{k\in \mathbb{N}}G_k}\chi_{\mathbb{I}_{Q,n}^0}(y)[P_t^\lambda (f)(y)-c_Qy^\lambda ]-g_1(y)\Big)(z)_{|t=2^{-n}}\Big)(x)\\
&\quad -(t\partial _tP_t^\lambda (g_1)(x))_{|t=2^{-n}}\\
&\quad -h_\lambda\Big(e^{-tz}h_\lambda \Big(\sum_{Q\in \cup_{k\in \mathbb{N}}G_k}\chi_{\mathbb{I}_{Q,n}^0}(y)t\partial _t(P_t^\lambda (f)(y))\Big)(z)_{|t=2^{-n}}\Big)(x),\quad x\in (0,\infty ).
\end{align*}
Then, by taking into account the $L^2$-boundedness of $h_\lambda$ we get
\begin{align*}
\|\mathfrak{g}_{1,n}-g_1\|_2&\leq C\Big(\Big\|\sum_{Q\in \cup_{k\in \mathbb{N}}G_k}\chi_{\mathbb{I}_{Q,n}^0}(y)[P_t^\lambda (f)(y)-c_Qy^\lambda ]_{|t=2^{-n}}-g_1(y) \Big\|_2\\
&\quad +\|(e^{-tz}-1)_{|t=2^{-n}}h_\lambda (g_1)\|_2+\|(t\partial _tP_t^\lambda (g_1))_{|t=2^{-n}}\|_2\\
&\quad +\|(t\partial_t P_t^\lambda (f))_{t=2^{-n}}\|_2\Big),\quad n\in \mathbb{N}.
\end{align*}
From \eqref{D8}, \eqref{converg} and the dominated convergence theorem we obtain \eqref{A.6}. Note that by \eqref{A.7} we have that $\|g_1\|_{\infty}\leq C(A+\|f\|_{BMO_{\rm o}(\mathbb R)})$.

Now, let us show that there exists a Carleson measure $\sigma_1$ on $(0,\infty )^2$ such that 
\begin{equation}\label{sigma1}
\lim_{n\rightarrow \infty}\mathfrak{g}_{2,n}(x)=S_{\sigma_1,P^\lambda}(x),\quad x\in (0,\infty ).
\end{equation}

For that, we write, for each $n\in \mathbb{N}$,
\begin{align*}
\mathfrak{g}_{2,n}(x)&=\sum_{Q\in \bigcup_{k\in \mathbb{N}}G_k}\left(\sum_{I\in \mathcal{H}_{Q,n}}\int_I P_t^\lambda(x,y)M_{Q,1}(y,t)dy_I +\sum_{J\in \mathcal{V}_{Q,n}}\int_JP_t^\lambda (x,y)L_{Q,1}(y,t)dt_J\right)\\
&\quad +\sum_{Q\in \bigcup_{k\in \mathbb{N}}G_k}\left(\sum_{I\in \mathcal{H}_{Q,n}}\int_I M_{Q,2}(x,y,t)dy_I +\sum_{J\in \mathcal{V}_{Q,n}}\int_JL_{Q,2}(x,y,t)dt_J\right)\\
&:=F_{1,n}(x)+F_{2,n}(x),\quad x\in (0,\infty ),
\end{align*}
where, for every $Q\in \cup_{k\in \mathbb{N}}G_k$,
\begin{align}\label{M}
&M_{Q,1}(y,t)
:=t\partial_tP_t^\lambda(f)(y)-[P_t^\lambda(f)(y)-c_Qy^\lambda],\\
&L_{Q,1}(y,t)
:=tD_{\lambda,y}P_t^\lambda(f)(y),\nonumber\\
&M_{Q,2}(x,y,t)
:=t\partial_t(P_t^\lambda(x,y))[P_t^\lambda(f)(y)-c_Qy^\lambda],\nonumber
\end{align}
and
$$L_{Q,2}(x,y,t)
:=tD_{\lambda,y}P_t^\lambda(x,y)[P_t^\lambda(f)(y)-c_Qy^\lambda].$$
Let us consider
$$
{\mathcal H}_{Q}
:=\{\mbox{horizontal segments in } \partial\Sigma_{Q}\cap ([0,2]\times(0,2])\}, 
$$
and
$${\mathcal V}_{Q}
:=\{\mbox{vertical segments in } \partial\Sigma_{Q}\}.
$$

By \eqref{mitad} and according to \cite[p. 346]{Ga} the measures 
$$
\nu
:=\sum_{Q\in \cup_{k\in\mathbb{N}} G_k}\Big(\sum_{I\in {\mathcal H}_{Q}}dy_I+\sum_{J\in {\mathcal V}_{Q}}dt_J\Big),
$$ 
and
$$
\nu_n
:=\sum_{Q\in \cup_{k\in\mathbb{N}}G_k}\Big(\sum_{I\in {\mathcal H}_{Q,n}}dy_I+\sum_{J\in {\mathcal V}_{Q,n}}dt_J\Big),\quad n\in \mathbb{N},
$$ 
are Carleson measures. Moreover, we can write
$$
d\nu_n(y,t)=k_n(y,t)d\alpha (y,t) \mbox{ and }d\nu(y,t)=k(y,t)d\alpha (y,t),
$$
 for certain positive measure $\alpha$ and nonnegative functions $k_n$ and $k$ such that $k_n\uparrow k$, as $n\rightarrow \infty$, pointwisely. Then, $\|\nu_n\|_{\mathcal C}\leq \|\nu\|_{\mathcal C}$, $n\in \mathbb{N}$, and by the monotone convergence theorem,
$$
\lim_{n\rightarrow \infty}S_{\nu _n,P^\lambda }(x)=S_{\nu, P^\lambda}(x),\quad x\in (0,\infty ).
$$

We now define 
$$
 \mu_n(y,t):= \sum_{Q\in \cup_{k\in \mathbb{N}} G_k} \Big(\sum_{I\in{\mathcal H}_{Q,n}}M_{Q,1}(y,t) dy_I +\sum_{J\in{\mathcal V}_{Q,n}}L_{Q,1}(y,t)dt_J\Big), \quad n\in\mathbb N, 
$$
and
$$
 \mu(y,t):= \sum_{Q\in \cup_{k\in \mathbb{N}} G_k} \Big(\sum_{I\in{\mathcal H}_{Q}}M_{Q,1}(y,t) dy_I 
  +  \sum_{J\in{\mathcal V}_{Q}}L_{Q,1}(y,t)dt_J\Big).
$$

From Proposition \ref{Prop2.2}  and \eqref{A.7*} it follows that, for every $Q\in \bigcup_{k=0}^\infty G_k$,
$$
|M_{Q,1}(y,t)|\leq C(A+\|f\|_{BMO_{\rm o}(\mathbb R)}), \quad (y,t)\in \Sigma _Q,
$$
and 
$$
|L_{Q,1}(y,t)|\leq C\|f\|_{BMO_{\rm o}(\mathbb R)},\quad y,t\in (0,\infty).
$$
Then the measures $\mu$ and $\mu_n$, $n\in\mathbb N$, are Carleson measures, satisfying that
$$\|\mu_n\|_{\mathcal{C}}\leq C(A+\|f\|_{BMO_{\rm o}(\mathbb R)}),\;\;\;n\in\mathbb N,$$
$$\|\mu\|_{\mathcal{C}}\leq C(A+\|f\|_{BMO_{\rm o}(\mathbb R)}),$$
and, by Theorem \ref{Th1.3}, $(i)$ and the dominated convergence theorem
$$
\lim_{n\rightarrow\infty}S_{\mu_n, P^\lambda }(x)=S_{\mu, P^\lambda }(x),\quad x\in (0,\infty ).
$$

Note that, for every $n\in\mathbb N$, $F_{1,n}(x)=S_{\mu_n,P^\lambda}(x)$, $x\in (0,\infty )$. Then, 
$$
\lim_{n\rightarrow\infty}F_{1,n}(x)=S_{\mu,P^\lambda}(x),\quad x\in (0,\infty).
$$

Now we study  $F_{2,n}$, $n\in \mathbb{N}$.
By Lemma \ref{D4} and (\ref{D3}) we get
$$
|t\partial_tP_t^\lambda(x,y)|+|tD_{\lambda,y}P_t^\lambda(x,y)|\leq C\frac{t}{(x-y)^2+t^2},\quad x,y,t\in (0,\infty).
$$

Then, by taking into account (\ref{A.7*}) and proceeding as in \cite[p. 25 and 26]{Wi} (see also \cite[p. 2088]{CDLSY}), we get, for every $n\in\mathbb N$, a Carleson measure $\rho_n$ such that $F_{2,n}=S_{\rho_n,P^\lambda}$, and a Carleson measure $\rho$, such that
$$
\sum_{Q\in\bigcup_{k\in \mathbb{N}}G_k}\left(\sum_{I\in \mathcal{H}_Q}\int_I M_{Q,2}(x,y,t)dy_I +\sum_{J\in \mathcal{V}_Q}\int_JL_{Q,2}(x,y,t)dt_J\right)=S_{\rho ,P^\lambda}(x),\quad x\in (0,\infty ).
$$
We have that,
$$
\lim_{n\rightarrow\infty}S_{\rho_n,P^\lambda}(x)=S_{\rho,P^\lambda}(x),\quad x\in (0,\infty ),
$$
and $\|\rho \|_{\mathcal{C}}\leq C(A+\|f\|_{BMO_{\rm o}(\mathbb{R})})$. Then we obtain \eqref{sigma1} for $\sigma_1=\mu+\rho$. From \eqref{G1}, \eqref{A.6} and \eqref{sigma1} we can find an increasing sequence $\{n_i\}_{i\in \mathbb{N}}$ of nonnegative integers such that
$$
\lim_{i\rightarrow \infty}G_{1,n_i}(x)=g_1(x)+S_{\mu +\rho,P^\lambda }(x),\quad \mbox{ a.e. }x\in (0,\infty ).
$$
Note that $\|g_1\|_\infty +\|\sigma _1\|_{\mathcal{C}}\leq C(A+\|f\|_{BMO_{\rm o}(\mathbb{R})})$, so \eqref{O1} is thus established. 

We now deal with $G_{2,n}$, $n\in \mathbb{N}$. Let $M>2$. We define
$$
W_{n,M}:=\Big\{(x,t)\in W_n: x\in (0,M)\Big\}, \quad n\in\mathbb N.
$$

By (\ref{A.5}) we have that
\begin{align*}
 G_{2,n,M}(x) 
 & :=2\int_{W_{n,M}}t\nabla_{\lambda,y}(P_t^\lambda(x,y))\cdot\nabla_{\lambda,y}[P_t^\lambda(f)(y)-c_0y^\lambda]dydt \\
&=\int_0^MH_{c_0}(x,y,t)_{|t=2^n}dy-\int_2^MH_{c_0}(x,y,t)_{|t=2^{-n}} dy-\int_0^2 H_{c_0}(x,y,t)_{|t=2}dy\\
& \quad  -\int_2^{2^n}V_{c_0}(x,y,t)_{|y=0} dt  -\int_{2^{-n}}^2V_{c_0}(x,y,t)_{|y=2} dt+\int_{2^{-n}}^{2^n}V_{c_0}(x,y,t)_{|y=M} dt \\
& =:\sum_{i=1}^6I_{i,n,M}(x),\;\;\;x\in (0,\infty)\;\mbox{and}\;n\in\mathbb N,
\end{align*}
where $c_0:=x_{Q_0}^{-\lambda}u(x_{Q_0},t_{Q_0})$. Observe that, actually $I_{3,n,M}$ is independent of $n$ and $M$ and $I_{4,n,M}$ and $I_{5,n,M}$ do not depend on $M$.

First, we note that $I_{4,n,M}(x)=0$, $n\in \mathbb{N}$. Indeed, by Lemma \ref{D4} and \eqref{D2} it follows that
$$
\lim_{y\rightarrow 0^+}P_t^\lambda(x,y)=\lim_{y\rightarrow 0^+}D_{\lambda ,y}P_t^\lambda (x,y)=0,\quad x,t\in (0,\infty ).
$$
Then, by taking into account Proposition \ref{Prop2.2} and \eqref{A.7*} for $Q=Q_0$ it follows that $V_{c_0}(x,y,t)_{|y=0}=0$, $x,t\in (0,\infty )$.

On the other hand, we have that
\begin{align*}
I_{3,n,M}(x)
& =  -\int_0^2( P_t^\lambda(x,y)M_{Q_0,1}(y,t))_{|t=2} dy -\int_0^2M_{Q_0,2}(y,t)_{|t=2} dy\\
&:=I_3^1(x)+I_3^2(x),\quad x\in (0,\infty ).
\end{align*}
Here $M_{Q_0,1}$ and $M_{Q_0,2}$ are as in \eqref{M} with $Q=Q_0$. By considering Proposition \ref{Prop2.2} and \eqref{A.7*} we get a Carleson measure $\alpha_3^1$ such that $I_3^1(x)=S_{\alpha _3^1,P^\lambda }(x)$, $x\in (0,\infty )$ and  $\|\alpha_3^1\|_{\mathcal C}\leq C(A+\|f\|_{BMO_{\rm o}(\mathbb R)})$. Also, from Lemma \ref{D4}, \eqref{D3} and \eqref{A.7*}
we deduce as above (see \cite[p. 25 and 26]{Wi}) that $I_3^{2}(x)=S_{\alpha_3^2,P^\lambda}(x)$, $x\in (0,\infty )$, for some Carleson measure $\alpha_3^2$ such that $\|\alpha_3^2\|_{\mathcal C}\leq C(A+\|f\|_{BMO_{\rm o}(\mathbb R)})$. Then 
 there exists a Carleson measure $\alpha_3=\alpha_3^1+\alpha_3^2$ on $(0,\infty )^2$ such that $I_{3,n,M}(x)=S_{\alpha_3,P^\lambda }(x)$, $x\in (0,\infty )$. 
 
In a similar way we can see that, for every $n\in \mathbb{N}$, there exists a Carleson measure $\alpha_{5,n}$ such that $I_{5,n,M}=S_{\alpha_{5,n},P^\lambda}$ and $\|\alpha_{5,n}\|_{\mathcal C}\leq C(A+\|f\|_{BMO_{\rm o}(\mathbb R)})$. And, as above, by Theorem \ref{Th1.3} and the dominated convergence theorem, there exists $\alpha_5\in \mathcal C$ such that $S_{\alpha_{5,n},P^\lambda}(x)\rightarrow S_{\alpha_5,P^\lambda}(x)$, as $n\rightarrow\infty$, for a.e. $x\in (0,\infty)$.

We have also that, for every $n\in \mathbb{N}$,
$$
\lim_{M\rightarrow \infty}I_{6,n,M}(x)=0,\quad x\in (0,\infty ).
$$

It is sufficient to note that by Lemma \ref{D4} and \eqref{D2} it follows that
\begin{align*}
 |I_{6,n,M}(x)| 
 & \leq  C M^\lambda\int_{2^{-n}}^{2^n}\frac{t(xM)^\lambda }{((x-M)^2+t^2)^{\lambda  +1}}\Big(1+\frac{1}{t^{2\lambda +2}}\int_0^1|f(z)|dz\Big)dt\\
 &\leq C\frac{x^\lambda M^{2\lambda }}{(|x-M|+2^{-n})^{2\lambda +1}}\int_{2^{-n}}^{2^n}\Big(1+\frac{1}{t^{2\lambda +2}}\Big)dt\leq \frac{C_{n,x}M^{2\lambda }}{(|x-M|+2^{-n})^{2\lambda +1}},\quad x\in (0,\infty),
\end{align*}
for every $n\in \mathbb{N}$ and for certain $C_{n,x}>0$.

On the other hand, for each $n\in \mathbb{N}$,
\begin{equation}\label{I_1}
\lim_{M\rightarrow \infty}I_{1,n,M}(x)=\int_0^\infty H_{c_0}(x,y,t)_{|t=2^n}dy,\quad x\in (0,\infty ),
\end{equation}
and 
\begin{equation}\label{I_2}
\lim_{M\rightarrow \infty}I_{2,n,M}(x)=\int_2^\infty H_{c_0}(x,y,t)_{|t=2^{-n}}dy,\quad x\in (0,\infty ).
\end{equation}

It is sufficient to show that for each $n\in \mathbb{N}$, the integrals in the right side of \eqref{I_1} and \eqref{I_2} are absolutely convergent for every $x\in (0,\infty)$. Indeed, from Lemma \ref{D4} and (\ref{D2}) and Proposition \ref{Prop2.2}, we get
$$
|H_{c_0}(x,y,t)|\leq C\frac{t(xy)^\lambda }{((x-y)^2+t^2)^{\lambda +1}}(|P_t^\lambda (f)(y)|+y^\lambda +1),\quad x,y,t\in (0,\infty ).
$$

Since $f\in L^2(0,\infty)$, also $P_t^\lambda(f)\in L^2(0,\infty)$ and, by H\"older's inequality we deduce that
\begin{align*}
\int_0^\infty |H_{c_0}(x,y,t)|dy&\leq Ctx^\lambda \int_0^\infty\frac{y^\lambda}{((x-y)^2+t^2)^{\lambda +1}}(|P_t^\lambda(f)(y)|+y^\lambda +1)dy   \\
& \leq Ctx^\lambda\Big\{\Big(\int_0^\infty\frac{y^{2\lambda}}{((x-y)^2+t^2)^{2\lambda +2}}dy\Big)^{1/2}\|P_t^\lambda(f)\|_2\\
& \qquad  \qquad + \int_0^\infty \frac{y^\lambda+y^{2\lambda}}{((x-y)^2+t)^{\lambda +1}}dy\Big\}<\infty,\;\;x,t\in (0,\infty).
\end{align*}

We conclude that, for every $n\in \mathbb{N}$, 
\begin{align*}
G_{2,n}(x) &= \int_0^\infty H_{c_0}(x,y,t)_{|t=2^n}dy-\int_2^\infty H_{c_0}(x,y,t)_{|t=2^{-n}}dy\\
&\quad +S_{\alpha _3,P^\lambda}(x)+S_{\alpha_{5,n},P^\lambda}(x),\quad x\in (0,\infty).
\end{align*}

In order to finish the proof we are going to show that
\begin{equation}\label{H1}
\lim_{n\rightarrow \infty}\int_0^\infty H_{c_0}(x,y,t)_{|t=2^n}dy=c_0x^\lambda ,\quad x\in (0,\infty ),
\end{equation}
and
\begin{equation}\label{H2}
\lim_{i\rightarrow \infty}\int_2^\infty H_{c_0}(x,y,t)_{|t=2^{-n_i}}=c_0x^\lambda \chi _{(2,\infty)}(x),\quad \mbox{ a.e. }x\in (0,\infty ),
\end{equation}
for certain increasing sequence $\{n_i\}_{i\in \mathbb{N}}$ of nonnegative integers. Thus we obtain that
$$
\lim_{i\rightarrow \infty }G_{2,n_i}(x)=c_0x^\lambda \chi _{(0,2)}(x)+S_{\alpha_3+\alpha_5,P^\lambda}(x),\quad \mbox{ a.e. }x\in (0,\infty ),
$$
and we get \eqref{O2} with $g_2(x)=c_0x^\lambda \chi _{(0,2)}(x)$, $x\in (0,\infty)$, and $\sigma_2=\alpha_3+\alpha_5$.

Since $P_t^\lambda (y^\lambda )(x)=x^\lambda$, $x,t\in (0,\infty )$ (\cite[p. 455]{BCaFR}),   
$$
\int_0^\infty \partial_t P_t^\lambda(x,y)y^\lambda dy=\partial_t\int_0^\infty P_t^\lambda(x,y)y^\lambda dy=0, \quad x,t\in (0,\infty).
$$
The derivation under the integral sign is justified because by Lemma \ref{D4} and \eqref{D2} it follows that
$$
\int_0^\infty |\partial_t P_t^\lambda(x,y)y^\lambda| dy\leq C\int_0^\infty\frac{x^\lambda y^{2\lambda}} {((x-y)^2+t^2)^{\lambda  +1}}dy <\infty,\quad x,t\in (0,\infty).
$$

Then, we can write
\begin{align*}
\int_0^\infty H_{c_0}(x,y,t)dy&=\int_0^\infty (t\partial_ t(P_t^\lambda(x,y))P_t^\lambda(f)(y)+tP_t^\lambda(x,y)\partial_tP_t^\lambda(f)(y)-P_t^\lambda(x,y)P_t^\lambda(f)(y))dy\\
&\quad +c_0x^\lambda ,\quad x,t\in (0,\infty ). 
\end{align*}

From Lemma \ref{D4} and \eqref{D3} we have that
\begin{align*}
\Big|\int_0^\infty (t\partial_ t(P_t^\lambda(x,y))P_t^\lambda(f)(y)+tP_t^\lambda(x,y)\partial_tP_t^\lambda(f)(y)-P_t^\lambda(x,y)P_t^\lambda(f)(y))dy\Big| & \\
&\hspace{-9cm} \leq C \int_0^\infty \frac{t}{(x-y)^2+t^2}\int_0^1\frac{t}{(y-z)^2+t^2}|f(z)|dzdy \\
&\hspace{-9cm} \leq \frac{C}{t}\int_0^\infty \frac{t}{(x-y)^2+t^2}\int_0^1|f(z)|dzdy\\
&\hspace{-9cm} \leq C\frac{\|f\|_{BMO_{\rm o}(\mathbb{R})}}{t}\Big(\int_0^{2x}\frac{dy}{t}+\int_{2x}^\infty \frac{t}{(y+t)^2}dy\Big)\\
&\hspace{-9cm} \leq \frac{C}{t}\Big(\frac{x}{t}+\frac{t}{x+t}\Big),\quad x, t\in (0, \infty).
\end{align*}
Note that, for each $x\in (0,\infty )$, the last term tends to zero as $t\rightarrow \infty$, and consequently we get \eqref{H1}.

On the other hand, we have that
\begin{align*}
\int_2^\infty H_{c_0}(x,y,t)dy&=\int_2^\infty (t\partial_ t(P_t^\lambda(x,y))P_t^\lambda(f)(y)+tP_t^\lambda(x,y)\partial_tP_t^\lambda(f)(y)-P_t^\lambda(x,y)P_t^\lambda(f)(y))dy\\
&\quad -c_0t\partial _t\int_2^\infty P_t^\lambda (x,y)y^\lambda dy+c_0\int_2^\infty P_t^\lambda (x,y)y^\lambda dy\\
&=\int_2^\infty (t\partial_ t(P_t^\lambda(x,y))P_t^\lambda(f)(y)+tP_t^\lambda(x,y)\partial_tP_t^\lambda(f)(y)-P_t^\lambda(x,y)P_t^\lambda(f)(y))dy\\
&\quad +c_0t\partial_tP_t^\lambda (y^\lambda \chi _{(0,2)}(y))(x)+c_0x^\lambda -c_0P_t^\lambda (y^\lambda \chi _{(0,2)}(y))(x),\quad x,t\in (0,\infty ). 
\end{align*}

As above, by using Lemma \ref{D4} and \eqref{D2} it follows that
\begin{align*}
\Big|\int_2^\infty (t\partial_ t(P_t^\lambda(x,y))P_t^\lambda(f)(y)+tP_t^\lambda(x,y)\partial_tP_t^\lambda(f)(y)-P_t^\lambda(x,y)P_t^\lambda(f)(y))dy\Big| & \\
&\hspace{-9cm} \leq C \int_2^\infty P_t^\lambda (x,y)\int_0^1\frac{t(yz)^\lambda}{((y-z)^2+t^2)^{\lambda +1}}|f(z)|dzdy \\
&\hspace{-9cm} \leq Ct\int_2^\infty P_t^\lambda (x,y)y^\lambda \int_0^1\frac{|f(z)|}{(1+t^2)^{\lambda +1}}dzdy\\
&\hspace{-9cm} \leq C\|f\|_{BMO_{\rm o}(\mathbb{R})}t\int_0^\infty P_t^\lambda (x,y)y^\lambda dy\\
&\hspace{-9cm} \leq Cx^\lambda t,\quad x, t\in (0, \infty).
\end{align*}

Observe that, for each $x\in (0,\infty )$, the last term tends to zero as $t\rightarrow 0$. By taking into account \eqref{D8} and \eqref{A.4.1} we conclude that, there exists an increasing sequence $\{n_i\}_{i\in \mathbb{N}}$ of nonnegative integers such that
$$
\lim_{i\rightarrow \infty}\int_2^\infty H_{c_0}(x,y,t)_{|t=2^{-n_i}}dy=c_0x^\lambda(1-\chi_{(0,2)}(x))=c_0x^\lambda \chi _{(2,\infty)}(x),\quad\mbox{ a.e. } x\in (0,\infty ),
$$
and \eqref{H2} is proved.
\end{proof}


\end{document}